\newtheorem{thm}{Theorem}[section]
\newtheorem{prop}[thm]{Proposition}
\newtheorem{lemma}[thm]{Lemma}
\newtheorem{cor}[thm]{Corollary}
\newcommand\proof
\par\noindent{\bf Proof:\ } }
\newcommand\qed{\hfill$\blacksquare$}
\newcommand\cox{\mathsf{cox}}
\newcommand{\wh}{\widehat}
\DeclareMathOperator{\Ad}{Ad}
\DeclareMathOperator{\Irr}{Irr}
\DeclareMathOperator{\tr}{tr}
\DeclareMathOperator{\refl}{refl}
\DeclareMathOperator{\Sym}{Sym}
\DeclareMathOperator{\triv}{triv}
\newcommand{\bn}{\mathbb{N}}
\newcommand{\bc}{\mathbb{C}}
\newcommand{\bF}{\mathbb{F}}
\newcommand{\bq}{\mathbb{Q}}
\newcommand{\bz}{\mathbb{Z}}
\newcommand{\al}{\alpha}
\newcommand{\be}{\beta}
\newcommand{\ga}{\gamma}
\newcommand{\de}{\delta}
\newcommand{\De}{\Delta}
\newcommand{\Ga}{\Gamma}
\newcommand{\lam}{\lambda}
\newcommand{\om}{\omega}
\newcommand{\ep}{\epsilon}
\newcommand{\vep}{\varepsilon}
\newcommand{\vp}{\varphi}
\newcommand{\vt}{\vartheta}
\newcommand{\scb}{\mathcal{B}}
\newcommand{\scd}{\mathcal{D}}
\newcommand{\sch}{\mathcal{H}}
\newcommand{\scp}{\mathcal{P}}
\newcommand{\sct}{\mathcal{T}}
\newcommand{\la}{\langle}
\newcommand{\ra}{\rangle}
\newcommand{\lra}{\longrightarrow}
\newcommand{\hra}{\hookrightarrow}
\DeclareMathOperator{\diag}{diag}
\DeclareMathOperator{\GL}{GL}
\DeclareMathOperator{\SL}{SL}
\DeclareMathOperator{\SO}{SO}
\DeclareMathOperator{\SU}{SU}
\DeclareMathOperator{\Reg}{Reg}
\DeclareMathOperator{\Sp}{Sp}
\DeclareMathOperator{\Spin}{Spin}
\newcommand{\fg}{\mathfrak{g}}
\newcommand{\ft}{\mathfrak{t}}
\newcommand{\gint}[2]{
\left\lfloor{\frac{#1}{#2}}\right\rfloor}
\newcommand{\lint}[2]{
\left\lceil{\frac{#1}{#2}}\right\rceil}
\newcommand{\D}[4]{
{\scriptsize\begin{matrix}
#1&#2&#3\\
&#4&
\end{matrix}} }
\newcommand{\E}[6]{
{\scriptsize\begin{matrix}
#1\!\!&\!\!#2\!\!\!&\!\!\!#3\!\!\!&\!\!\!#4\!\!&\!\!#5\\
\!\!\!&\!\!\!&#6&\!\!\!&\!\!\!
\end{matrix}} }
\begin{document}

\title {Weyl group characters afforded by zero weight spaces}

\author{Mark Reeder}
%\\
%Department of Mathematics, Boston College \\ 
%Chestnut Hill, MA  02467\\
%\texttt{reederma@bc.edu}

\maketitle
%\tableofcontents
%\date{\today}

\begin{center}
{\it In memory of Bert Kostant}
\end{center}

\begin{abstract} 
Let $G$ be a compact Lie group with Weyl group $W$. We give a formula for the character of $W$ on the zero weight space of any finite dimensional representation of $G$. The formula involves weighted partition functions, generalizing Kostant's partition function. On the elliptic set of $W$ the partition functions are trivial. On the elliptic regular set, the character formula is a monomial product of certain co-roots, up to a constant equal to $0$ or $\pm 1$. This generalizes Kostant's formula for the trace of a Coxeter element on a zero weight space. 
If the long element $w_0=-1$, our formula gives a method for determining all representations of $G$ for which the zero weight space is irreducible. \end{abstract}

\section{Introduction and statement of results} Let $G$ be a compact Lie group with maximal torus $T\subset G$. 
One of the oldest problems in representation theory is to decompose the representation of $G$
on the Hilbert space $L^2(G/T)$ of functions on $G/T$ which are square-integrable with respect to a $G$-invariant measure.  From the Peter-Weyl theorem  it follows that an irreducible  representation $V$ of $G$ appears in $L^2(G/T)$ with multiplicity equal to the dimension of the space $ V^T$ of $T$-invariant vectors in $V$. Kostant showed that $\dim V^T$ can be expressed in terms of his {\it Partition Function}, which counts the number of ways a weight can be expressed as a non-negative linear combination of positive roots.  Therefore the $G$-decomposition of $L^2(G/T)$ is known, in principle (see section \ref{explicit} below).  

It is also natural to ask for the $G$-decomposition of $L^2(\sct)$, where $\sct$ is the homogenous space of all maximal tori in $G$. This problem has not been solved, even in principle, except for $\SU_2$ (well-known, see below) and $\SU_3$ (following from known results, see section \ref{SU3}). 

More generally, on $G/T$ there is also a right action by the Weyl group $W$ commuting with the left $G$-action, so $L^2(G/T)$ is actually a representation of  $G\times W$. 
If $V\in \Irr(G)$ and $U\in\Irr(W)$ then the multiplicity of $V\boxtimes U$ in $L^2(G/T)$ equals the multiplicity of $U$ in $V^T$. For example, the multiplicity of $V$ in $L^2(\sct)$ equals the multiplicity of the trivial character of $W$ in $V^T$.

If $G=\SU_2$ then $G/T=S^2$ and the nontrivial element of $W$ acts on $S^2$ via the antipodal map. The irreducible constituents of $L^2(S^2)$ are $V_\mu\boxtimes \vep^m$ where $V_\mu$ has odd dimension $\mu=2m+1$, and $\vep$ is the nontrivial character of $W$. Also, $\sct$ is the real projective plane and 
$V_\mu$ appears in $L^2(\sct)$ with multiplicity one if $\mu\in 1+4\bz$, zero otherwise.

For larger groups, the first results on the $W$-decomposition of $V^T$ were obtained in the 1970's by Gutkin \cite{gutkin} and Kostant \cite{kostant:eta}. Much work has been done since (see section \ref{earlier}), but until now  the character of $V^T$, for general $V\in\Irr(G)$, was known on just one non-identity conjugacy class in $W$, namely the class $\cox$ consisting of the Coxeter elements. This result was also obtained by Kostant. He showed that $\tr(\cox,V^T)\in\{-1,0,1\}$ and he gave a formula for the exact value, in terms of the $W$-action on a certain finite quotient of the character lattice of $T$.

In this paper we give, for any $G$, any $V\in \Irr(G)$ and any $w\in W$ a formula for the character  $\tr(w,V^T)$, in terms of the highest weight of $V$. For those $w$ sharing certain properties with those of $\cox$, we give a direct generalization of Kostant's formula for $\tr(\cox,V^T)$ in terms of the $W$-action on other finite quotients of the character lattice of $T$.

To explain our formulas, it is useful to rank the elements of $W$ according to 
the dimension $d(w)$ of the fixed-point set of $w$ in $T$. At one extreme, the identity element $1_W$ has $d(1_W)=\dim T$ and Kostant's partition function gives a formula for  $\tr(1_W,V^T)$, in principle. At the other extreme, $d(\cox)=0$ and we have seen there is a simple formula for $\tr(\cox,V^T)$. Thus we expect $d(w)$ to measure the complexity of $\tr(w,V^T)$ as a function of the highest weight of $V$. Indeed, our formula for $\tr(w,V^T)$ interpolates between Kostant's formulas for $\tr(1_W,V^T)$ and $\tr(\cox, V^T)$ and involves a weighted partition function $\scp_w$ of rank equal to $d(w)$. 

\subsection{Some basic notation}

We assume $G$ is connected and all normal subgroups of $G$ are finite. For technical reasons we also assume $G$ is simply connected, even though any $V$ with $V^T\neq 0$ factors through the quotient of $G$ by its center.
Let $\fg$ and $\ft$ be the complexified Lie algebras of $G$ and $T$ and let 
$R$ be the set of roots of $T$ in $\fg$. Choose a set $R^+$ of positive roots in $R$, and let $\rho$ be half the sum of the roots in $R^+$. Let $P$ be the additive group indexing the characters of $T$; we write $e_\lam:T\to S^1$ for the character indexed by $\lam\in P$. Let $P_{++}$ be the set of dominant regular characters of $T$ with respect to $R^+$.
For $\mu\in P_{++}$, let  $V_\mu$ be the irreducible representation of $G$ with highest weight $\mu-\rho$. (This is consistent with the $\SU_2$ example above, and greatly simplifies our formulas.) The sign character of $W$ is denoted by $\vep$.

\subsection{Elliptic traces on zero-weight spaces} \label{intro:elltrace} 

When $d(w)=0$ we say  $w$ is {\it elliptic}. The character of $V^T$ on the elliptic set in $W$ has  intrinsic meaning: It determines the virtual character of $V^T$ modulo linear combinations of induced representations from proper parabolic subgroups of $W$ \cite{reeder:elliptic}. Coxeter elements are elliptic; in $\SU_n$ there are no others. For all $G\not\simeq \SU_n$ there are non-Coxeter elliptic elements.  For example $W(E_8)$ has 30 elliptic conjugacy classes. 

Assume $w\in W$ is elliptic.
We regard $w$ as a coset of $T$ in the normalizer $N_G(T)$. 
Because $w$ is elliptic, the elements of $w$ are contained in a single conjugacy class $w^G$ in $G$ and we have
\begin{equation}\label{wcfelliptic}
\tr(w,V_\mu^T)=\tr(t,V_\mu),
\end{equation}
for any element $t\in T\cap w^G$. 
Therefore $\tr(w,V_\mu^T)$ can be computed from the Weyl Character Formula, after cancelling poles arising from the set $R_t^+=\{\al\in R^+:\ e_\al(t)=1\}$. We obtain (\cite{reeder:prehom} and section \ref{weylharmonic} below)
\begin{equation}\label{intro:elliptictrace}
\tr(w,V_\mu^T)=\frac{1}{\De(t)}\sum_{v\in W^t} \vep(v)e_{v\mu}(t) H_t(v\mu),
\end{equation}
where 
\[\De=
e_\rho\prod_{\al\in R^+\setminus R_t^+}(1-e_{-\al}(t)),\quad
W^t=\{v\in W: v^{-1}R_t^+\subset R^+\},\quad
H_t(v\mu)=\prod_{\al\in R_t^+}
\frac{\la v\mu,\check\al\ra}{\la \rho_t,\check\al\ra}
\]
and $\rho_t$ is the half-sum of the roots in $ R_t^+$.  
The monomials $H_t$ are $W$-harmonic polynomials on $\ft^\ast$. Their geometric meaning is discussed at the end of section \ref{weylharmonic}.  

Let $\wh G$ be a compact Lie group dual to $G$. Then $P$ may be regarded as the lattice of one-parameter subgroups of a maximal torus $\wh T$ of $\wh G$. Let $m$ be the order of $w$ and let $\hat\mu=\mu(e^{2\pi i/m})\in \wh T$. Let $C_G(t)$ and $C_{\wh G}(\wh\mu)$ be the corresponding centralizers. From \eqref{intro:elliptictrace}, one gets the following vanishing result.

\begin{thm}\label{ellvanishing}
If $\dim C_G(t)<\dim C_{\wh G}(\wh\mu)$ then 
$\tr(w,V_\mu^T)=0$. 
\end{thm}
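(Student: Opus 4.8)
The plan is to reduce the statement to a count of roots and then kill the trace by antisymmetrizing the Weyl character formula over the Weyl group of $C_{\wh G}(\wh\mu)$. First I would translate the hypothesis. Since $\wh G$ is dual to $G$, the roots of $\wh T$ in $\wh\fg$ are the coroots $\check\al$ ($\al\in R$), and $\check\al(\wh\mu)=\check\al(\mu(e^{2\pi i/m}))=e^{2\pi i\la\mu,\check\al\ra/m}$. Hence the root system of $C_{\wh G}(\wh\mu)$ is $S:=\{\al\in R:\ m\mid\la\mu,\check\al\ra\}$, with positive part $S^+=S\cap R^+$; one checks directly that $S$ is closed under reflection, so it is a genuine sub-root-system with Weyl group $W_S=\la s_\al:\al\in S\ra\subset W$ whose reflections are exactly the $s_\be$, $\be\in S^+$. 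As both $\dim C_G(t)$ and $\dim C_{\wh G}(\wh\mu)$ equal $\dim T$ plus twice the number of their positive roots, the hypothesis $\dim C_G(t)<\dim C_{\wh G}(\wh\mu)$ becomes the clean inequality $|R_t^+|<|S^+|$.

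Next I would put the trace into a manifestly $W$-antisymmetric shape. Applying the constant-coefficient operator $\prod_{\al\in R_t^+}\partial_{\check\al}$ to the numerator and denominator of the Weyl character formula and evaluating at the singular element $t$ (L'H\^opital along the $R_t^+$-directions, which is exactly what produces the harmonic monomials $H_t$ in \eqref{intro:elliptictrace}) rewrites the trace as
\[
\tr(w,V_\mu^T)=\frac{N(\mu)}{N(\rho)},\qquad N(\mu):=\sum_{v\in W}\vep(v)\,\pi_t(v\mu)\,e_{v\mu}(t),\quad \pi_t(\lambda):=\prod_{\al\in R_t^+}\la\lambda,\check\al\ra,
\]
with $N(\rho)\neq 0$. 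This form has two virtues. By reindexing $v\mapsto vw'$ one sees that $\mu\mapsto N(\mu)$ is $W$-\emph{alternating}: $N(w'\mu)=\vep(w')N(\mu)$ for all $w'\in W$. And each exponential is $W_S$-invariant in $\mu$: for $\be\in S$ one has $e_{vs_\be\mu}(t)=e_{v\mu}(t)$ because $e_{v\be}(t)^m=e_{v\be}(t^m)=1$ (as $t^m\in Z(G)$ and roots are trivial on the center), while $m\mid\la\mu,\check\be\ra$; iterating over generators gives $e_{vw'\mu}(t)=e_{v\mu}(t)$ for every $w'\in W_S$ and every $v\in W$.

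Finally I would antisymmetrize $N$ over $W_S$ and compare the two evaluations. On one hand the $W$-alternating property gives $\sum_{w'\in W_S}\vep(w')N(w'\mu)=|W_S|\,N(\mu)$. On the other hand, pulling the $W_S$-invariant exponentials out,
\[
\sum_{w'\in W_S}\vep(w')N(w'\mu)=\sum_{v\in W}\vep(v)\,e_{v\mu}(t)\sum_{w'\in W_S}\vep(w')\,\pi_t(vw'\mu),
\]
and the inner sum is the $W_S$-antisymmetrization of the polynomial $\lambda\mapsto\pi_t(v\lambda)$, which has degree $|R_t^+|$. A nonzero $W_S$-alternating polynomial has degree at least the number of reflections $|S^+|$, so by the inequality $|R_t^+|<|S^+|$ from the first step every inner sum vanishes. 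Hence $|W_S|\,N(\mu)=0$, whence $N(\mu)=0$ and $\tr(w,V_\mu^T)=N(\mu)/N(\rho)=0$.

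The main obstacle is the middle step. Formula \eqref{intro:elliptictrace} is written as a sum over coset representatives $W^t$, and its invariance under the full group $W$ acting on $\mu$ is invisible there, since neither $H_t(v\mu)$ nor $e_{v\mu}(t)$ transforms simply under right multiplication of $v$ by $W_t$. Repackaging the trace as the single $W$-alternating quotient $N(\mu)/N(\rho)$ — that is, verifying that differentiating the Weyl numerator and denominator by $\prod_{\al\in R_t^+}\partial_{\check\al}$ and evaluating at $t$ indeed reproduces \eqref{intro:elliptictrace} — is the delicate part, and it goes hand in hand with the arithmetic input $t^m\in Z(G)$ that links the order-$m$ element $w$ to the point $\wh\mu$. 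Once these are in place, the degree bound on alternating polynomials finishes the argument at once.
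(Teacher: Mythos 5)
Your proposal is correct and follows essentially the same route as the paper's: up to a nonzero constant your $N(\mu)$ is the harmonic polynomial $K_y$ of Proposition \ref{torsiontrace}, your $W_S$-antisymmetrization is the paper's observation that $K_y$ transforms by the sign character of the stabilizer $W_y\supseteq W_S$ of $\mu+mQ$ and is therefore divisible by $\prod_{\check\al\in\check R_\mu^+}\check\al$, and the same degree count $|R_t^+|<|\check R_\mu^+|$ forces the vanishing. The ``delicate'' middle step you flag is already supplied by formula \eqref{tr(t)} together with the collapse of the full $W$-sum to $|W_t|$ times the $W^t$-sum.
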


For example, if $\mu\in mQ$, where $m$ is the order of $w$, then $\wh \mu=1$, so Thm. \ref{ellvanishing} implies that  $\tr(w,V_\mu^T)=0$. Hence if 
$\mu\in nQ$, where $n$ is the least common multiple of the orders of the elliptic elements in $W$ then Thm. \ref{ellvanishing} implies that the character of $V_\mu^T$ is a linear combination of induced characters from proper parabolic subgroups of $W$. 

Though useful, formula \eqref{intro:elliptictrace} does not tell the whole story. For example if $w=\cox$ it expresses $\tr(\cox, V_\mu^T)$ as a sum of $|W|$ terms, but we know from Kostant that cancellations put the actual trace in $\{-1,0,1\}$. 
This is because Coxeter elements have an additional property shared by some but not all elliptic elements.

We say that $w\in W$ is {\it regular} if the subgroup of $W$ generated by $w$ acts freely on $R$ (cf. \cite{springer:regular}). For elliptic regular elements we generalize Kostant's formula as follows. 
\begin{thm}\label{ellreg}
Assume $w$ is elliptic and regular. Then 
$\tr(w,V_{\mu}^T)=0$ unless there exists $v\in W$ such that $v\mu\in \rho+mQ$, in which case
\[
\tr(w,V_{\mu}^T)=\vep(v)\prod
%_{\check\al\in \check R_\mu^+}
\frac{\la \mu,\check\al\ra}{\la \rho,\check\al\ra},
\]
where the product is over the positive coroots $\check\al$ of $G$ for which $\la \mu,\check\al\ra\in m\bz$. 
\end{thm}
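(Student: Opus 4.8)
The plan is to start from the elliptic trace formula \eqref{intro:elliptictrace} and to exploit the extra rigidity coming from regularity together with a duality between $G$ and $\wh G$. First I would fix a good representative $t\in T\cap w^G$, so that $\tr(w,V_\mu^T)=\tr(t,V_\mu)$ by \eqref{wcfelliptic}. Because $w$ is regular of order $m$, Springer's theory (\cite{springer:regular}) lets me take $t$ to be a $\zeta$-regular element with $\zeta=e^{2\pi i/m}$; concretely $t=\tau(\zeta)$ for a suitable coweight $\tau$, so that $e_\al(t)=\zeta^{\la\al,\tau\ra}$ for every root and $R_t^+=\{\al\in R^+:\ \la\al,\tau\ra\in m\bz\}$ is controlled by the single parameter $\tau$. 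Writing $A_\nu(t)=\sum_{v\in W}\vep(v)e_{v\nu}(t)$ for the Weyl numerator, the character value is $A_\mu(t)/A_\rho(t)$, regularized as in \eqref{intro:elliptictrace} to cancel the poles indexed by $R_t^+$.

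The engine of the proof is the elementary identity, valid after reindexing $v\mapsto v^{-1}$,
\[
A_\mu(t)=\sum_{v\in W}\vep(v)\,\zeta^{\la v\mu,\tau\ra}=\sum_{v\in W}\vep(v)\,\zeta^{\la \mu,v^{-1}\tau\ra}=\wh A_\tau(\wh\mu),
\]
where $\wh A_\tau(\wh g)=\sum_v\vep(v)(v\tau)(\wh g)$ is the Weyl numerator on the dual torus $\wh T$ and $\wh\mu=\mu(\zeta)$. This interchanges the weight $\mu$ with the coweight $\tau$ and $G$ with $\wh G$: the coroots $\check\al$ of $G$ are the roots of $\wh G$, and $\check\al(\wh\mu)=\zeta^{\la\mu,\check\al\ra}$, so the stabilizer $W_{\wh\mu}$ is generated by the reflections in the coroots with $\la\mu,\check\al\ra\in m\bz$ --- exactly the roots of $C_{\wh G}(\wh\mu)$ that appear in the asserted product. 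Applying the same identity with $\mu$ replaced by $\rho$ gives $A_\rho(t)=\wh A_\tau(\wh\rho)$ with $\wh\rho=\rho(\zeta)$, whence
\[
\tr(w,V_\mu^T)=\frac{\wh A_\tau(\wh\mu)}{\wh A_\tau(\wh\rho)}.
\]

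Next I would read off the vanishing and the sign. Since $\wh A_\tau$ is $W$-anti-invariant, $\wh A_\tau(v^{-1}\wh\rho)=\vep(v)\wh A_\tau(\wh\rho)$; the hypothesis $v\mu\in\rho+mQ$ is precisely the statement $\wh\mu=v^{-1}\wh\rho$, because $(mQ)(\zeta)=1$, and this is what produces the factor $\vep(v)$. When no $v$ makes $\wh\mu$ conjugate to $\wh\rho$, the centralizer inequality of Theorem \ref{ellvanishing} (equivalently, the vanishing of $\wh A_\tau(\wh\mu)$) forces $\tr(w,V_\mu^T)=0$. The coroot product then emerges as a ratio of leading terms: both $\wh A_\tau(\wh\mu)$ and $\wh A_\tau(\wh\rho)$ vanish along the walls $\check\al$ with $\la\mu,\check\al\ra\in m\bz$, and expanding each Weyl-denominator factor $1-\check\al(\cdot)$ to first order replaces it by its linear term, contributing $\la\mu,\check\al\ra$ in the numerator and $\la\rho,\check\al\ra$ in the denominator. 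This is the Weyl dimension formula for $C_{\wh G}(\wh\mu)$, and it reproduces $\prod\la\mu,\check\al\ra/\la\rho,\check\al\ra$. Regularity is used to guarantee that the surviving element $v$ is essentially unique and that the vanishing is simple, so that \eqref{intro:elliptictrace} collapses to this single monomial rather than to the weighted partition function $\scp_w$ of the general case.

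The main obstacle I anticipate is the simultaneous regularization on the two sides of the duality: matching the $G$-side poles coming from $R_t^+$ in \eqref{intro:elliptictrace} against the $\wh G$-side zeros along the coroots of $C_{\wh G}(\wh\mu)$, and verifying that all the relevant vanishing is first order, so that the ratio of leading coefficients is exactly the stated product and not a more complicated alternating sum. This needs the precise Springer normalization of $\tau$ (ensuring $t$ is genuinely $\zeta$-regular) and careful bookkeeping of the two lattices --- the coset $\rho+mQ$ versus the kernel of $\mu\mapsto\wh\mu$ --- in order to pin down both the existence criterion and the sign $\vep(v)$ unambiguously.
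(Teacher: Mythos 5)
Your route is essentially the Kostant--Prasad one: transfer the Weyl numerator to the dual group via $A_\mu(t)=\wh A_\tau(\wh\mu)$ and read the answer off as a Weyl dimension formula for $C_{\wh G}(\wh\mu)$. The paper argues differently: it packages the regularized numerator into a $W$-harmonic polynomial $K_y$ of degree $|R_t^+|$ which is anti-invariant under the stabilizer $W_y$, hence divisible by $\prod_{\check\al\in\check R_y^+}\check\al$; a degree count then gives vanishing or proportionality (Prop.~\ref{torsiontrace}), the constant is evaluated at $\rho$ by Weyl's identity (Lemma~\ref{yrho}), and the degree comparison $|R_t^+|\le|\check R_y^+|$ with its equality criterion is supplied by the centralizer inequality \eqref{thomae} (Prop.~\ref{ellregtrace}). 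Your approach is viable in principle, but as written it has genuine gaps exactly at the points you flag as ``obstacles.''

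First, the identity $\tr(w,V_\mu^T)=\wh A_\tau(\wh\mu)/\wh A_\tau(\wh\rho)$ is $0/0$ for every non-Coxeter elliptic regular $w$ (since $R_t^+\neq\varnothing$), and the character value is defined by a limit $t'\to t$ in $T$, not by perturbing $\wh\mu$ and $\wh\rho$ in $\wh T$. These two regularizations do not automatically agree; to identify them you must perturb $t$ along $\check\rho$ itself, i.e.\ take $t=2\check\rho(\eta)$ (which lies in $w^G$ by \cite[Prop.~8]{rlyg}) and use the principal specialization $\sum_v\vep(v)q^{\la v\mu,\check\rho\ra}=\prod_{\check\al>0}\bigl(q^{\la\mu,\check\al\ra/2}-q^{-\la\mu,\check\al\ra/2}\bigr)$ before letting $q\to\zeta$. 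Without that factorization, ``expanding each factor to first order'' is unjustified, and your derivation of the sign from $\wh A_\tau(v^{-1}\wh\rho)=\vep(v)\wh A_\tau(\wh\rho)$ is the vacuous identity $0=\vep(v)\cdot 0$. Second, your vanishing argument only covers the strict inequality of Theorem~\ref{ellvanishing}; you must also exclude the case $\dim C_{\wh G}(\wh\mu)=\dim C_G(t)$ with $\wh\mu$ not conjugate to $\wh\rho$, and this requires the equality criterion in \eqref{thomae} (equality forces $\wh\mu\sim\rho(\zeta)$), which is precisely the content of Prop.~\ref{ellregtrace}. Third, $\wh\mu=\mu(\zeta)$ determines $\mu$ only modulo $mP$, so ``$\wh\mu=v^{-1}\wh\rho$'' is strictly weaker than ``$v\mu\in\rho+mQ$''; your parenthetical ``because $(mQ)(\zeta)=1$'' proves the wrong direction, since $(mP)(\zeta)=1$ as well. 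Bridging $mP$ and $mQ$ is where ellipticity enters a second time: the paper shows $W_{\wh\rho}$ acts transitively on the fiber of $P/mQ\to P/mP$ over $\rho+mP$ by applying Lemma~\ref{transitive} to $\wh G_{sc}$. Each of these gaps is closable, but together they constitute the actual proof.
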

Theorem \ref{ellreg} shows that the harmonic polynomial in \eqref{intro:elliptictrace} is actually a harmonic {\it monomial} when viewed from the dual group $\wh G$.
The key to Thm. \ref{ellreg} is that the assumed inequality in Thm. \ref{ellvanishing} holds automatically when (and only when) $w$ is both elliptic and regular. (See \cite{reeder:thomae} and section \ref{sec:ellreg} below.)

The dual group $\wh G$ was first used by D. Prasad in \cite{prasad:cox} to give a new interpretation of Kostant's result for $\tr(\cox, V_\mu^T)$.
In this case $m=h$ is the Coxeter number,  $\check R^+_\mu=\varnothing$ and Thm. \ref{ellreg} becomes Kostant's formula for $\tr(\cox, V_\mu^T)$.  

At the opposite extreme, if the long element $w_0=-1$ in $W$ then $w_0$ is an elliptic involution. Applying Thm. \ref{ellreg} to $w_0$ gives the following qualitative result (see section \ref{irredzero}). 

\begin{cor}\label{irred} Assume  $-1\in W$ but $G$ is not $\SU_2$.  Then there are only finitely many $V\in \Irr(G)$ for which $V^T$ is an irreducible representation of $W$. 
\end{cor}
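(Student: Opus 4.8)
The plan is to apply Theorem~\ref{ellreg} to the longest element $w_0=-1$, exploit that $w_0$ is central, and thereby reduce the irreducibility of $V_\mu^T$ to a single numerical identity. First I would check the hypotheses of Theorem~\ref{ellreg} for $w_0$: it acts on $T$ by $t\mapsto t^{-1}$, so its fixed points form the $2$-torsion and $d(w_0)=0$, i.e. $w_0$ is elliptic; the cyclic group $\{1,w_0\}$ acts freely on $R$ since $-\al\neq\al$, so $w_0$ is regular; and its order is $m=2$. Moreover, $w_0=-\id$ on $\ft^\ast$ is central in $W$, so by Schur's Lemma it acts on any $U\in\Irr(W)$ as a scalar, necessarily $\pm1$.

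Let $V_\mu^T(+1)$ and $V_\mu^T(-1)$ be the eigenspaces of $w_0$ on $V_\mu^T$. Irreducibility of $V_\mu^T$ forces one of them to vanish. Theorem~\ref{ellreg} computes their difference: $\dim V_\mu^T(+1)-\dim V_\mu^T(-1)=\tr(w_0,V_\mu^T)$, which is $0$ unless $\mu\in W(\rho+2Q)$, and otherwise equals $\vep(v)M(\mu)$ where
\[ M(\mu)=\prod_{\al\in\check R^+_\mu}\frac{\la\mu,\check\al\ra}{\la\rho,\check\al\ra},\qquad \check R^+_\mu=\{\al\in R^+:\ \la\mu,\check\al\ra\in2\bz\}. \]
Since $\mu$ is dominant regular, every factor of $M(\mu)$ is positive, so $M(\mu)>0$. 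Hence $V_\mu^T$ irreducible implies $\mu\in W(\rho+2Q)$ and $\dim V_\mu^T=M(\mu)$, and to prove the corollary it suffices to show that $\min\bigl(\dim V_\mu^T(+1),\dim V_\mu^T(-1)\bigr)\ge1$, equivalently $\dim V_\mu^T>M(\mu)$, for all but finitely many $\mu$.

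I would establish this by an asymptotic comparison. The left side $\dim V_\mu^T$ is the zero-weight multiplicity of $V_\mu$; by Kostant's formula it is a quasi-polynomial in $\mu$ of degree $N-\ell$ with positive leading term, where $N=|R^+|$ and $\ell=\dim T$. The hypotheses $-1\in W$ and $G\neq\SU_2$ force $\ell\ge2$ and hence $N-\ell\ge1$ (the degenerate case $N=\ell$, in which $\dim V_\mu^T\equiv1$ and infinitely many irreducible zero-weight spaces occur, is exactly $\SU_2$). The right side $M(\mu)$ is a product of linear forms; since $\check R^+_\mu$ depends only on the class of $\mu$ modulo $2Q$, the set $S=\check R^+_\mu$ is constant on each such class and there $M$ is a fixed polynomial of degree $|S|$. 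Thus, away from the walls of the dominant cone, $\dim V_\mu^T$ grows like $\|\mu\|^{N-\ell}$ while $M(\mu)$ grows like $\|\mu\|^{|S|}$, and the desired strict inequality will hold for large $\mu$ once the two growth rates are reconciled.

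I expect the main obstacle to be the behaviour in the singular directions. Along a wall of the dominant cone the zero-weight multiplicity grows strictly more slowly than $\|\mu\|^{N-\ell}$, and precisely there $\dim V_\mu^T$ can remain small---even bounded---while $\mu\to\infty$, so a naive degree count does not by itself yield $\dim V_\mu^T>M(\mu)$. One must show that in every such direction the multiplicity still eventually exceeds $M(\mu)$, equivalently that $w_0$ acquires both eigenvalues on $V_\mu^T$ once $\mu$ is large; concretely, that $V_\mu^T$ then contains irreducible constituents on which $w_0$ acts by $+1$ as well as others on which it acts by $-1$. Controlling this singular-direction asymptotic---for which the full character formula of the paper, and not Theorem~\ref{ellreg} alone, is the natural tool---is where I expect the real work to lie, and it is exactly the step that collapses for $\SU_2$.
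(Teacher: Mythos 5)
Your first two paragraphs are correct and coincide exactly with the paper's own reduction: $w_0$ is an elliptic regular involution, it acts by a scalar $\pm1$ on any irreducible $W$-module, and Theorem \ref{ellreg} then forces $\mu+2Q\in W(\rho+2Q)$ and $\dim V_\mu^T=M(\mu)$ whenever $V_\mu^T$ is irreducible. The gap is everything after that. You reduce the corollary to the inequality $\dim V_\mu^T>M(\mu)$ for all but finitely many $\mu$, and then concede that you cannot control it in the singular directions, ``where the real work lies''; so the proof is not finished, and moreover this is the wrong target. You never use the one piece of information that makes the problem easy: if $V_\mu^T$ is irreducible then $M(\mu)=\dim V_\mu^T$ is the dimension of an irreducible representation of the \emph{finite} group $W$, hence $M(\mu)\le D:=\max\{\dim U:\ U\in\Irr(W)\}$, a constant depending only on $W$. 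The paper's argument in section \ref{sec:irredzero} stops there: $M(\mu)$ is an explicit nonempty product of positive integer linear forms in $\mu$ (nonempty precisely because $G\neq\SU_2$, so $\check R_\mu^+\neq\varnothing$), each factor does not decrease when a fundamental weight is added to $\mu$, and the bound $M(\mu)\le D$ confines $\mu$ to a finite list in each coset of $2Q$. No lower bound on the zero weight multiplicity is ever needed, so the asymptotics of $\dim V_\mu^T$ --- including the singular-direction problem you flag --- are simply never encountered.

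One fair remark in your favour: the residual finiteness check on $\{\mu:\ M(\mu)\le D\}$ does require that the forms $\la\cdot,\check\al\ra$, $\check\al\in\check R_\mu^+$, jointly involve every fundamental-weight coordinate of $\mu$. That is a finite combinatorial verification about the root system, not an asymptotic statement about multiplicities, but it can fail in low rank: already for $\Sp_4$, on the ray $\mu=a\om_\al+2\om_\be$ one has $\check R_\mu^+=\{\check\be\}$ and $M(\mu)$ constant in $a$. On such residual families one must indeed fall back on the exact equation $\dim V_\mu^T=M(\mu)$ --- a version of your comparison --- but only along finitely many explicit rays where $M$ is bounded and $\dim V_\mu^T$ is visibly unbounded, not over the whole dominant cone. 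The efficient division of labour is: use $M(\mu)\le D$ wherever $M$ grows, and invoke $\dim V_\mu^T=M(\mu)$ only on the exceptional families. Your proposal runs the second, harder comparison everywhere and consequently stalls.
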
 
%Indeed, if $V_\mu^T$ is irreducible then $w_0=-1_W$ acts by $\pm 1$ on $V_\mu^T$, so $\tr(w_0,V^T_\mu)\neq 0$. If we write $\mu=\sum_{i=1}^\ell a_i \om_i$, where $\om_i$ are the fundamental weights, then the formula in Theorem \ref{ellreg} is strictly monotonic in each $a_i$. It follows that for each integer $M$ there are only finitely many $\mu\in P_{++}$ for which $\tr(w_0,V_\mu^T)\leq M$. Taking $N$ to be the maximum of $\dim U$ for all $U\in \Irr(W)$  gives Cor. \ref{irred}. In practice very few $\mu$'s satisfy $\tr(w_0,V_\mu^T)\leq M$ so this qualitative result can be made effective. 

For $\SU_n$, $n\geq 2$, the symmetric powers $\Sym^{kn}(\bc^n)^T$, for $k=0,1,2,\dots$, afford the trivial and sign and sign characters alternately. On the other hand, for $\SU_4$ the two-dimensional representation of $W$ is all of $V_\mu^T$ for just one $\mu$ (see section \ref{SU4}).
Cor. \ref{irred} can be sharpened to classify irreducible zero weight spaces of other groups. See section \ref{irredzero} which includes some history of this problem.

\subsection{A general character formula for zero weight spaces}

Now take any element $w\in W$. Let $S=(T_w)^\circ$ be the identity component of $T_w$ and let $L=C_G(S)$ be the centralizer of the torus $S$.  There is an element $t\in T$ which is $L$-conjugate to an element of the coset $w$. We fix such a $t$ and consider the coset $tS\subset T$. 

The positive roots are partitioned as  $R^+=R_{tS}^+\sqcup R^{tS}_1\sqcup R^{tS}_2$, 
where 
\[\begin{split}
R_{tS}^+&=\{\al\in R^+:\ e_\al\equiv 1\ \text{on $tS$}\}, \\
R^{tS}_1&=\{\al\in R^+:\ e_\al\equiv e_\al(t)\neq 1\ \text{on $tS$}\},\\ 
R^{tS}_2&=\{\al\in R^+:\ e_\al \ \text{is nonconstant on $tS$}\}.
\end{split}
\]
As in the elliptic case, we set 
\[\begin{split}
\rho_{tS}&=\frac{1}{2}\sum\limits_{\al\in R_{tS}^+}\al\qquad \qquad\ 
\De=e_\rho\prod_{\al\in R^{tS}_1}(1-e_{-\al})\\
H_{tS}(\lam)&=\prod\limits_{\al\in R_{tS}^+}\dfrac{\la \lam,\check\al\ra}{\la\rho_{tS},\check\al\ra}
\qquad W^{tS}=\{v\in W:\ v^{-1}R_{tS}^+\subset R^+\}.
\end{split}
\]
The set $R^{tS}_2=\{\be_1,\dots,\be_r\}$ determines a weighted partition function $\scp_w$ as follows. 
Let $Y$ be the character lattice of $S$. For each $i=1,\dots,r$, the restriction of $e_{\be_i}$ to $S$ is a non-trivial character $e_{\nu_i}$ for some $\nu_i\in Y$. Let $z_i=e_{-\be_i}(t)$. 
Now for $\nu\in Y$, let 
\[\scp_w(\nu)=\sum z_1^{n_1}z_2^{n_2}\cdots z_r^{n_r}\]
where the sum runs over all $r$-tuples $(n_1,\dots, n_r)$ of nonnegative integers such that 
$\sum\limits_{i=1}^rn_i\nu_i=\nu$. 
\begin{thm}\label{general}
With notation as above, we have 
\begin{equation}\label{eq:general}\tr(w,V_\mu^T)=
\frac{1}{\De(t)}\sum_{v\in W^{tS}}\vep(v) e_{v\mu}(t)\scp_w(v\mu-\rho)H_{tS}(v\mu).
\end{equation}
\end{thm}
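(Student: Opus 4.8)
The plan is to deduce \eqref{eq:general} from the elliptic formula \eqref{intro:elliptictrace} by means of (i) an averaging identity over the central torus $S=(T_w)^\circ$ of $L=C_G(S)$, and (ii) the pole‑cancelled Weyl character formula evaluated along the coset $tS$; the integration over $S$ then manufactures the weighted partition function $\scp_w$.

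First I would establish the averaging identity
\[
\tr(w,V_\mu^T)=\int_S \ch V_\mu(ts)\,ds ,
\]
with $ds$ the Haar measure of total mass $1$ on $S$. Since $t$ and every $s\in S$ lie in $T$ they commute, so $\int_S \ch V_\mu(ts)\,ds=\tr\big(t,V_\mu^S\big)$: averaging the $S$‑action gives the operator $\int_S s\,ds$, which is the orthogonal projection of $V_\mu$ onto the $S$‑fixed space $V_\mu^S$, and $t$ preserves it. For the left‑hand side, note that the representatives of $w$ centralise $S$, so $w$ lies in the Weyl group $W_L$ of $L$, which is the Weyl group of the derived group $[L,L]$; and since $\ft^{w}=\Lie S$ by definition of $S=(T_w)^\circ$, the element $w$ has no nonzero fixed vector on the Cartan subalgebra of $[L,L]$, i.e.\ $w$ is \emph{elliptic} there. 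The space $V_\mu^S$ is an $[L,L]$‑module with $(V_\mu^S)^T=V_\mu^T$, and $t$ — being $L$‑conjugate to an element of the coset $w$ — represents this elliptic class on $V_\mu^S$; hence \eqref{wcfelliptic}, applied inside $[L,L]$, gives $\tr(w,V_\mu^T)=\tr(t,V_\mu^S)$. Comparing the two computations yields the identity. This is the conceptual heart of the argument; it specialises to $\dim V_\mu^T$ when $w=1$ (then $S=T$) and to $\ch V_\mu(t)$ when $w$ is elliptic (then $S=1$), recovering \eqref{wcfelliptic}.

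Next I would expand the integrand. For generic $s\in S$ the roots trivial on $ts$ are exactly those of $R^+_{tS}$: roots in $R^{tS}_1$ remain equal to $e_\al(t)\neq1$, and those in $R^{tS}_2$ are nonconstant, avoiding the value $1$ off a measure‑zero set. Thus \eqref{intro:elliptictrace}, read as the regularised value of $\ch V_\mu$ at a torus point whose set of trivial roots is $R^+_{tS}$, applies verbatim at $\tau=ts$ and gives
\[
\ch V_\mu(ts)=\frac{1}{\De(t)\,e_\rho(s)\prod_{\al\in R^{tS}_2}\big(1-e_{-\al}(ts)\big)}\sum_{v\in W^{tS}}\vep(v)\,e_{v\mu}(ts)\,H_{tS}(v\mu),
\]
where I have used $e_\rho(ts)=e_\rho(t)e_\rho(s)$ and the fact that $\prod_{\al\in R^{tS}_1}(1-e_{-\al})$ is constant on $tS$ to assemble $\De(t)=e_\rho(t)\prod_{\al\in R^{tS}_1}(1-e_{-\al}(t))$, leaving the $R^{tS}_2$‑factor and the residual $e_\rho(s)$ as functions of $s$. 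Writing $e_{v\mu}(ts)/e_\rho(s)=e_{v\mu}(t)\,e_{v\mu-\rho}(s)$ and expanding
\[
\prod_{\al\in R^{tS}_2}\big(1-e_{-\al}(ts)\big)^{-1}=\sum_{(n_\al)}\Big(\prod_\al e_{-\al}(t)^{n_\al}\Big)\,e_{-\sum_\al n_\al\nu_\al}(s),\qquad \nu_\al=\al|_S,
\]
the relation $\int_S e_{\lam}(s)\,ds$ (which is $1$ if $\lam|_S=0$ and $0$ otherwise) selects the terms with $\sum_\al n_\al\nu_\al=(v\mu-\rho)|_S$, so the $s$‑integral of the $v$‑summand equals $\scp_w(v\mu-\rho)$ with weights $z_i=e_{-\be_i}(t)$ exactly as defined. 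Pulling the $s$‑independent factors $\vep(v)$, $e_{v\mu}(t)$, $H_{tS}(v\mu)$ out then produces \eqref{eq:general}; the shift by $\rho$ in the argument of $\scp_w$ is created precisely by the leftover Weyl‑denominator factor $e_\rho(s)$, since $\rho|_S=\tfrac12\sum_{\al\in R^{tS}_2}\nu_\al$.

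The main obstacle is the rigorous justification of this term‑by‑term expansion. Each summand over $W^{tS}$ is singular on the measure‑zero set of $s$ where some $\al\in R^{tS}_2$ becomes trivial on $ts$; only the full sum equals the smooth character $\ch V_\mu(ts)$, and the geometric series $\sum_n e_{-\al}(ts)^n$ does not converge pointwise on the compact torus $S$. The resolution is positivity: because $L=C_G(S)$ is a Levi subgroup, $R^{tS}_2$ is the set of positive roots in the nilradical of the parabolic with Levi $L$, so the characters $\nu_i=\be_i|_S$ pair strictly positively with a dominant cocharacter of $S$ and lie in an open half‑space of $Y\otimes\br$. This makes $\scp_w(\nu)$ a finite sum for every $\nu$ and identifies the formal expansion with the genuine, finite branching multiplicities of $V_\mu|_L$. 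Establishing this finiteness, and keeping the decomposition $\rho=\rho_{tS}+\tfrac12\sum_{\al\in R^{tS}_1}\al+\tfrac12\sum_{\al\in R^{tS}_2}\al$ straight so that both the argument $v\mu-\rho$ of $\scp_w$ and the weights $z_i=e_{-\be_i}(t)$ emerge exactly as stated, is where the real work of the proof lies.
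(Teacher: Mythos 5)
Your overall strategy --- average over $S$, evaluate the pole-cancelled Weyl character formula on the coset $tS$, and invert the $R_2^{tS}$-factor to manufacture $\scp_w$ --- is exactly the paper's (Prop.~\ref{prop:constant}, Lemma~\ref{singularweyl}, Lemma~\ref{formalfourier}). Your derivation of the averaging identity is a valid repackaging of Lemma~\ref{constant}: the paper expands $\tr(ns,V)$ over the $w$-fixed weights and uses injectivity of the restriction $P_w\to Y$ to isolate $\lam=0$, whereas you project to $V_\mu^S$ first and invoke ellipticity of $w$ in the Weyl group of $C_G(S)$; these are the same facts in a different order. Your second step, applying the pole-cancelled formula \eqref{tr(t)} at the points $ts$ with $s\in S_0$, is legitimate and reproduces Lemma~\ref{singularweyl}.

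The genuine gap is in the last step, which you locate but do not close. Finiteness of each value $\scp_w(\nu)$ does not license integrating the divergent geometric expansion of $\prod_{\be\in R_2^{tS}}(1-e_{-\be}(ts))^{-1}$ term by term against $ds$: that series has no pointwise or $L^1$ meaning on the compact torus, so ``$\int_S$ of the expansion'' is not yet a defined object, and the appeal to ``genuine, finite branching multiplicities of $V_\mu|_L$'' asserts the conclusion rather than proving it. Moreover, the positivity claim you lean on --- that $R_2^{tS}$ is the set of roots of the nilradical of a parabolic with Levi $L$, so the $\nu_i$ lie in an open half-space of $Y\otimes\br$ --- fails for general $w$ unless $R^+$ is adapted to $S$: for $w=(13)(24)$ in $\SU_4$ one has $S=\{\diag(a,a^{-1},a,a^{-1})\}$ and the restrictions of $R_2^{tS}$ to $S$ include both $a^2$ and $a^{-2}$. (The paper's definition of $\scp_w$ tacitly needs the same pointedness, so one should first conjugate $w$ into such a position; but your argument uses it as an established fact rather than an arranged normalization.) The ingredient you are missing is Lemma~\ref{formalfourier}: nothing is expanded analytically; one works in the completed group algebra $\bc((Y))$, where $\hat f\ast\hat h=\hat g$ is an identity among honest finite Fourier sums (since $f(s)=\tr(ts,V_\mu)$ lies in $\bc[S]$), $\hat h$ is invertible there with inverse the weighted partition function, and $\hat f(0)=(\hat g\ast\hat h^{-1})(0)$ is pure algebra. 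Supplying that lemma, or an equivalent limiting argument, is precisely the ``real work'' your proposal defers.
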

This is proved similarly to \eqref{intro:elliptictrace}, but now we take the constant term along $S$ of the restriction of the character of $V_\mu$ to the coset $tS$. See section \ref{sec:general}.

If $w=1_W$, formula \eqref{eq:general} becomes Kostant's formula for $\dim V_\mu^T$. Some examples of intermediate partition functions $\scp_w$ are found in  section \ref{smallgroups}.

\subsection{Explicit results}\label{explicit} For non-elliptic $w$, the formula \eqref{eq:general} is  difficult to compute explicitly as a function of $\mu$. The difficulty, measured by $d(w)=\dim S$, is in the weighted partition function $\scp_w$. The most difficult case is $\dim V_\mu^T$, which is known explicitly only for small groups. For general $G$, it was shown in \cite{kumar-prasad}  that the function $\mu\mapsto\dim V_\mu^T$ is a piecewise polynomial function, but there are no explicit formulas for these polynomials. 
For $w\neq 1$, the function $\mu\mapsto\tr(w,V_\mu^T)$ also appears to be a piecewise polynomial function.

In section \ref{smallgroups} we give explicit formulas for $\tr(w,V_\mu^T)$ for all $w$ in the groups of rank two and also $\SU_4$; these are the groups for which explicit polynomial formulas for $\dim(V_\mu^T)$ are known (to me). 

For $\Spin_8$ and $F_4$ we carry out the idea of \ref{irred} above find all $\mu$ for which $V_\mu^T$ is irreducible. These turn out to be just the known examples, coming from small representations \cite{reeder:zero1}.

For $E_7$ and $E_8$ the same idea, with more computation, should also find all irreducible $V_\mu^T$, but we do not address this here.  

For $E_6$, $-1\notin W$. Instead we explicitly compute 
$\tr(w, V_\mu^T)$ for the elliptic triality $w$, using  Thm. \ref{ellreg}. 
When $\tr(w, V_\mu^T)\neq 0$ the method of Cor. \ref{irred} applies. We find that the only  
possibilities for an irreducible representation of $W$ to be a zero weight space are the five known ones (if $\tr(w, V_\mu^T)\neq 0$) and two possible additions (if $\tr(w, V_\mu^T)= 0$). See section \ref{E6}.

\subsection{Earlier work}\label{earlier} Zero-weight spaces have been much-studied in the past half-century. A recent survey of the problem is given in \cite{humphreys}.

For classical groups, \cite{ariki}, \cite{aik}, \cite{amt}, \cite{gay}, \cite{gutkin} use Schur-Weyl duality express the decomposition of $V_\mu^T$ in terms of induced representations of symmetric groups.  For a fixed classical group, say $G=\SU_3$, this method involves multiplicities in induced representations of arbitrarily large symmetric groups. 

For $G=G_2$, \cite{mavrides} gives a computer algorithm, but not a formula, for explicitly computing the character of $V_\mu^T$ for any given $\mu$. 

For ``small" $V_\mu$, the decomposition of $V_\mu^T$ is given explicitly in \cite{gutkin} (for $\SL_n$), \cite{reeder:zero1} (for types $D_n$ and $E_n$) and \cite{reeder:zero2} (for types $B_n, C_n, G_2, F_4$). 

\subsection{Organization of the paper}

Section \ref{sec:weyltorsion} is purely about the Weyl character formula on torsion elements of $G$. These results are applied to elliptic traces in  section \ref{sec:elltrace}. In section \ref{sec:wcf} we return to the Weyl character formula, now to study its values on cosets of subtori in $T$.  
These results are applied to the traces of general elements $w\in W$ in   section \ref{sec:generalw}.
In section \ref{sec:smallgroups} we give explicit formulas for the character of $V_\mu^T$,  for small groups. The final section \ref{sec:irredzero} concerns irreducible zero weight spaces. 

\tableofcontents

\section{The Weyl character formula and torsion elements }
\label{sec:weyltorsion}

This section is purely about the Weyl Character formula. 
Notation is that of section \ref{intro:elltrace}. 

\subsection{The Weyl character formula and harmonic polynomials}\label{weylharmonic}

It is known, from  \cite{reeder:prehom} or as a special case of Thm. \ref{mainthm} below, that for $\mu\in P_{++}$ we have 
\begin{equation}\label{tr(t)}
\tr(t,V_\mu)=
\frac{1}{\De(t)}\sum_{v\in W^t}\vep(v)e_{v\mu}(t)H_t(v\mu).
\end{equation}

The function $\mu\mapsto H_t(\mu)$ and its $W$-translates $H_t^v(\mu):=H_t(v\mu)$ belong to the vector space $\sch$ of $W$-harmonic polynomials on
 $\ft^\ast$.  In \cite{macdonald:truncated} Macdonald showed that $\{H_t^v:\ v\in W^t\}$ spans an irreducible representation of $W$, now called the {\it truncated induction} of the sign character of $W_t$ to $W$. Here $W_t$ is the Weyl group of the  centralizer $C_G(t)$. 

By a theorem of Borel (see \cite[section 5]{reeder:cohom} for a simple proof), the graded vector space $\sch$ is canonically isomorphic to the homology of the flag manifold $\scb=G/T$. The translates $H_t^v$, for $v\in W^t$, correspond to the fundamental classes  of the connected components of the fixed-point submanifold $\scb_t\subset\scb$. 
In this interpretation, equation \eqref{tr(t)}, combined with the Borel-Weil theorem, is a special case of the Atiyah-Segal fixed-point theorem \cite[Theorem (3.3)]{atiyah-segal}.  However the proof of \eqref{tr(t)} given in \cite{reeder:prehom} or Thm. \ref{mainthm} below, is elementary; it boils down to the Weyl dimension formula for $C_G(t)$.

\subsection{Traces of torsion elements in $G$} \label{sec:torsiontrace}

In this section we study the values of \eqref{tr(t)} on torsion elements of $G$.

\begin{prop}\label{torsiontrace} Let $m$ be a positive integer. Suppose $t\in T$ has $\Ad(t)$ of order $m$, and let $y$ be a coset of $mQ$ in $P$. 
\begin{itemize}
\item[(a)] If $|R_t^+|<|\check R_y^+|$, then $\tr(t,V_\mu)=0$ for all  $\mu\in y\cap P_{++}$.  
\item[(b)] If $|R_t^+|=
|\check R_y^+|$ then there is a constant $C_{t,y}\in\bc$ such that 
\[\tr(t,V_\mu)=C_{t,y}\prod_{\check\al\in\check R_y^+}\la\mu,\check \al\ra,
\]
for all $\mu\in y\cap P_{++}$ 
\end{itemize}
\end{prop}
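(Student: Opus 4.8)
The plan is to treat the trace formula \eqref{tr(t)} as a \emph{polynomial} identity on the coset $y$ and then extract both statements from a degree count. Fix a representative $\mu_0\in y$, so that every $\mu\in y$ is of the form $\mu=\mu_0+mq$ with $q\in Q$. The first step is to observe that for each $w\in W$ the value $e_{w\mu}(t)$ is constant as $\mu$ ranges over $y$: indeed $e_{w\mu}(t)=e_{w\mu_0}(t)\,e_{mwq}(t)=e_{w\mu_0}(t)\,e_{wq}(t^m)$, and since $\Ad(t)$ has order $m$ we have $t^m\in Z(G)$, on which the root-lattice character $e_{wq}$ is trivial. Writing $\zeta_w:=e_{w\mu_0}(t)$, formula \eqref{tr(t)} becomes, for $\mu\in y\cap P_{++}$,
\[
\Delta(t)\,\tr(t,V_\mu)=\tilde P(\mu):=\sum_{v\in W^t}\vep(v)\,\zeta_v\,H_t(v\mu),
\]
whose right-hand side is now a genuine polynomial in $\mu\in\ft^\ast$, homogeneous of degree $|R_t^+|$ since each $H_t(v\mu)$ is. As $\Delta(t)\neq0$, the proposition reduces to a statement about $\tilde P$.

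Next I would symmetrize over all of $W$. Using that $u\in W_t$ fixes $t$ under conjugation (whence $\zeta_{uv}=\zeta_v$) together with $H_t(uv\mu)=\vep(u)H_t(v\mu)$, one obtains $\tilde P=\tfrac1{|W_t|}\tilde Q$, where $\tilde Q(\mu):=\sum_{w\in W}\vep(w)\,\zeta_w\,H_t(w\mu)$. The heart of the argument is then to show that $\tilde Q$ is divisible by $\la\mu,\check\be\ra$ for each $\check\be\in\check R_y^+$. Fix such a positive coroot, let $s=s_\be$ be the corresponding reflection, and consider $\mu$ on the hyperplane $\la\mu,\check\be\ra=0$, so that $s\mu=\mu$. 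Pairing each $w$ with $ws$ gives $H_t(ws\mu)=H_t(w\mu)$ and $\vep(ws)=-\vep(w)$, so the two terms cancel provided $\zeta_{ws}=\zeta_w$. This last equality is exactly where the hypothesis $\check\be\in\check R_y^+$ enters: from $s\mu_0=\mu_0-\la\mu_0,\check\be\ra\be$ one computes $\zeta_{ws}=\zeta_w\,e_{w\be}(t)^{-\la\mu_0,\check\be\ra}$, and since $\la\mu_0,\check\be\ra\in m\bz$ and $w\be\in Q$, the extra factor equals $e_{w\be}(t^m)^{-\la\mu_0,\check\be\ra/m}=1$. Hence $\tilde Q$ vanishes identically on $\la\mu,\check\be\ra=0$, so $\la\mu,\check\be\ra$ divides $\tilde Q$, and therefore divides $\tilde P$.

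Since distinct positive coroots yield pairwise non-proportional linear forms, the product $\prod_{\check\al\in\check R_y^+}\la\mu,\check\al\ra$, homogeneous of degree $|\check R_y^+|$, divides the homogeneous polynomial $\tilde P$ of degree $|R_t^+|$. In case (a), a polynomial of degree $|R_t^+|$ divisible by one of strictly larger degree must vanish, so $\tilde P\equiv0$ and $\tr(t,V_\mu)=0$. In case (b) the degrees coincide, so the quotient is a constant $c$, giving $\tilde P(\mu)=c\prod_{\check\al\in\check R_y^+}\la\mu,\check\al\ra$ and hence the asserted formula with $C_{t,y}=c/\Delta(t)$.

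The step I expect to be the main obstacle, and the one carrying the real content, is the divisibility—specifically the identity $\zeta_{ws}=\zeta_w$ for $\check\be\in\check R_y^+$. This is the precise point at which the order-$m$ condition on $\Ad(t)$, the choice of coset modulo $mQ$, and the definition of $\check R_y^+$ all conspire, and it is what singles out exactly the coroots producing cancellations. By contrast, the reduction to a polynomial (constancy of $e_{w\mu}(t)$ on $y$) and the symmetrization over $W$ are comparatively routine, and the concluding degree comparison is immediate once divisibility is established.
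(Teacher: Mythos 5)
Your argument is correct and follows essentially the same route as the paper: both pass to the $W$-symmetrized harmonic polynomial (your $\tilde Q$ is the paper's $K_y$ up to the factor $|W_t|$), establish divisibility by $\prod_{\check\al\in\check R_y^+}\la\cdot,\check\al\ra$ via anti-invariance under the reflections $s_\be$ with $\check\be\in\check R_y^+$, and finish with the degree comparison. Your explicit verification of $\zeta_{ws_\be}=\zeta_w$ is exactly the paper's (unspelled-out) claim that $\tau_y(vu)=\tau_y(v)$ for $u$ in the stabilizer $W_y$, so the only difference is that you have written out the routine check.
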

\begin{proof}
For any $v\in W$,  the function $\mu\mapsto e_{v\mu}(t)$ is constant on each coset of $mQ$ in $P$. 
Hence, for each coset $y\in P/mQ$, we can define a function $\tau_y:W\to\bc$ by 
$\tau_y(v)=e_{v\mu}(t) $ for any $\mu\in y$. We set
\[K_y=\frac{1}{|W_t|}\sum_{v\in W}\vep(v)\tau_y(v) H_t^v,\]
where $H_t^v(\mu)=H_t(v\mu)$. 
Thus $K_y$ is a harmonic polynomial on $\ft^\ast$, of degree $|R_t^+|$ and depending only on the coset $y\in P/mQ$. From equation \eqref{tr(t)} we have
\begin{equation}\label{trQ}
\tr(t,V_\mu)=\frac{1}{\De(t)}\cdot K_y(\mu),\qquad \text{for all}\ \ \mu\in y.
\end{equation}
One checks that 
\begin{equation}\label{dey}
\tau_y(vu)=\tau_y(v)\quad\text{ for all}\quad u\in W_y. 
\end{equation}
Equation \eqref{dey} implies that under the action of $W$ on $\sch$ we have
\[K_y^u=\vep(u)K_y,\quad\text{ for all}\quad u\in W_y.\]
It follows that $K_y$ is divisible by the polynomial
\begin{equation}\label{Cty}
\prod_{\check\al\in\check R^+_y}\check \al.
\end{equation}
Hence if $|R_t^+|<|\check R_y^+|$ we have $K_y=0$ and if 
$|R_t^+|=|\check R_y^+|$ then $K_y$ is a constant multiple of the polynomial \eqref{Cty}.
This proves the Proposition.
\end{proof}

\subsection{Principal weights}

We  continue with $t\in T$ having $\Ad(t)$ of order $m$. Now we consider certain families of weights.

An  weight $\mu\in P$ is {\it $m$-principal} if the $W$-orbit of $\mu$ meets $\rho+mQ$. If $y\in P/mQ$ is the coset of an $m$-principal weight we can compute the constant $C_{t,y}$ appearing in Proposition \ref{torsiontrace}, as follows. 

\begin{lemma}\label{yrho} Assume $y=v\rho+mQ$ for some $v\in W$. Then we have
\[K_{y}(v\rho)=\vep(v)\De(t).\]
\end{lemma}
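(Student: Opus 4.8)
The plan is to reduce the evaluation $K_y(v\rho)$, by a single Weyl re-indexing, to the value of $K$ on the ``principal'' coset $\rho+mQ$, and then to extract that value from the already-proved identity \eqref{trQ} by feeding in the trivial representation. First I would simply unwind the definition of $K_y$ at the point $v\rho$. Since $v\rho$ lies in $y=v\rho+mQ$, the function $\tau_y$ may be computed on the representative $v\rho$, giving $\tau_y(w)=e_{wv\rho}(t)$ and $H_t^w(v\rho)=H_t(wv\rho)$, so that
\[
K_y(v\rho)=\frac{1}{|W_t|}\sum_{w\in W}\vep(w)\,e_{wv\rho}(t)\,H_t(wv\rho).
\]

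Next I would substitute $s=wv$. As $w$ runs over $W$ so does $s$, and $\vep(w)=\vep(s)\vep(v^{-1})=\vep(s)\vep(v)$, so the entire dependence on $v$ factors out:
\[
K_y(v\rho)=\vep(v)\cdot\frac{1}{|W_t|}\sum_{s\in W}\vep(s)\,e_{s\rho}(t)\,H_t(s\rho).
\]
The remaining sum no longer depends on $v$ or on $y$; by the same unwinding as above it is exactly $K_{y_0}(\rho)$, where $y_0=\rho+mQ$ is the coset of $\rho$. Thus the content of the lemma is concentrated in the single value $K_{y_0}(\rho)$.

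Finally I would compute that value using \eqref{trQ} with $\mu=\rho$. Because $G$ is simply connected we have $\rho\in P_{++}$, and $V_\rho$ has highest weight $\rho-\rho=0$, i.e. $V_\rho$ is the trivial representation, so $\tr(t,V_\rho)=1$. Then \eqref{trQ} reads $1=\tr(t,V_\rho)=\tfrac{1}{\De(t)}K_{y_0}(\rho)$, giving $K_{y_0}(\rho)=\De(t)$ and hence $K_y(v\rho)=\vep(v)\De(t)$, as required. I expect the only real obstacle to be conceptual rather than computational: one must recognize that $\rho$ should be read not as an auxiliary shift but as a genuine dominant weight, so that the ``principal'' Weyl sum is precisely the expansion of the character of the trivial representation and \eqref{trQ} collapses to $\De(t)$. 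The routine points to verify are merely that the re-indexing $s=wv$ is a bijection of $W$ and that $\vep(v^{-1})=\vep(v)$, both immediate; notably, neither the divisibility argument nor the constant $C_{t,y}$ from Proposition \ref{torsiontrace} is needed here.
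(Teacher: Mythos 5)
Your proof is correct. The first step, the re-indexing $s=wv$ that pulls out $\vep(v)$ and reduces everything to $K_{\rho+mQ}(\rho)$, is exactly the reduction the paper performs. Where you diverge is in evaluating that base case: the paper applies Weyl's identity to the subgroup $W_t$ directly, obtaining $\De(t)=\sum_{v\in W^t}\vep(v)e_{v\rho}(t)H_t(v\rho)=K_{\rho+mQ}(\rho)$, whereas you feed $\mu=\rho$ into the already-established relation \eqref{trQ} and observe that, with the paper's $\rho$-shifted convention, $V_\rho$ is the trivial representation, so $\tr(t,V_\rho)=1$ and $K_{\rho+mQ}(\rho)=\De(t)$ falls out. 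Your route is legitimate and arguably cleaner as a deduction from the stated results (it needs only that $\rho\in P_{++}$ and that \eqref{trQ} applies there, both of which hold, and there is no circularity since Lemma \ref{yrho} is not used in proving \eqref{tr(t)}). The two arguments are of course cousins: Weyl's identity is precisely what makes the character sum \eqref{tr(t)} collapse to $1$ on the trivial representation, so the paper is invoking the underlying identity once more by hand while you are reusing it in the packaged form \eqref{trQ}. What your version buys is economy — no separate appeal to Weyl's identity for $W_t$ — at the cost of having to recognize that $\rho$ here is a genuine dominant weight indexing the trivial representation rather than merely a shift.
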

\proof One checks that for any $v\in W$ we have 
\[K_{v\rho+mQ}(v\rho)=\vep(v)K_{\rho+mQ}(\rho),\]
so we may assume $v=1$.   

From Weyl's identity (cf. \eqref{D} below) applied to $W_{t}$ 
it follows that 
\[
\De(t)=
\sum_{v\in W^t}\vep(v)e_{v\rho}(t)H(v\rho)=K_{\rho+mQ}(\rho), 
\]
as desired. 
\qed
%Now for $y=v\rho+mQ$ with $v\in W$ arbitrary,  we have
%\[\begin{split}
%K_{y}(v\rho)&=
%\frac{1}{|W_t|}\sum_{w\in W}\vep(w)e_{wv\rho}(t)H(wv\rho)\\
%&=\frac{\vep(v)}{|W_t|}\sum_{w\in W}\vep(wv)e_{wv\rho}(t)H(wv\rho)\\
%&=
%\vep(v)K_{\rho+mQ}(\rho)
%=
%\vep(v)\De(t).
%\end{split}
%\]
%\qed

Set 
\[\check R_m^+=\{\check\al\in \check R^+:\ \la \rho,\check\al\ra =m\}
\]

\begin{prop}\label{Requal} Let $t\in T$ have $\Ad(t)$ of order $m$. Assume that 
\begin{itemize}
\item[\text(a)] $|R_t^+|=|\check R_m^+|$, and 
\item[\text(b)] $\mu\in P_{++}$ is $m$-principal.
\end{itemize} Then 
\begin{equation}\label{Requal2}
\tr(t,V_\mu)=\vep(v)\prod_{\check\al\in \check R_m^+}
\frac{\la v \mu,\check\al\ra}{\la \rho,\check\al\ra},
\end{equation}
where $v$ is any element of $W$ for which $v\mu\in \rho+mQ$. 
\end{prop}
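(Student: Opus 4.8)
The plan is to route everything through the single harmonic polynomial $K_y$ attached to the coset $y=\rho+mQ$, exploiting that this polynomial depends only on $y$ and that its degree is controlled by hypothesis (a). First I would record the inclusion $\check R_m^+\subseteq\check R_y^+$: recall that $\check R_y^+$ consists of the positive coroots $\check\al$ with $m\mid\la\rho,\check\al\ra$ (since $\rho\in y$), so $\la\rho,\check\al\ra=m$ certainly puts $\check\al$ in $\check R_y^+$. The one genuinely non-formal step is to promote this to an equality $\check R_y^+=\check R_m^+$. Here I would use hypothesis (a), $|R_t^+|=|\check R_m^+|$, together with the fact that $\rho$ lies in $y\cap P_{++}$ and that $V_\rho$ is the trivial representation, so $\tr(t,V_\rho)=1\neq 0$. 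Were the strict inequality $|R_t^+|<|\check R_y^+|$ to hold, Proposition~\ref{torsiontrace}(a) applied to $y$ would force $\tr(t,V_\rho)=0$, a contradiction; hence $|\check R_y^+|\le|R_t^+|=|\check R_m^+|\le|\check R_y^+|$ and therefore $\check R_y^+=\check R_m^+$.

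With the cardinalities matched, $|R_t^+|=|\check R_y^+|$, the proof of Proposition~\ref{torsiontrace}(b) shows that $K_y$, as a polynomial on $\ft^\ast$, is a scalar multiple of $\prod_{\check\al\in\check R_m^+}\check\al$; the degrees agree precisely because of (a). To determine the scalar I would evaluate at $\rho$. On one hand Lemma~\ref{yrho} with $v=1$ gives $K_y(\rho)=\De(t)$; on the other the product gives $\prod_{\check\al\in\check R_m^+}\la\rho,\check\al\ra=m^{|\check R_m^+|}$. Solving for the constant yields the normalized identity
\[
K_y(\lam)=\De(t)\prod_{\check\al\in\check R_m^+}\frac{\la\lam,\check\al\ra}{\la\rho,\check\al\ra},
\]
valid for every $\lam\in\ft^\ast$, in particular at points that need not be dominant.

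Finally I would transfer this to the coset $y_\mu$ of $\mu$. Since $\mu$ is $m$-principal, (b) furnishes $v\in W$ with $v\mu\in y$, so $y_\mu=v^{-1}\cdot y$. The index-shifting computation behind Lemma~\ref{yrho} gives the general equivariance $K_{u\cdot y}(\lam)=\vep(u)K_y(u^{-1}\lam)$; taking $u=v^{-1}$ and $\lam=\mu$ gives $K_{y_\mu}(\mu)=\vep(v)K_y(v\mu)$. Substituting into $\tr(t,V_\mu)=\De(t)^{-1}K_{y_\mu}(\mu)$ from \eqref{trQ} and inserting the normalized form of $K_y$ makes the two factors $\De(t)$ cancel, leaving exactly $\vep(v)\prod_{\check\al\in\check R_m^+}\la v\mu,\check\al\ra/\la\rho,\check\al\ra$, as claimed. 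The main obstacle is the first step: correctly seeing that hypothesis (a) forces $\check R_y^+$ to collapse onto $\check R_m^+$, for which the trivial-representation nonvanishing $\tr(t,V_\rho)=1$ is the essential lever. Everything after that is degree-matching and a single evaluation at $\rho$.
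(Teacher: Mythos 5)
Your proof is correct and follows the same overall route as the paper's: both express $\tr(t,V_\mu)$ as $K_y(\mu)/\De(t)$ via \eqref{trQ}, use the divisibility of $K_y$ by $\prod_{\check\al\in\check R_y^+}\check\al$ established in the proof of Proposition \ref{torsiontrace}, and pin down the remaining constant with Lemma \ref{yrho}. The differences are organizational: the paper works on the coset $y=\mu+mQ$, evaluates $K_y$ at $v^{-1}\rho$ to solve for $C_{t,y}$, and transports the index set to $\check R_m^+$ by the bijection $\check\al\mapsto\ep_\al\cdot v\check\al$; you instead normalize $K_{\rho+mQ}$ once by evaluating at $\rho$ and then pass to $\mu$'s coset through the equivariance $K_{uy}(\lam)=\vep(u)K_y(u^{-1}\lam)$, which does indeed follow from the computation underlying Lemma \ref{yrho}. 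One genuine merit of your version is worth noting: the identification $\check R_{\rho+mQ}^+=\check R_m^+$ --- i.e.\ that under the standing hypotheses no positive coroot pairs with $\rho$ to a multiple of $m$ larger than $m$ itself --- is exactly what the paper's bijection claim takes for granted, and your derivation of it from $\tr(t,V_\rho)=1$ for the trivial representation combined with the vanishing statement of Proposition \ref{torsiontrace}(a) is a clean and fully rigorous way to supply that missing justification.
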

\proof 
From equation \eqref{trQ} we have 
\[\tr(t,V_\mu)=\frac{K_y(\mu)}{ \De(t)}.\]
For each $\al\in\check R_y^+$ let $\ep_\al=\pm 1$ be such that $\ep_\al \cdot v\al\in \check R^+$. 
Sending $\al\mapsto \ep_\al \cdot v\al$ is a bijection 
$\check R_y^+\to \check R_m^+$, so we have 
$|R_t^+|=|\check R_y^+|$. Applying Lemma \ref{yrho} and equation \eqref{Cty}, we get 
\[
\vep(v)\De(t)=K_y(v\rho)
=C_{t,y}\prod_{\check\al\in \check R_y^+}
\la v^{-1} \rho,\check\al\ra=
C_{t,y}\prod_{\check\al\in \check R_y^+}
\la \rho,v\check\al\ra,
\]
so 
\[C_{t,y}=\frac{\vep(v)\De(t)}{\prod\limits_{\check\al\in \check R_y}\la \rho,v\check\al\ra}.\]
It follows that 
\[\begin{split}
\tr(t,V_\mu)=\vep(v)\prod_{\al\in\check R_y^+}
\frac{\la \mu,\check\al\ra}{\la \rho,v\check\al\ra}
=\vep(v)\prod_{\al\in\check R_y^+}
\frac{\la v\mu,\ep_\al \cdot v\check\al\ra}{\la \rho,\ep_\al \cdot v\check\al\ra}
=\vep(v)\prod_{\al\in\check R_m^+}
\frac{\la v\mu,\check\al\ra}{\la\rho,\check\al\ra}.
\end{split}
\]
\qed

{\bf Remark.\ } From 
Lemma \ref{Wstab} below, the stabilizer of $\rho+mQ$ in $P/mQ$ is the reflection subgroup  $W_m=\la r_\al:\ \check\al\in R_m^+\ra$. 
Since the polynomial
\[\prod_{\check\al\in R_m^+}\check\al\]
transforms under the sign character of $W_m$, it follows that the right side of \eqref{Requal2} is independent of the choice of $v$, as it must be.

The set
\[T_m:=\{t\in T:\ \text{$\Ad(t)$ has order $m$ and $|R_t|=|\check R_m|$}\}
\] 
may consist of several $W$-orbits. These must be separated by characters of $G$. However, Prop. \ref{Requal} implies that if $\mu\in P_{++}$ is $m$-principal then the character of $V_\mu$ is constant on $T_m$. 

Certain elements of $T_m$ are obtained as follows. Let $2\check\rho$ be the sum of the coroots $\check\al\in\check R^+$. If $\eta\in S^1$ has order $2m$, 
then the element $t=2\check \rho(\eta)$ belongs to $T_m$.  For such elements, Prop. \ref{Requal} gives the following character value. 

\begin{cor}\label{principal} \label{bothprincipal}
Let $m>1$ be an integer. Assume that $\mu+mQ$ is $m$-principal and that $t=2\check\rho(\eta)$, where $\eta\in S^1$ has order $2m$. Then we have 
\[\tr(t,V_\mu)=\vep(v)\prod_{\check\al\in \check R_m^+}
\frac{\la v\mu,\check\al\ra}{\la \rho,\check\al\ra},
\]
where $v\mu\in \rho+mQ$.
\end{cor}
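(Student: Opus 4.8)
The plan is to deduce the corollary from Proposition~\ref{Requal} by taking $t=2\check\rho(\eta)$. Hypothesis~(b) of that proposition---that $\mu$ is $m$-principal---is assumed, and the element $v$ with $v\mu\in\rho+mQ$ is exactly the one occurring in the corollary, so formula~\eqref{Requal2} reads off as the claimed identity verbatim. Thus the whole task is to check that this particular $t$ meets the standing hypothesis that $\Ad(t)$ has order $m$ together with hypothesis~(a), i.e.\ that $t\in T_m$; this membership is precisely the assertion recorded in the Remark, and the substance of the proof is to justify it.

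First I would evaluate the roots on $t$. Since the cocharacter $2\check\rho$ is the sum $\sum_{\check\al\in\check R^+}\check\al$, for any root $\al$ we have
\[
e_\al(t)=e_\al\bigl(2\check\rho(\eta)\bigr)=\eta^{\la\al,\,2\check\rho\ra}=\eta^{\,2\la\al,\check\rho\ra}.
\]
The pairing $\la\al,\check\rho\ra$ equals the height of $\al$, as one sees from $\la\al_i,\check\rho\ra=1$ for simple $\al_i$ (equivalently $r_{\al_i}\check\rho=\check\rho-\check\al_i$). Hence $e_\al(t)=\eta^{\,2\height(\al)}$, and because $\eta$ has order $2m$ two things follow at once: the simple roots give $e_{\al_i}(t)=\eta^{2}$, a primitive $m$th root of unity, so $\Ad(t)$ has order exactly $m$; and
\[
R_t^+=\{\al\in R^+:\ e_\al(t)=1\}=\{\al\in R^+:\ m\mid\height(\al)\}.
\]

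It remains to establish the cardinality identity $|R_t^+|=|\check R_m^+|$ defining membership in $T_m$. Here I read $\check R_m^+$ as the positive coroots whose reflections stabilize $\rho+mQ$, i.e.\ those $\check\al$ with $\la\rho,\check\al\ra=\height(\check\al)$ divisible by $m$; this is the coroot set attached to the stabilizer $W_m$ from the Remark, and it is the divisibility condition used in Theorem~\ref{ellreg}. The required equality then becomes
\[
\#\{\al\in R^+:\ m\mid\height(\al)\}=\#\{\check\al\in\check R^+:\ m\mid\height(\check\al)\},
\]
and this is the one genuinely nontrivial point, which I expect to be the main obstacle. I would settle it using the classical fact that, for each $k$, the number of positive roots of height $k$ is governed by the exponents of $W$; since $\wh G$ has the same Weyl group, and hence the same exponents, the positive coroots obey the identical height distribution, and summing over $k\in m\bz$ gives the displayed equality. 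With $t\in T_m$ confirmed, Proposition~\ref{Requal} applies and yields the stated formula.
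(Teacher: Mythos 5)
Correct, and essentially the paper's own route: the paper merely asserts that $t=2\check\rho(\eta)$ lies in $T_m$ and then invokes Proposition \ref{Requal}, while you supply the verification the paper omits, via $e_\al(t)=\eta^{2\height(\al)}$ and the exponent/height duality giving $\#\{\al\in R^+:\ m\mid\height(\al)\}=\#\{\check\al\in\check R^+:\ m\mid\height(\check\al)\}$. Your reading of $\check R_m^+$ as $\{\check\al\in\check R^+:\ \la\rho,\check\al\ra\in m\bz\}$, rather than the literal ``$=m$'' in the displayed definition, is the right one — it is what makes the statement consistent with the surrounding Remark, with Theorem \ref{ellreg}, and with the later $F_4$ computation.
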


\subsection{Dual groups and torsion elements} \label{dualtorsion}
It will be helpful to study the $W$-action on $P/mQ$ in terms of the dual group of $G$. 

The simply connected group  $G$ has root datum 
\[\scd(G)=(P,R,\check Q,\check R).\]
Let  $\wh G$ be a simple compact Lie group with dual root datum
\[\scd(\wh G)=(\check Q,\check R,P,R).\]
We fix a maximal torus $\wh T$ in $\wh G$ and identify $P$ with the lattice of one-parameter subgroups of $\wh T$.  

Let $\pi:\wh G_{sc}\to \wh G$ be the simply connected covering map. 
The group $\wh G_{sc}$ has root datum 
\[\scd(\wh G_{sc})=(\check P,\check R,Q,R),\]
where $\check P$ is the lattice of one-parameter subgroups in a maximal torus of the adjoint group of $G$. The kernel of $\pi$ is the center $\wh Z_{sc}$ of $\wh G_{sc}$ and we have a canonical isomorphism $\wh Z_{sc}\simeq P/Q$. The preimage torus $\wh T_{sc}=\pi^{-1}(\wh T)$ may be identified with 
 $Q\otimes S^1$ and the
exact sequence
 \begin{equation}\label{hatGsc}
 1\lra \wh Z_{sc}\lra \wh G_{sc}\overset{\pi}\lra \wh G\lra 1
 \end{equation}
restricts to a $W$-equivariant exact sequence on maximal tori:
 \begin{equation}\label{hatTsc}
 1\lra \wh Z_{sc}\lra \wh T_{sc}\overset{\pi}\lra\wh T\lra 1.
 \end{equation}
 
Now let $\zeta\in S^1$ have finite  order $m$. 
 Evaluation at $\zeta$ gives an isomorphism from  $P/mP$ to the torsion subgroup $\wh T[m]:=\{\tau\in \wh T:\ \tau^m=1\}$. The preimage
 $\wh T_{sc}[m]:=\pi^{-1}(\wh T[m])$ fits into
 the exact sequence 
\begin{equation}\label{PQT}
1\lra\wh Z_{sc}\lra \pi^{-1}(\wh T[m])\overset{\pi}{\lra} \wh T[m]\lra 1.
\end{equation}
This sequence is $W$-equivariantly isomorphic to the canonical exact sequence 
\begin{equation}\label{PQ}
0\lra P/Q\overset{m}{\lra} P/mQ\lra P/mP\lra 0.
\end{equation}
For $\mu\in P$, let $W_\mu=W_y$ be the stabilizer of the coset $y=\mu+mQ$ under the $W$-action on $P/mQ$, 
and let $\check R_\mu^+=\{\check\al\in \check R^+:\ \la\mu,\check\al\ra\in m\bz\}$.  This set of roots depends only on the coset $y=\mu+mQ$. We sometimes write 
\[\check R_y^+=\check R_\mu^+.\]

Let $\zeta\in\bc^\times$ have order $m$ as above. Then $\check R_\mu^+$ is a set of positive roots for he connected centralizer $C_{\wh G}^\circ(\mu(\zeta))$ and $W_\mu$ is the Weyl group of $C_{\wh G}^\circ(\mu(\zeta))$.

\begin{lemma}\label{Wstab} The group $W_\mu$ is generated by $\{r_\al: \check\al\in \check R_\mu^+\}$. 
\end{lemma}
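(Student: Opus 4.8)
The plan is to transport the statement to the dual group, where it becomes an assertion about the $W$-stabilizer of a semisimple element, and then to invoke connectedness of centralizers in a simply connected group.

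First I would use the $W$-equivariant identification of \eqref{PQ} with \eqref{PQT}. Under it the quotient $P/mQ$ is identified with $\wh T_{sc}[m]=\pi^{-1}(\wh T[m])$, and the coset $y=\mu+mQ$ corresponds to an element $\tilde\tau\in\wh T_{sc}$ lifting $\tau=\mu(\zeta)\in\wh T[m]$, where $\zeta$ has order $m$. Since the identification is $W$-equivariant, the stabilizer $W_\mu=W_y$ is carried to the centralizer $C_W(\tilde\tau)=\{w\in W:\ w\tilde\tau=\tilde\tau\}$, so it suffices to prove $C_W(\tilde\tau)=\la r_\al:\ \check\al\in\check R_\mu^+\ra$. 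To pin down the relevant roots, note that each $\check\al\in\check R$ is a root of $\wh G_{sc}$, hence trivial on the kernel $\wh Z_{sc}$ of $\pi$; therefore $\check\al(\tilde\tau)=\check\al(\tau)=\zeta^{\la\mu,\check\al\ra}$, which equals $1$ exactly when $\la\mu,\check\al\ra\in m\bz$, i.e. exactly when $\check\al\in\check R_\mu$. Thus $\check R_\mu$ is the root system of $C_{\wh G_{sc}}(\tilde\tau)$ relative to $\wh T_{sc}$, and the reflections $r_\al=r_{\check\al}$ with $\check\al\in\check R_\mu^+$ all centralize $\tilde\tau$; this gives the inclusion $\la r_\al:\ \check\al\in\check R_\mu^+\ra\subseteq C_W(\tilde\tau)$ for free.

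The heart of the argument is the reverse inclusion, and this is where simple connectedness enters. For any connected reductive group the $W$-stabilizer of a torus element $s$ fits into an exact sequence $1\to\la r_\al:\ \al(s)=1\ra\to \St(s)\to\pi_0(C(s))\to 1$, so the stabilizer equals the reflection subgroup precisely when the centralizer is connected. Because $\wh G_{sc}$ is simply connected, Steinberg's theorem guarantees that $C_{\wh G_{sc}}(\tilde\tau)$ is connected; hence $C_W(\tilde\tau)$ is the Weyl group of $C_{\wh G_{sc}}(\tilde\tau)$, which is generated by the reflections in its roots, namely $\la r_\al:\ \check\al\in\check R_\mu^+\ra$. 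I expect this connectedness input to be the main (indeed the only essential) obstacle: it is exactly what fails if one works instead with $P/mP$, $\wh T[m]$ and the non-simply-connected group $\wh G$, whose centralizers may have a nontrivial component group. This is precisely why the lemma is formulated with $mQ$, so that the relevant element lives in the simply connected cover.

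Finally I would point out a self-contained alternative that bypasses the dual group entirely. Realize $W_\mu$ as the linear part of the isotropy group of the point $\mu$ in the affine reflection group $W\ltimes mQ$, whose reflections are those in the hyperplanes $\{\lam:\ \la\lam,\check\al\ra=km\}$ for $\al\in R$, $k\in\bz$: indeed $w\in W_\mu$ iff the affine element $\lam\mapsto w\lam+(\mu-w\mu)$ lies in $W\ltimes mQ$ and fixes $\mu$. By the standard theorem that the stabilizer of a point in an affine reflection group is generated by the reflections through that point, this isotropy group is generated by the $r_{\al,k}$ with $\la\mu,\check\al\ra=km$, whose linear parts are exactly the $r_\al$ with $\check\al\in\check R_\mu^+$. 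Projecting to $W$ then yields the claim, with the point-stabilizer theorem playing the role that Steinberg's theorem plays in the dual-group route.
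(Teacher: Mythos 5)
Your main argument is exactly the paper's proof: the paper's one-line justification (``the isomorphism $P/mQ\simeq\pi^{-1}(\wh T[m])$ and the fact that $\wh G_{sc}$ is simply connected'') is precisely your transport to $\wh T_{sc}$ followed by Steinberg's connectedness theorem, which you have correctly identified and fleshed out. Your alternative via point stabilizers in the affine reflection group $W\ltimes mQ$ is also valid and self-contained, but it is a bonus rather than a departure, since your primary route coincides with the paper's.
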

\proof This follows from the isomorphism $P/mQ\simeq \pi^{-1}(\wh T[m])$ and the
fact that $\wh G_{sc}$ is simply connected. 
\qed

\section{Elliptic traces on zero weight spaces}\label{sec:elltrace}
We apply the results of the previous section to compute 
$\tr(w,V_\mu^T)$ when $w\in W$ is elliptic.  Ellipticity of $w$ is equivalent to any of the following:  
$T_w$ is finite;  $w$ fixes no nonzero element of $P$; all elements of the coset $w\subset N$ are $T$-conjugate; $w$ itself is contained in a single $G$-conjugacy class. 
The unique $G$-conjugacy class containing $w$  is denoted  by $w^G$.

One consequence of ellipticity is the following. 
\begin{lemma}\label{transitive} Assume $w\in W$ is elliptic.
 Let $t\in T\cap w^G$ and let $z$ belong to the center $Z$ of $G$. Then there exists $v\in W$ such that $t^v=tz$. 
 \end{lemma}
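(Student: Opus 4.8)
The plan is to reduce the statement to the standard fact that two elements of a maximal torus of a connected group which are conjugate in $G$ are already conjugate under $W$, and to supply the required $G$-conjugacy using ellipticity. Throughout I write $\sim_G$ for conjugacy in $G$.

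First I would fix a representative $n\in N$ of the coset $w$, so that $w=nT$ as a subset of $N$. Since $G$ is connected, its center $Z$ is contained in $T$, so for any $z\in Z$ the element $nz$ again lies in the coset $w=nT$. By ellipticity all elements of $w$ lie in the single $G$-conjugacy class $w^G$; in particular $nz\in w^G=n^G$, so $nz\sim_G n$. This is the only place ellipticity is used: it says that translating the coset representative by a central element does not move it out of its $G$-conjugacy class.

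Next I would use the hypothesis $t\in T\cap w^G$ to write $t=gng^{-1}$ for some $g\in G$. Because $z$ is central, conjugation commutes with right multiplication by $z$, so $tz=gng^{-1}z=g(nz)g^{-1}$, giving $tz\sim_G nz$. Chaining with the previous step yields
\[
tz\ \sim_G\ nz\ \sim_G\ n\ \sim_G\ t,
\]
so $t$ and $tz$ are two elements of $T$ that are conjugate in $G$.

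Finally I would invoke the standard result for connected compact $G$: elements of $T$ that are $G$-conjugate are conjugate by an element of $N$, hence by some $v\in W$. Applied to $t$ and $tz$ this produces $v\in W$ with $t^v=tz$, as required. I do not expect a genuine obstacle here; the only inputs beyond bookkeeping are the $G$-to-$W$ conjugacy principle in the last step and the identification $nz\in w$ from ellipticity, and the single point deserving care is keeping track of the direction of the conjugacies and using the centrality of $z$ to slide it across the conjugating element $g$.
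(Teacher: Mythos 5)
Your proof is correct and follows essentially the same route as the paper: both reduce to showing $t$ and $tz$ are $G$-conjugate elements of $T$ and then invoke the standard fact that $G$-conjugacy in $T$ implies $W$-conjugacy. The only cosmetic difference is that you use the characterization of ellipticity as ``the coset $w$ lies in a single $G$-conjugacy class'' to get $nz\sim_G n$ directly, whereas the paper uses the equivalent $T$-conjugacy statement to exhibit an explicit conjugating element; both characterizations are listed in the paper immediately before the lemma.
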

 \proof Since $t\in w^G$, there exist $g\in G$ and $n\in w$ such that $t=gng^{-1}$. 
 Since $z\in Z$ we also have $zt=gzng^{-1}$. 
But $zn\in w$ and $w$ is elliptic so there exists $s\in T$ such that 
$zn=sns^{-1}$. Hence $zt=gsns^{-1}g^{-1}=hth^{-1}$, where $h=gsg^{-1}$, 
so $zt$ and $t$ are $G$-conjugate elements of $T$. It follows that $zt$ and $t$ are $W$-conjugate, as claimed.
\qed

Another consequence is that, since an elliptic element $w\in W$ fixes no nonzero weight in $P$, we have
\[\tr(w,V_\mu^T)=\tr(w^G,V_\mu).\]
From equation \eqref{tr(t)} it follows that  \begin{equation}\label{elliptictrace}
\tr(w,V_\mu^T)=\frac{1}{\De(t)}\sum_{v\in W^1}\vep(v)e_{v\mu}(t)H(v\mu).
\end{equation}
for any $t\in T\cap w^G$. 

By \cite[Theorem 8.5]{springer:regular}, the character values  $\tr(w,V_\mu^T)$ are rational for all dominant regular weights $\mu$. 
From Proposition \ref{torsiontrace} we obtain

\begin{prop}\label{elltrace} Assume $w\in W$ is elliptic. Let $t\in T\cap w^G$ have $\Ad(t)$ of order $m$ and let $y\in P/mQ$.
\begin{itemize}
\item[(a)] If $|R_t|<|\check R_y|$ then $\tr(w,V_\mu^T)=0$ for all  $\mu\in y\cap P_{++}$.  
\item[(b)] If $|R_t|=|\check R_y|$ then there is a constant $C_{t,y}\in\bq$ such that for all $\mu\in y\cap P_{++}$ we have
\[\tr(w,V_\mu^T)=C_{t,y}\cdot\prod_{\check\al\in\check R_y}\la\mu,\check \al\ra.
\]
\end{itemize}
\end{prop}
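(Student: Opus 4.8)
The plan is to reduce the entire statement to Proposition~\ref{torsiontrace}, the analogous assertion for the torsion element $t$, and then to promote the resulting constant from $\bc$ to $\bq$ using Springer's rationality theorem. Because $w$ is elliptic it fixes no nonzero weight, so the computation leading to \eqref{elliptictrace} gives
\[
\tr(w,V_\mu^T)=\tr(w^G,V_\mu)=\tr(t,V_\mu)
\]
for every $t\in T\cap w^G$. Under this identification part~(a) here is exactly part~(a) of Proposition~\ref{torsiontrace} applied to $t$ --- the hypotheses match since $|R_t|<|\check R_y|$ is equivalent to $|R_t^+|<|\check R_y^+|$, both root systems being symmetric --- and the displayed formula in part~(b) is precisely the formula of Proposition~\ref{torsiontrace}(b), which a priori only yields a constant $C_{t,y}\in\bc$. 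Thus every assertion except the rationality of $C_{t,y}$ is immediate.

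To obtain $C_{t,y}\in\bq$, I would proceed as follows. If $y\cap P_{++}=\varnothing$ the claim in (b) is vacuous and we simply take $C_{t,y}=0$. Otherwise fix any $\mu\in y\cap P_{++}$; since $\mu$ is dominant regular we have $\la\mu,\check\al\ra\neq 0$ for every coroot, so $\prod_{\check\al\in\check R_y}\la\mu,\check\al\ra$ is a nonzero integer. By \cite[Theorem~8.5]{springer:regular} the elliptic character value $\tr(w,V_\mu^T)$ is rational, and therefore
\[
C_{t,y}=\frac{\tr(w,V_\mu^T)}{\prod_{\check\al\in\check R_y}\la\mu,\check\al\ra}\in\bq .
\]
The formula in part~(b), once known with a complex constant, guarantees that this ratio does not depend on the chosen $\mu$, so $C_{t,y}$ is a well-defined rational number.

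I expect no real obstacle here: the substance of the proposition is already carried by Proposition~\ref{torsiontrace} together with the elliptic reduction \eqref{elliptictrace}, and the one genuinely new ingredient is the rationality of elliptic character values quoted from \cite{springer:regular}. The only point demanding any care is the bookkeeping around the constant --- namely separating off the vacuous case $y\cap P_{++}=\varnothing$ and checking that the denominator is nonzero, so that $C_{t,y}$ is actually pinned down as an element of $\bq$ rather than merely $\bc$.
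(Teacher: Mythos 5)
Your proposal is correct and follows essentially the same route as the paper: the text immediately preceding the proposition records the elliptic reduction $\tr(w,V_\mu^T)=\tr(t,V_\mu)$ and Springer's rationality theorem, and then deduces the proposition directly from Proposition~\ref{torsiontrace}. Your extra bookkeeping (the vacuous case and the nonvanishing of the denominator) is sound but not needed beyond what the paper leaves implicit.
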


\subsection{Elliptic regular traces}\label{sec:ellreg}

We now assume $w\in W$ is elliptic and {\it regular}, and  
will prove Thm. \ref{ellreg} of the introduction. 
 
Let $\wh G$ be the dual group of $G$, as in section \ref{dualtorsion}. We need the following consequence of \cite{reeder:thomae}: Let  $\wh g\in \wh G$ have  order $m$. Then 
\begin{equation}\label{thomae} \dim C_{\wh G}(\wh g)\geq \frac{|\check R|}{m},
\end{equation}
 with equality if and only if $\wh g$ is $\wh G$-conjugate to $\rho(e^{2\pi i/m})$.

\begin{prop}\label{ellregtrace}
Assume $w\in W$ is elliptic and regular of order $m$ and let $t\in T\cap w^G$.  Then $\Ad(t)$ has order $m$ and for all $y\in P/mQ$ we have $|R_t^+|\leq |\check R_y^+|$, with equality if and only if $y$ is $m$-principal.
\end{prop}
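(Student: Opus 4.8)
The plan is to convert both sides of the inequality into centralizer dimensions, to compute $\dim C_G(t)$ exactly by combining regularity with ellipticity, and then to compare with the dual group through \eqref{thomae}.

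First I would record two translations. Since $G$ is simply connected, $C_G(t)$ is connected reductive with maximal torus $T$ and root system $R_t$, so $\dim C_G(t)=\dim T+2|R_t^+|$. On the dual side, section \ref{dualtorsion} identifies $\check R_y^+=\check R_\mu^+$ (with $y=\mu+mQ$) as a positive system for $C_{\wh G}^\circ(\mu(\zeta))$, where $\zeta=e^{2\pi i/m}$; hence $\dim C_{\wh G}(\mu(\zeta))=\dim T+2|\check R_y^+|$. Thus $|R_t^+|\le|\check R_y^+|$ is equivalent to $\dim C_G(t)\le\dim C_{\wh G}(\mu(\zeta))$, and the two cardinalities agree exactly when the two dimensions do.

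The crux is to show that $\Ad(t)$ has order $m$ and that $\dim C_G(t)=|R|/m$. For the order I would appeal to Springer's theory of regular elements together with \cite{reeder:thomae}: a regular $w$ of order $m$ has a representative $n\in N_G(T)$ with $n^m\in Z(G)$; since $t$ is $G$-conjugate to $n$, the operators $\Ad(t)$ and $\Ad(n)$ have the same order, and as $\Ad(n)|_\ft=w$ has order $m$ while $\Ad(n)^m=\Ad(n^m)=\id$, that order is exactly $m$. Granting this, I would average over $\langle\Ad(t)\rangle$:
\[\dim C_G(t)=\dim\fg^{\Ad(t)}=\frac{1}{m}\sum_{k=0}^{m-1}\tr\bigl(\Ad(t^k)\bigr).\]
The term $k=0$ contributes $\dim\fg=\dim T+|R|$. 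For $0<k<m$ regularity means $\langle w\rangle$ acts freely on $R$, so $w^k$ fixes no root; then $\Ad(n^k)$ permutes the root spaces $\{\fg_\al\}$ with no fixed point and $\tr\Ad(t^k)=\tr\Ad(n^k)=\tr(w^k|_\ft)$. Because $w$ is elliptic, $\ft^w=0$ and $\sum_{k=0}^{m-1}\tr(w^k|_\ft)=0$, so all the $\ft$-contributions cancel and only the root contribution at $k=0$ survives, yielding $\dim C_G(t)=|R|/m$. In particular $|R_t^+|=\tfrac12(|R|/m-\dim T)$ is a constant independent of $y$.

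Finally I would compare with the dual side. Applying \eqref{thomae} to $\mu(\zeta)$, whose order $m'$ divides $m$, gives $\dim C_{\wh G}(\mu(\zeta))\ge|\check R|/m'\ge|\check R|/m=|R|/m=\dim C_G(t)$, which is the asserted inequality. One direction of the equality statement is then immediate: if $v\mu\in\rho+mQ$ then $\langle v\mu,\check\al\rangle\equiv\langle\rho,\check\al\rangle\pmod m$ for every coroot, so $\check R_{v\mu}^+=\check R_\rho^+$; since $|\check R_y^+|=|\check R_{v\mu}^+|$ and $\rho(\zeta)$ realizes the minimum $\dim C_{\wh G}(\rho(\zeta))=|\check R|/m$, we get $|\check R_y^+|=|\check R_\rho^+|=|R_t^+|$. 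The converse is where I expect the genuine difficulty. Equality forces, by the equality clause of \eqref{thomae}, that $\mu(\zeta)$ be $\wh G$-conjugate to $\rho(\zeta)$, i.e. $v\mu\equiv\rho\pmod{mP}$ for some $v\in W$; I must sharpen this to the congruence modulo $mQ$ that defines $m$-principality. The discrepancy lives in $mP/mQ\cong P/Q\cong\wh Z_{sc}$, and I would eliminate it by passing to $\wh G_{sc}$ and using the $W$-equivariant sequence \eqref{PQ}. Writing $\mathrm{Stab}_W(\rho(\zeta))$ for the stabilizer, whose subgroup fixing $\rho$ modulo $mQ$ is exactly $W_\rho=\langle r_\al:\ \check\al\in\check R_\rho^+\rangle$ by Lemma \ref{Wstab}, the required step is that $\mathrm{Stab}_W(\rho(\zeta))$ surject onto $\wh Z_{sc}$, equivalently that the component group $\pi_0\,C_{\wh G}(\rho(\zeta))$ fill $\wh Z_{sc}$. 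This surjectivity, which is precisely what allows $mP$ to be traded for $mQ$, is the main obstacle, and it is here that I would rely on \cite{reeder:thomae}.
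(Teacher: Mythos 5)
Your argument tracks the paper's proof closely through the inequality and the forward implication. The identity $\dim C_G(t)=|R|/m$ (which you obtain by averaging $\tr\Ad(t^k)$ over $\la \Ad(t)\ra$ --- a spelled-out version of the paper's observation that $\la n\ra$ permutes the root spaces freely and has no nonzero invariants in $\ft$), the comparison with $\dim C_{\wh G}(\hat y)$ via \eqref{thomae}, and the equal-rank conversion back to root counts are all exactly as in the paper. Two minor differences: the paper fixes the concrete representative $t=2\check\rho(\eta)$ and gets both $t\in w^G$ and the order of $\Ad(t)$ from \cite[Prop.~8]{rlyg}, rather than from an abstract lift $n$ with $n^m\in Z(G)$ (a fact you assert but do not really source); and for the forward direction the paper invokes \cite{rlyg} a second time to place $\hat y$ in $w^{\wh G}$, whereas you apply the equality clause of \eqref{thomae} to $\rho(\zeta)$ directly --- that shortcut is legitimate.

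The genuine gap is the final step of the converse, which you correctly isolate but neither prove nor attribute correctly. The transitivity of $W_{\hat y}=\mathrm{Stab}_W(\rho(\zeta))$ on the fiber of $P/mQ\to P/mP$ above $\rho+mP$ is not taken from \cite{reeder:thomae}; it is Lemma \ref{transitive} of this paper, applied to $\wh G_{sc}$, and it uses only \emph{ellipticity}: since $w$ is elliptic, the coset $w\subset N$ is a single $T$-conjugacy class, so for a lift $\tilde y\in\wh T_{sc}\cap w^{\wh G_{sc}}$ of $\hat y$ (such a lift exists because the equality clause of \eqref{thomae} puts $\hat y$ in $w^{\wh G}$) and any $z\in\wh Z_{sc}$, writing $\tilde y=gng^{-1}$ and noting that $zn\in w$ is $\wh T_{sc}$-conjugate to $n$ shows $z\tilde y$ is $\wh G_{sc}$-conjugate, hence $W$-conjugate, to $\tilde y$; the conjugating Weyl element fixes $\hat y$ and carries $\tilde y$ to $z\tilde y$. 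Without this lemma your argument only shows that $y$ is $W$-conjugate to $\rho+mQ$ up to the ambiguity $mP/mQ\simeq\wh Z_{sc}$, i.e.\ $m$-principality ``modulo the center,'' which is strictly weaker than the claim. So the missing ingredient is precisely the elementary ellipticity argument of Lemma \ref{transitive}, not anything from \cite{reeder:thomae}.
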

\proof It suffices to prove the result for one $t\in T\cap w^G$. 
Let $t=2\check\rho(\eta)$, where $\eta\in\bc^\times$ has order $2m$. 
Then $\Ad(t)$ has order $m$ and from 
 \cite[Prop.8]{rlyg} we have
$ t\in w^G$. 

Since $\Ad(t)$ has order $m$ it follows that $\Ad(g)$ has order $m$ for every $g\in w^G$.
Let $n\in w$. Since $w$ is elliptic regular and $\Ad(n)$ has order $m$, the group $\la n\ra$ freely permutes the root spaces of $T$ in $\fg$ and have no nonzero invariants in $\ft$. Therefore we have
\[\dim C_G(t)=\dim C_G(n)=\dfrac{|R|}{m}.\]

Let $\zeta\in S^1$ have order $m$. Given any $\nu\in P$, the element $\nu(\zeta)\in \wh T[m]$ depends only on the coset $y=\nu+mP\in P/mP$. As in \eqref{PQ}, we have a $W$-equivariant isomorphism 
\[P/mP\to \wh T[m], \qquad y\mapsto \hat y=\nu(\zeta),\quad\text{for any $\nu\in y$}.\]

Now take $y\in P/mQ$ and let $k$ be the order of $y+mP$ in the group $P/mP$. Clearly $k$ divides $m$. Applying  \eqref{thomae} gives the first inequality in the following. 
\begin{equation}\label{ineq}
\dim C_{\wh G}(\hat y)\geq \frac{|\check R|}{k}\geq \frac{|\check R|}{m}
=\frac{|R|}{m}=\dim C_G(t).
\end{equation}
Since $G$ and $\wh G$ have the same rank, we obtain the inequality 
$|R_t|\leq |\check R_y|$. 

If $y= v\rho+mQ$ for some $v\in W$, 
then applying \cite[Prop. 8]{rlyg} to $\wh G$ shows that 
$\hat y\in w^{\wh G}$. Since $w$ is elliptic regular of order $m$, we have
 $\dim C_{\wh G}(\hat y)=|\check R|/m$. 
 Now \eqref{ineq} implies that $|R_t|=|\check R_y|$. 

Conversely, if $|R_t|= |\check R_y|$ then both inequalities in \eqref{ineq} become equalities. Thus, $k=m$ and 
$\dim C_{\wh G}(\hat y)=|\check R|/m$. The condition for equality in \eqref{thomae} implies that $\hat y\in w^{\wh G}$.  It follows that $\hat y$ is $W$-conjugate to $\rho(\zeta)$, which means the image $y'$ of $y$ in $P/mP$ is $W$-conjugate to $ \rho+mP$. We may therefore assume $y'=\rho+mP$ and it remains 
to prove that $y$ is $W$-conjugate to $ \rho+mQ$. 

In the exact sequence \eqref{PQT}
\[1\lra\wh Z_{sc}\lra \pi^{-1}(\wh T[m])\overset{\pi}{\lra} \wh T[m]\lra 1, \]
the stabilizer $W_{\hat y}=\{v\in W:\ \hat y^v=\hat y\}$
acts on the fiber $\pi^{-1}(\hat y)$. Since $w$ is elliptic, this action of $W_{\hat y}$  on $\pi^{-1}(\hat y)$ is transitive, by Lemma \ref{transitive} applied to the group $\wh G_{sc}$.
Passing to the exact sequence \eqref{PQ}
\begin{equation}
0\lra P/Q\overset{m}{\lra} P/mQ\lra P/mP\lra 0,
\end{equation}
 this means that $W_{\hat y}$ acts transitively on the fiber in $P/mQ$
above $\rho+mP$. Since $y$ and $\rho+mQ$ belong to this fiber, the proof is complete. 
\qed

Combining Cor. \ref{principal}, Prop. \ref{elltrace} (a)  and Prop. \ref{ellregtrace} we have proved Theorem \ref{ellreg} in the introduction, restated here. 

\begin{thm}\label{thm:ellreg} Let $w\in W$ be elliptic and regular of order $m$ and let $y\in P/mQ$. 
\begin{enumerate}
\item If $y$ is not in the $W$-orbit of $\rho+mQ$ then 
for all  $\mu\in y\cap P_{++}$ we have 
\[\tr(w,V_\mu^T)=0.\]
\item If $vy=\rho+mQ$ for some $v\in W$ then for all 
$\mu\in y\cap P_{++}$ we have 
\begin{equation}\label{eq:ellreg}\tr(w,V_\mu^T)=\vep(v)\prod_{\check\al\in\check R_m^+}\dfrac{\la v\mu,\check\al\ra}{\la \rho,\check\al\ra}.
\end{equation}

\end{enumerate}
\end{thm}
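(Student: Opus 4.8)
The plan is to assemble the statement directly from three ingredients already in hand: the reduction of Weyl-group traces to conjugacy-class traces in $G$, the rank inequality of Proposition \ref{ellregtrace}, and the monomial character value of Corollary \ref{principal}. First I would fix once and for all the concrete torsion element used throughout: set $t = 2\check\rho(\eta)$ with $\eta \in S^1$ of order $2m$. By \cite[Prop.~8]{rlyg} this $t$ lies in $T \cap w^G$ and $\Ad(t)$ has order $m$, and by the remark preceding Corollary \ref{principal} it lies in $T_m$, so that $|R_t^+| = |\check R_m^+|$. Because $w$ is elliptic it fixes no nonzero weight, whence $\tr(w, V_\mu^T) = \tr(w^G, V_\mu) = \tr(t, V_\mu)$; this identity is precisely what transfers the two $G$-side computations to the Weyl side.

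Next I would record the bridge between the case hypotheses and the invariant $|\check R_y^+|$. By definition a coset $y \in P/mQ$ is $m$-principal exactly when its $W$-orbit contains $\rho + mQ$, i.e.\ exactly when some $v \in W$ satisfies $vy = \rho + mQ$. With $t$ as above, Proposition \ref{ellregtrace} gives $|R_t^+| \le |\check R_y^+|$ for every $y$, with equality if and only if $y$ is $m$-principal. Thus case (1) --- $y$ not in the $W$-orbit of $\rho+mQ$ --- is precisely the strict-inequality case $|R_t^+| < |\check R_y^+|$, and Proposition \ref{elltrace}(a) then forces $\tr(w, V_\mu^T) = 0$ for all $\mu \in y \cap P_{++}$.

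For case (2), suppose $vy = \rho + mQ$, equivalently $v\mu \in \rho + mQ$ for $\mu \in y$, so that $\mu$ is $m$-principal. All hypotheses of Corollary \ref{principal} now hold for our $t$: here $m > 1$, the coset $\mu + mQ$ is $m$-principal, and $t = 2\check\rho(\eta)$ with $\eta$ of order $2m$. Applying the corollary gives
\[
\tr(t, V_\mu) = \vep(v) \prod_{\check\al \in \check R_m^+} \frac{\la v\mu, \check\al \ra}{\la \rho, \check\al \ra},
\]
and combining this with $\tr(w, V_\mu^T) = \tr(t, V_\mu)$ yields the stated formula \eqref{eq:ellreg}.

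The genuine content has already been spent inside the cited results --- in particular the equality clause of Proposition \ref{ellregtrace} rests on the inequality \eqref{thomae} from \cite{reeder:thomae} together with the transitivity Lemma \ref{transitive} --- so the only point needing care in the assembly is the bookkeeping that identifies ``$m$-principal'', ``$vy = \rho+mQ$'', and ``equality in $|R_t^+| \le |\check R_y^+|$'' as one and the same condition, and the check that the single element $t = 2\check\rho(\eta)$ simultaneously meets the hypotheses of Propositions \ref{elltrace} and \ref{ellregtrace} and of Corollary \ref{principal}. I expect no obstacle beyond this matching; the well-definedness of the right-hand side of \eqref{eq:ellreg} --- its independence of the choice of $v$ --- is already settled by the remark following Proposition \ref{Requal}.
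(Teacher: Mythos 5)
Your proposal is correct and follows essentially the same route as the paper, whose proof of Theorem \ref{thm:ellreg} is precisely the one-line assembly of Corollary \ref{principal}, Proposition \ref{elltrace}(a), and Proposition \ref{ellregtrace}; you have simply made explicit the bookkeeping (the choice $t=2\check\rho(\eta)$, the identification of ``$m$-principal'' with the equality case, and the transfer $\tr(w,V_\mu^T)=\tr(t,V_\mu)$) that the paper leaves implicit.
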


{\bf Remark.\ } We have observed that the polynomial $\prod_{\check\al\in\check R_m^+}\check\al$ transforms by the sign character under the reflection subgroup $W_m=\la r_\al:\check\al\in\check R_m^+\ra$, so that 
the product 
\[\vep(v)\prod_{\check\al\in\check R_m^+}\la v\mu,\check\al\ra\]
is independent of the choice of $v\in W$ for which $v\mu\in \rho+mQ$. In fact there is a unique such $v$ for which $v^{-1}\check R_m^+\subset \check R^+$. 
With this choice,  \eqref{eq:ellreg} becomes
\begin{equation}\label{eq:ellreg2}\tr(w,V_\mu^T)=\vep(v)\prod_{\check\al\in\check R_\mu^+}\dfrac{\la \mu,\check\al\ra}{\la \rho,\check\al\ra}.
\end{equation}
where 
$\check R_\mu^+=\{\check\al\in\check R^+:\ \la \mu,\check\al\ra\in m\bz\}$.
Version \eqref{eq:ellreg2} has the computational advantage that each term in the product is positive. 

\section{A general character formula for zero weight spaces}\label{sec:general}

The general formula for $\tr(w,V_\mu^T)$ interpolates between the cases where $\dim T_w=0$ and $T_w=T$.

\subsection{The Weyl Character Formula and subtori}
\label{sec:wcf}
We return to the Weyl Character Formula, now to  
analyze its restriction to  cosets of subtori in $T$. At the moment there are no further specifications on these cosets. 

Fix an element $t\in T$ and a subtorus $S\subset T$. 
We recall more notation from the introduction: The set of positive roots $R^+$ is partitioned as 
\[R^+=R_{tS}^+\sqcup R^{tS}_1\sqcup R^{tS}_2,\]
where 
\[\begin{split}
R_{tS}^+&=\{\al\in R^+:\ e_\al\equiv 1\ \text{on $tS$}\}, \\
R^{tS}_1&=\{\al\in R^+:\ e_\al\equiv e_\al(t)\neq 1\ \text{on $tS$}\},\\ 
R^{tS}_2&=\{\al\in R^+:\ e_\al \ \text{is nonconstant on $tS$}\}.
\end{split}
\]
Also, we set
\[\rho_{tS}=\frac{1}{2}\sum\limits_{\al\in R_{tS}^+}\al,\quad
H_{tS}(\lam)=\prod\limits_{\al\in R_{tS}^+}\dfrac{\la \lam,\check\al\ra}{\la\rho_{tS},\check\al\ra},\quad 
W^{tS}=\{v\in W:\ v^{-1}R_{tS}^+\subset R^+\}
\]
\[\De=e_\rho\prod_{\al\in R^{tS}_1}(1-e_{-\al}),\qquad
S_0=\{s\in S:\ e_\al(ts)\neq 1\quad \forall \al\in R^{tS}_2\}.\]
Let $W_{tS}$ be the subgroup of $W$ generated by $\{r_\al:\ \al\in R_{tS}^+\}$. The product mapping 
$W_{tS}\times W^{tS}\to W$ is a bijection. 

\begin{lemma}\label{singularweyl} Let $tS$ be a coset of the subtorus $S\subset T$. For all $\mu\in P_{++}$ and $s\in S_0$ we have
\[\tr(ts,V_\mu)=
\sum_{v\in W^{tS}}
\frac{\vep(v)\cdot H_{tS}(v\mu)}
{\prod\limits_{\al\in R_1^{tS}}(1-e_{-\al}(t))}\cdot \frac{e_{v\mu-\rho}(ts)}
{\prod\limits_{\be\in R_2^{tS}}(1-e_{-\be}(ts))}.
\]
\end{lemma}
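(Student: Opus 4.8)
The plan is to evaluate the Weyl character formula $\ch V_\mu = A_\mu/A_\rho$, where $A_\lam = \sum_{w\in W}\vep(w)e_{w\lam}$, directly at $ts$. The obstruction is that $A_\mu$ and the Weyl denominator $A_\rho = e_\rho\prod_{\al\in R^+}(1-e_{-\al})$ both vanish at $ts$, the common zero coming precisely from the roots in $R_{tS}^+$: the roots in $R_1^{tS}$ contribute factors $1-e_{-\al}(t)\neq 0$, and those in $R_2^{tS}$ are harmless because $s\in S_0$. I would resolve this $0/0$ by a limiting argument: choose a one-parameter subgroup $\gamma$ of $T$ whose derivative is regular for the subsystem $R_{tS}$, so that $ts\,\gamma(\ep)$ is a regular element of $T$ for all small $\ep\neq 0$, apply the honest Weyl character formula there, and let $\ep\to 0$, using continuity of $\ch V_\mu$ to recover $\tr(ts,V_\mu)$ on the left.

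The structural input that makes the right-hand side organize itself is that $L:=C_G(ts)$ is connected (as $G$ is simply connected) with maximal torus $T$, root system $R_{tS}$, and Weyl group $W_{tS}$, and that $ts\in Z(L)$, so $W_{tS}$ fixes $ts$. Writing $r=\gamma(\ep)$ and splitting the sum over $W$ via the bijection $W_{tS}\times W^{tS}\to W$, the inner sum over $u\in W_{tS}$ in the numerator becomes $e_{v\mu}(ts)\sum_{u\in W_{tS}}\vep(u)e_{u(v\mu)}(r)$: here I use $u^{-1}(tsr)=ts\cdot u^{-1}(r)$ together with $W_{tS}$ fixing $ts$ to pull the scalar $e_{v\mu}(ts)$ out, leaving the Weyl numerator $A^L_{v\mu}(r)$ of $L$. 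In the denominator I factor $\prod_{R^+}(1-e_{-\al})$ over the three subsets; the $R_{tS}^+$-part equals $e_{-\rho_{tS}}(r)A^L_{\rho_{tS}}(r)$, while the $R_1^{tS}$- and $R_2^{tS}$-parts converge to $\prod_{\al\in R_1^{tS}}(1-e_{-\al}(t))$ and $\prod_{\be\in R_2^{tS}}(1-e_{-\be}(ts))$. Numerator and denominator now share the common vanishing factor: for each $v$ the ratio $A^L_{v\mu}(r)/A^L_{\rho_{tS}}(r)$ is an $L$-character value, and as $r\to 1$ it tends to the Weyl dimension monomial $\prod_{\al\in R_{tS}^+}\la v\mu,\check\al\ra/\la\rho_{tS},\check\al\ra = H_{tS}(v\mu)$, where $v\mu$ is $R_{tS}$-regular because $\mu$ is regular. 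Assembling the pieces and using $e_{v\mu}(ts)/e_\rho(ts)=e_{v\mu-\rho}(ts)$ gives exactly the asserted formula.

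The main obstacle is the correct bookkeeping of the $0/0$ cancellation: one must check that the numerator's vanishing (through each $A^L_{v\mu}(r)\to 0$) matches the denominator's vanishing (through $A^L_{\rho_{tS}}(r)\to 0$) term by term, which is precisely what the factorization via $W_{tS}\times W^{tS}$ achieves, and then identify the surviving ratio as the Weyl dimension formula for $L=C_G(ts)$. The only genuinely non-formal step is this last limit, which is the standard derivation of the dimension formula from the character formula (an $\ep\to 0$, L'Hôpital-type computation); everything else rests on the structural observation that $ts$ is central in $L$, so that $W_{tS}$ fixes it and the scalar $e_{v\mu}(ts)$ can be extracted cleanly.
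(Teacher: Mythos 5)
Your proposal is correct and follows essentially the same route as the paper: factor the Weyl sum through the bijection $W_{tS}\times W^{tS}\to W$, pull out $e_{v\mu}(ts)$ using that $W_{tS}$ fixes $ts$ (equivalently that $R_{tS}$ is trivial on $ts$), perturb $ts$ toward regular elements, and identify the surviving $0/0$ ratio via the Weyl dimension formula for the centralizer. The only detail the paper handles that you gloss over is passing to the covering torus with character lattice $\tfrac{1}{2}P$ so that $e_{\rho_{tS}}$ and the half-root factors $e_{\al/2}-e_{-\al/2}$ make sense (with $e_{\rho_{tS}}(x)=\pm1$ on the lift of $ts$), but this is a routine fix.
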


\proof Let $T'$ be the covering torus of $T$ with character group $\frac{1}{2}P$ (cf. \cite[VI.3.3]{bour456}). 

We again write  $e_\lam:T'\to S^1$ for the character of $T'$ corresponding to $\lam\in \frac{1}{2}P$. 

The Weyl group $W$ acts on $T'$ via its action on $\frac{1}{2}P$, and $W$ acts on the character ring $\bc[T']$ as $(w\cdot f)(t)=f(t^w)$. 
Let $A$ and $A_{tS}$ be the operators on $\bc[T']$ given by 
\[A(f)=\sum_{w\in W}\vep(w)w\cdot f, \qquad  \qquad
A_{tS}(f)=\sum_{w\in W_{tS}}\vep(w)w\cdot f.\]
Then 
\[A(f)=\sum_{v\in W^{tS}}\sum_{u\in W_{tS}}\vep(uv)(uv\cdot f)=
\sum_{v\in W^{tS}}\vep(v)A_{tS}(v\cdot f).\]
In particular for $\mu\in P$ we have 
\begin{equation}\label{A}
A(e_\mu)=\sum_{v\in W^{tS}}\vep(v)A_{tS}(e_{v\mu}).
\end{equation}

When restricted to $T$, the character $\chi_\mu$ of $V_\mu$  may be regarded as a function on $T'$, via the covering map $T'\to T$. It is given by the traditional expression of 
Weyl's Character Formula:
\begin{equation}\label{wcf1}
\chi_\mu=\frac{A(e_{\mu})}{A(e_\rho)}.
\end{equation}
On the right side of \eqref{wcf1} the numerator and denominator are functions on $T'$ whose zeros must be cancelled in order to evaluate $\chi_\mu$. 

From Weyl's identity (applied to both $W$ and $W_{tS}$), we have 
\begin{equation}\label{D}A(e_\rho)=D^{tS}\cdot D_{tS}=D^{tS}\cdot A_{tS}(e_{\rho_{tS}}),
\end{equation}
where
\[
D_{tS}=\prod_{\al\in R_{tS}^+}\left(e_{\al/2}-e_{-\al/2}\right),\qquad 
D^{tS}=\prod_{\al\in R_{1}^{tS}\sqcup R_2^{tS}}\left(e_{\al/2}-e_{-\al/2}\right),\qquad 
e_{\rho_{tS}}=\prod_{\al\in R_{tS}^+}e_{\al/2}
\] 
are all elements of $\bc[T']$. 

Recall $t\in T$ has been fixed. Now we also fix $s\in S_0$ and let $x\in T'$ be a lift of $ts$. 
Also let $z\in T'$ be arbitrary. For all $u\in W_{tS}$,  $v\in W^{tS}$ and $\mu\in \frac{1}{2}P$ we have
\[e_{uv\mu}(xz)=e_{v\mu}(x)\cdot e_{uv\mu}(z).
\]
It follows that 
\[A_{tS}(e_{v\mu})(xz)=e_{v\mu}(x)\cdot A_{tS}(e_{v\mu})(z).
\]
From \eqref{D} we have
\[A(e_\rho)(xz)=D^{tS}(xz)\cdot D_{tS}(xz).\]
For all $\al\in R_{tS}^+$ we have $e_\al(st)=1$, so 
$e_{\al/2}(x)=\pm 1$. It follows that   $e_{\rho_{tS}}(x)=\pm 1$ and that 
\[D_{tS}(xz)=e_{\rho_{tS}}(x)\cdot D_{tS}(z)=e_{\rho_{tS}}(x)A_{tS}(e_{\rho_{tS}})(z),\]
so 
\begin{equation}\label{wcf2}
A(e_\rho)(xz)=e_{\rho_{tS}}(x)\cdot D^{tS}(xz)\cdot A_{tS}(e_{\rho_{tS}})(z).
\end{equation}

So far $x$ is fixed but we have made no restriction on $z$. Since the set of elements in $T'$ on which no root $=1$ is dense in $T'$, we can choose a sequence $z_n\to 1$ in $T'$ such that $e_\al(xz_n)\neq 1$ for all $n$ and all $\al\in R$. 

From \eqref{wcf1} and \eqref{wcf2} we have that 
\[\chi_\mu(xz_n)=\frac{1}{e_{\rho_{tS}}(x)\cdot D^{tS}(xz_n)}
\sum_{v\in W^{tS}}\vep(v)e_{v\mu}(x) \frac{A_{tS}(e_{v\mu})}{A_{tS}(e_{\rho_{tS}})}(z_n).
\]
 Letting $n\to\infty$ we get, from the Weyl dimension formula applied to the (connected) centralizer  $C_L(t)$, where $L=C_G(S)$,  that 
\[\chi_\mu(x)=\frac{1}{e_{\rho_{tS}}(x)\cdot D^{tS}(x)}
\sum_{v\in W^{tS}}\vep(v) H_{tS}(v\mu) e_{v\mu}(x).
\]
Since
\[e_{\rho_{tS}}\cdot D^{tS}=e_{\rho}\cdot 
\prod_{\al\in R_1^{tS}\sqcup R_2^{tS}}(1-e_{-\al}),\]
it follows that 
\[\chi_\mu(x)=\sum\limits_{v\in W^{tS}}
\frac
{\vep(v)H_{tS}(v\mu)}{ \prod\limits_{\al\in R_1^{tS}\sqcup R_2^{tS}}(1-e_{-\al}(x))} e_{v\mu-\rho}(x).
\]
Since $\mu$ and all $v\mu-\rho$ lie in $P$ we may replace $x$ by its projection $st\in T$.
Decomposing the product according to $R_1^{tS}\sqcup R_2^{tS}$
we obtain the formula in Lemma \ref{singularweyl} . 
\qed

\subsection{Characters and constant terms}
\label{sec:charconst}

Given $w\in W$, let $S=(T_w)^\circ$ be the identity component of the fixed-point group $T_w:=\{t\in T: t^w=t\}$, and let $Y$ be the character lattice of $S$. 

The rank of $Y$ equals the rank of the character lattice $P_w=\{\lam\in P:\ w\lam=\lam\}$ of the quotient torus $T/[w,T]$, where $[T,w]=\{t^w\cdot t^{-1}:\ t\in T\}$.  

For $s\in S\cap [T,w]$  one checks that $s^m=1$, where $m$ is the order of $w$. This implies that the composite mapping
\begin{equation}\label{isogeny}
S\hra T\lra T/[T,w]
\end{equation}
has finite kernel, hence is surjective.  It follows that the restriction map $P\to Y$ is injective on $P_w$.  

Recall that we regard $w$ as a subset of $N$.
Fix $n\in w$. 
From the surjectivity of \eqref{isogeny} it follows that every element of $w$ is $T$-conjugate to an element of $nS$. Indeed, given any $t\in T$, there exist $s\in S$ and $z\in T$ such that $s=(z^w\cdot z^{-1})\cdot t$, so we have 
\[z(nt)z^{-1}=n(z^w\cdot z^{-1})t=ns.\]

Let $\bc[S]$ be the character ring of $S$. Every element $\vp\in \bc[S]$ uniquely expressed as a
finite linear combination
\[\vp=\sum_{\nu\in Y} \hat\vp(\nu) e_\nu\]
with coefficients $\hat\vp(\nu)\in \bc$. The {\it constant-term functional} $\int_S:\bc[S]\to\bc$ is given by 
\[\int_S \vp(s)\ ds=\hat\vp(0).\]

\begin{lemma}\label{constant} Let $V$ be a finite-dimensional representation of $G$ and let $w\in W$. Then the trace of $w$ on the zero weight space $V^T$ is given by
\[\tr(w,V^T)=\int\limits_S\tr(ns,V)\ ds,\]
for any choice of $n\in w$. 
\end{lemma}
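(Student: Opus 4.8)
The plan is to compute the function $s\mapsto \tr(ns,V)$ on $S$ directly from the weight-space decomposition of $V$ and then read off its constant term. First I would fix $n\in w$ and note that, since $T$ acts trivially on the zero weight space $V^T=V_0$, every element of the coset $w=nT$ induces the same operator on $V^T$; consequently $\tr(w,V^T)=\tr(n,V_0)$, and it suffices to identify the right-hand side of the asserted formula with $\tr(n,V_0)$.

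Next I would decompose $V=\bigoplus_\lambda V_\lambda$ into $T$-weight spaces. For $s\in S\subset T$ the operator $ns$ carries $V_\lambda$ into $V_{w\lambda}$, acting by the scalar $e_\lambda(s)$ followed by the linear map induced by $n$. Only the diagonal blocks contribute to the trace, namely those $\lambda$ with $w\lambda=\lambda$, on each of which $n$ preserves $V_\lambda$. Hence, as a function on $S$,
\[
\tr(ns,V)=\sum_{\lambda\in P_w}\tr(n,V_\lambda)\,e_\lambda(s),
\]
where each $e_\lambda$ is understood as the restriction of $e_\lambda$ to $S$.

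Finally I would apply the constant-term functional $\int_S$. Grouping the displayed sum by the restricted character $\lambda|_S\in Y$, the functional $\int_S$ extracts exactly those $\lambda\in P_w$ whose restriction to $S$ is trivial. The crucial input is the injectivity of the restriction map $P\to Y$ on $P_w$, proved just above via the surjectivity of the isogeny $S\hra T\to T/[T,w]$: it forces $\lambda=0$ to be the unique surviving weight. This yields $\int_S\tr(ns,V)\,ds=\tr(n,V_0)=\tr(w,V^T)$, independently of the choice of $n$, as claimed.

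The step demanding the most care is the middle one: verifying that, after restriction to $S$, the trace collapses onto the $w$-fixed weights and that $n$ genuinely preserves each such weight space, so that the $\lambda=0$ term is precisely the $W$-action on $V^T$. Everything else is a formal consequence of the injectivity statement already established in the paragraph preceding the lemma.
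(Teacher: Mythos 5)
Your argument is correct and is essentially the paper's own proof: both decompose $V$ into $T$-weight spaces, observe that $n$ sends $V^\lam$ to $V^{w\lam}$ so only $w$-fixed weights contribute to $\tr(ns,V)=\sum_{\lam\in P_w}\tr(n,V^\lam)e_\lam(s)$, and then invoke the injectivity of the restriction $P\to Y$ on $P_w$ to see that the constant term isolates $\lam=0$. No substantive differences.
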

\proof For $\lam\in P$, let $V^\lam=\{v\in V:\ tv=e_\lam(t)v\ \forall t\in T\}$.  We have $nV^\lam=V^{w\lam}$. It follows that for all $s\in S$ we have
\[\tr(ns,V)=\sum_{\lam\in P_w}\tr(ns, V^\lam)=
\sum_{\lam\in P_w}\tr(n, V^\lam)\cdot e_\lam(s).
\]
Since  the restriction map $P\to Y$ is injective on $P_w$
it follows that 
\[
\int\limits_S\tr(ns,V)\ ds=\tr(n,V^T)=\tr(w,V^T).\]
\qed

%We sharpen our choice of $n$ as follows. Let $H$ be the derived subgroup of the Levi subgroup $C_G(S)$ of $G$. The composite mapping $H\cap N\hra N\to W$ gives an isomorphism  of the Weyl group $W_H=(H\cap N)/(H\cap T)$ onto $\{u\in W: \ s^u=s\ \forall s\in S\}$. Thus we may regard $w$ as an element of $W_H$. Since $S$ has finite index in $T_w$, it follows that the fixed-point group $(T\cap H)_w$ is finite. This means that all lifts $n$ of $w$ in $N\cap H$ are conjugate by $T\cap H$. From now on we assume $n$ is chosen to be in $N\cap H$. 

Since $n\in C_G(S)$, there exists $g\in C_G(S)$ such that $n^g\in T$. We set 
\[t:=n^g\in T.\]
Since $g$ centralizes $S$, we have $(ns)^g=ts$ for all $s\in S$. It follows that  
\[\tr(ns,V)=\tr(ts,V),\]
as functions on $S$.
From Lemma \ref{constant} we have 

\begin{prop}\label{prop:constant} Let $w\in W$ and let $S=(T_w)^\circ$ be the connected fixed-point subtorus of $w$ in $T$. Then there exists 
$t\in T$ which is $C_G(S)$-conjugate to an element of $w$ and for any such $t$ 
the trace of $w$ on the zero weight space $V^T$ is given by
\[\tr(w,V^T)=\int\limits_S\tr(ts,V)\ ds.\]
\end{prop}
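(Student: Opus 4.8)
The plan is to derive this directly from Lemma \ref{constant} by conjugating a chosen representative $n\in w$ into the maximal torus $T$ through the group $L:=C_G(S)$, in such a way that the integration torus $S$ is held fixed pointwise. The substantive content already resides in Lemma \ref{constant}; this statement is the reformulation that replaces the representative $n$ by a more convenient element $t\in T$.

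First I would verify the structural claim that $w\subset L$. By definition $S=(T_w)^\circ$ is fixed pointwise by $w$, so any representative $n\in w$ centralizes $S$; moreover every other element of the coset $w$ differs from $n$ by an element of $T$, and $T\subset L$ because $T$ is abelian and contains $S$. Hence $w\subset L=C_G(S)$. The centralizer of a torus in a compact connected group is again compact and connected, and $T$—being a maximal torus of $G$ contained in $L$—is a maximal torus of $L$. By conjugacy of maximal tori in $L$, there is $g\in L$ with $t:=n^g\in T$, which produces the element $t$ asserted in the statement.

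Next I would transport the family $\{ns:\ s\in S\}$ by this $g$. Since $g\in L$ centralizes $S$ pointwise and each $s\in S\subset T$ satisfies $s^g=s$, conjugation by $g$ gives $(ns)^g=n^g s^g=ts$ for every $s\in S$. As $\tr(\,\cdot\,,V)$ is a class function, it follows that $\tr(ns,V)=\tr(ts,V)$ as functions of $s\in S$, and therefore their constant terms along $S$ coincide. Applying Lemma \ref{constant} then yields
\[\tr(w,V^T)=\int_S\tr(ns,V)\ ds=\int_S\tr(ts,V)\ ds.\]

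Finally, for the assertion that the formula holds for \emph{any} $t\in T$ that is $L$-conjugate to an element of $w$, I would run the same computation starting from that element: writing $t=(n')^{h}$ with $n'\in w$ and $h\in L$, the identity $(n's)^{h}=ts$ holds for all $s\in S$ exactly as above, and Lemma \ref{constant} applied to the representative $n'$ again gives $\tr(w,V^T)=\int_S\tr(ts,V)\,ds$. The only step requiring care beyond Lemma \ref{constant} is the group theory of the preceding paragraph—namely that $w\subset C_G(S)$ and that maximal tori in the connected group $C_G(S)$ are conjugate—and this is the mild obstacle; the passage through the constant-term functional is automatic, since $\tr(ns,V)$ and $\tr(ts,V)$ are literally equal functions of $s$.
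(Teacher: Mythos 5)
Your proof is correct and follows essentially the same route as the paper: conjugate a representative $n\in w$ into $T$ by an element $g$ of $C_G(S)$, observe that $(ns)^g=ts$ because $g$ fixes $S$ pointwise, so the traces agree as functions on $S$, and then apply Lemma \ref{constant}. The only differences are that you spell out the standard facts the paper leaves implicit (that $w\subset C_G(S)$, that $C_G(S)$ is connected with maximal torus $T$, and the resulting conjugacy) and that you explicitly verify the ``for any such $t$'' clause, which the paper's argument covers only implicitly.
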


\subsection{Formal Characters}
In this section we extend the functional $\int_S$ to certain rational functions on $S$, using the theory of formal characters \cite[22.5]{humphreys}. 

Let $S$ be a compact (or algebraic) torus with character lattice $Y$ and coordinate algebra $\bc[S]$. 
The Fourier expansion
\[\vp=\sum_{\nu\in Y}\hat\vp (\nu)e_\nu, \qquad \text{for $\vp\in\bc[S]$}\]
defines a $\bc$-algebra isomorphism (Fourier transform) $\vp\mapsto\hat \vp$ from $\bc[S]$ to the group algebra $\bc[Y]$, where $\bc[S]$ has the pointwise product and $\bc[Y]$ has the convolution product: 
\begin{equation}\label{convolution}
\phi\ast \psi(\nu)=\sum_{\lam+\mu=\nu}\phi(\lam)\psi(\mu). 
\end{equation}
Via this isomorphism we identify the constant term functional $\int_S:\bc[S]\to\bc$ with the evaluation functional $\phi\mapsto \phi(0):\bc[Y]\to\bc$. 

Let us be given nonzero elements $\nu_1,\dots,\nu_r\in Y$, not necessarily distinct. Let $\Ga\subset Y$ be the semigroup generated by $\{\nu_1,\dots,\nu_r\}$. 

We let $\bc((Y))$ be the set of functions $\phi:Y\to\bc$ supported on finitely many sets of the form $\lam-\Ga$ for some $\lam\in Y$. 
The convolution product \eqref{convolution} extends to $\bc((Y))$ and $\bc[Y]$ is a subalgebra of $\bc((Y))$. 
We extend the functional $\int_S$ to $\bc((Y))$ by setting
\[\int_S\phi =\phi(0).\]
For any complex number $z$ and $i=1,\dots, r$ the 
Fourier transform of $1-z e_{-\nu_i}$ becomes invertible in $\bc((Y))$; its inverse is the function whose support is contained in $-\bn\nu_i$ and whose value at $-n\nu_i$, for $n\in\bn$, is $z^n$. More generally, for
\begin{equation}\label{h}
h=\prod_{i=1}^r(1-z_ie_{-\nu_i})\in \bc[S]
\end{equation}
the convolution-inverse of
$\hat h$ 
given by the weighted partition function
\begin{equation}\label{Pmu}
\hat h^{-1}(-\nu)=\scp(\nu)=\sum z_1^{n_1}z_2^{n_2}\cdots z_r^{n_r}
\end{equation}
where the sum runs over all $(n_1,\dots, n_r)\in\bn^r$ such that 
$\sum\limits_{i=1}^rn_i\nu_i=\nu$. Note that $\hat h^{-1}$ is supported on $-\Ga$ and that $(\hat e_\nu\ast \hat h^{-1})(0)=\scp(\nu)$.

\begin{lemma}\label{formalfourier} Let $h\in\bc[S]$ have the form \eqref{h} and suppose $f,g\in \bc[S]$ are such that $h\cdot f=g$. 
Then 
\[\int_Sf(s)\ ds=
\sum_{\nu\in Y}\hat g(\nu) \scp(\nu),\]
where $\scp(\nu)$ is given by \eqref{Pmu}.
\end{lemma}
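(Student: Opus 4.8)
The plan is to transfer the relation $h\cdot f=g$ from the pointwise algebra $\bc[S]$ into the convolution algebra $\bc((Y))$, invert $\hat h$ there, and then read off the constant term by evaluating at $0$. Since the Fourier transform $\vp\mapsto\hat\vp$ is a $\bc$-algebra isomorphism carrying the pointwise product on $\bc[S]$ to the convolution product on $\bc[Y]$, applying it to $h\cdot f=g$ gives $\hat h\ast\hat f=\hat g$ in $\bc[Y]\subset\bc((Y))$.

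Next I would use that $\hat h$ is invertible in $\bc((Y))$: as recalled just before the lemma, the transform of each factor $1-z_ie_{-\nu_i}$ is invertible, hence so is the transform $\hat h$ of their product, with convolution-inverse $\hat h^{-1}$ supported on $-\Ga$ and satisfying $\hat h^{-1}(-\nu)=\scp(\nu)$. Convolving $\hat h\ast\hat f=\hat g$ on the left by $\hat h^{-1}$ and using associativity of convolution in $\bc((Y))$ yields $\hat f=\hat h^{-1}\ast\hat g$. Since $\int_S$ is identified with evaluation at $0$ on $\bc((Y))$, I then compute
\[\int_S f(s)\,ds=\hat f(0)=(\hat h^{-1}\ast\hat g)(0)=\sum_{\nu\in Y}\hat h^{-1}(-\nu)\,\hat g(\nu)=\sum_{\nu\in Y}\scp(\nu)\,\hat g(\nu),\]
which is the claimed formula; the sum is finite because $\hat g\in\bc[Y]$ has finite support.

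The one point requiring genuine care, and which I regard as the heart of the argument, is that each step above takes place legitimately inside $\bc((Y))$. One must check that $\hat h^{-1}\ast\hat g$ is well-defined, namely that each coefficient $(\hat h^{-1}\ast\hat g)(\nu)=\sum_\mu\hat h^{-1}(\nu-\mu)\hat g(\mu)$ is a finite sum (which holds since $\hat g$ has finite support), that it lands back in $\bc((Y))$ (its support is contained in a finite union of translates of $-\Ga$), and that $\hat h^{-1}$ is a genuine two-sided convolution inverse of $\hat h$, so that the cancellation $\hat h^{-1}\ast(\hat h\ast\hat f)=\hat f$ is valid. All of these are exactly what the preceding construction of the algebra $\bc((Y))$ and of the weighted partition function $\scp$ provides, so once that machinery is in hand the proof is short.
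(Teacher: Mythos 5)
Your argument is correct and is essentially identical to the paper's proof: both transfer $h\cdot f=g$ to the convolution relation $\hat f\ast\hat h=\hat g$, invert $\hat h$ in $\bc((Y))$, and evaluate $\hat f=\hat g\ast\hat h^{-1}$ at $0$ using $\hat h^{-1}(-\nu)=\scp(\nu)$. Your added remarks on well-definedness of the convolution with $\hat h^{-1}$ are sound but are exactly the checks the paper delegates to its construction of $\bc((Y))$.
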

\proof We have $\hat g=\widehat{f\cdot h}=\hat f\ast\hat h$ in $\bc[Y]$ hence also in $\bc((Y))$. Since $\hat h$ is invertible in $\bc((Y))$ it follows that 
\[\hat f=\hat g\ast\hat h^{-1}=\sum_{\nu\in Y}\hat g(\nu)\hat e_\nu\ast\hat h^{-1}, \]
so we have 
\[
\int_Sf(s)\ ds=\hat f(0)=\sum_\nu \hat g(\nu)(\hat e_\nu\ast\hat h^{-1})(0)=\sum_\nu \hat g(\nu) P(\nu).
\]

\qed

\subsection{The character formula} \label{sec:generalw}

We apply the results (and notation) of Section \ref{sec:wcf} to the situation of \ref{sec:charconst}.  

Enumerate $R^{tS}_2=\{\be_1,\dots, \be_r\}$, let $\nu_i\in Y$ be the restriction of $\be_i$ to $S$, and let $z_i=e_{-\be_i}(t)$. 
As in \eqref{Pmu}, for $\nu\in Y$ we set
\[\scp_w(\nu)=\sum z_1^{n_1}z_2^{n_2}\cdots z_r^{n_r}
\]
where the sum runs over all $(n_1,\dots, n_r)\in\bn^r$ such that 
$\sum\limits_{i=1}^rn_i\nu_i=\nu$. If there are no such $r$-tuples $(n_i)$ then $\scp_w(\nu)=0$. 

\begin{thm}\label{mainthm} The trace of $w\in W$ on the zero weight space $V_\mu^T$ is given by
\begin{equation}\label{eq:mainthm}
\tr(w, V_\mu^T)=\frac{1}{\prod\limits_{\al\in R_1^{tS}}(1-e_{-\al}(t))}
\sum_{v\in W^{tS}}\vep(v)H_{tS}(v\mu)e_{v\mu-\rho}(t)\scp_w(v\mu-\rho), 
\end{equation}
where $S=(T_w)^\circ$ and
$t\in T$ is  $C_G(S)$-conjugate to an element of $w$ 
(see Proposition \ref{prop:constant}).
\end{thm}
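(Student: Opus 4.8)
The plan is to assemble the three preceding results---Proposition \ref{prop:constant}, Lemma \ref{singularweyl}, and Lemma \ref{formalfourier}---into a single computation. First I would invoke Proposition \ref{prop:constant} to replace the Weyl-group trace by a constant term over $S$,
\[\tr(w,V_\mu^T)=\int_S \tr(ts,V_\mu)\,ds,\]
where $t$ is the fixed $C_G(S)$-conjugate of an element of $w$. The object to understand is therefore the function $f(s):=\tr(ts,V_\mu)$, which is the restriction of the character of $V_\mu$ to the coset $tS$ and hence is a genuine element of $\bc[S]$.

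Next I would feed in Lemma \ref{singularweyl}, which on the dense open subset $S_0\subset S$ expresses $f(s)$ as
\[f(s)=\sum_{v\in W^{tS}}\frac{\vep(v)H_{tS}(v\mu)}{\prod_{\al\in R_1^{tS}}(1-e_{-\al}(t))}\cdot\frac{e_{v\mu-\rho}(ts)}{\prod_{\be\in R_2^{tS}}(1-e_{-\be}(ts))}.\]
Setting $h(s):=\prod_{\be\in R_2^{tS}}(1-e_{-\be}(ts))$ and clearing this denominator gives $f\cdot h=g$ on $S_0$, where
\[g(s)=\sum_{v\in W^{tS}}\frac{\vep(v)H_{tS}(v\mu)}{\prod_{\al\in R_1^{tS}}(1-e_{-\al}(t))}\,e_{v\mu-\rho}(ts)\in\bc[S].\]
The key observation is that, since $e_{-\be_i}(ts)=z_i\,e_{-\nu_i}(s)$ with $z_i=e_{-\be_i}(t)$ and $\nu_i$ the restriction of $\be_i$ to $S$, the function $h$ has exactly the form \eqref{h} required by Lemma \ref{formalfourier}, and the associated weighted partition function is precisely $\scp_w$.

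With the polynomial identity $f\cdot h=g$ in hand, I would apply Lemma \ref{formalfourier} to conclude $\int_S f(s)\,ds=\sum_{\nu\in Y}\hat g(\nu)\,\scp_w(\nu)$. Finally I would unwind $\hat g$: writing $e_{v\mu-\rho}(ts)=e_{v\mu-\rho}(t)\,e_{\bar\nu}(s)$, where $\bar\nu\in Y$ is the image of $v\mu-\rho$ under the restriction map $P\to Y$, the Fourier coefficient $\hat g(\nu)$ collects exactly those $v\in W^{tS}$ with $\bar\nu=\nu$. Substituting and re-summing over $v$ converts $\sum_\nu\hat g(\nu)\scp_w(\nu)$ into $\sum_v(\cdots)\scp_w(v\mu-\rho)$, which is the asserted formula \eqref{eq:mainthm}.

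The steps are essentially bookkeeping once the framework is in place; the one point requiring care---and the main obstacle---is the passage from the rational-function identity of Lemma \ref{singularweyl}, valid only on $S_0$ where the denominator $h$ is invertible, to the polynomial identity $f\cdot h=g$ in $\bc[S]$ needed to invoke the formal-character machinery of Lemma \ref{formalfourier}. This hinges on recognizing that $f=\tr(ts,V_\mu)$ is a regular function on all of $S$, not merely a rational one, and that $S_0$ is the complement of the proper closed subset cut out by $\{e_{\be}(ts)=1:\be\in R_2^{tS}\}$; since two regular functions agreeing on the dense set $S_0$ agree everywhere, the identity $f\cdot h=g$ holds throughout $S$.
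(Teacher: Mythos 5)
Your proposal is correct and follows essentially the same route as the paper: Proposition \ref{prop:constant} to convert the trace into a constant term over $S$, Lemma \ref{singularweyl} to expand $\tr(ts,V_\mu)$ on $S_0$, and Lemma \ref{formalfourier} applied to $f\cdot h=g$ to extract the constant term via the weighted partition function $\scp_w$. Your added remark justifying the passage from the identity on the dense subset $S_0$ to the identity $f\cdot h=g$ in $\bc[S]$ is a point the paper leaves implicit, and it is handled correctly.
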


\proof From Proposition \ref{prop:constant} we have  
\[\tr(w,V_\mu^T)=\int_S\tr(ts, V_\mu)\ ds.
\]
From Lemma \ref{singularweyl} we have on $S_0$ the relation
\[\tr(ts,V_\mu)=
\sum_{v\in W^{tS}}
\frac{\vep(v)\cdot H_{tS}(v\mu)}
{\prod\limits_{\al\in R_1^{tS}}(1-e_{-\al}(t))}\cdot \frac{e_{v\mu-\rho}(ts)}
{\prod\limits_{\be\in R_2^{tS}}(1-e_{-\be}(ts))}.
\]
For $v\in W^{tS}$ we set
\[c_v=\vep(v)\cdot \frac{ H_{tS}(v\mu)e_{v\mu-\rho}(t)}{\prod\limits_{\al\in R_1^{tS}}(1-e_{-\al}(t))}.
\]
In Prop. \ref{formalfourier} we take $f,g,h\in \bc[S]$ defined by 
\[f(s)=\tr(ts,V_\mu),
\qquad
g(s)=\sum_{v\in W^{tS}}
c_v e_{v\mu-\rho}(s),\qquad
h(s)=\prod\limits_{\be\in R_2^{tS}}(1-e_{-\be}(ts)).
\]

Since $f\cdot h=g$,  we have from Lemma \ref{formalfourier} that 
\[\begin{split}
\tr(w,V_\mu^T)&=\int_S\tr(ts,V_\mu)\ ds=\hat f(0) \quad (\text{ Prop. \ref{prop:constant}})\\
&=\hat g\ast \hat h^{-1}(0)
=\sum_{v\in W^{tS}}c_v\hat e_{v\mu-\rho}\ast \hat h^{-1}(0)
=\sum_{v\in W^{tS}}c_v\hat \scp_w(v\mu-\rho).
\end{split}
\]
Theorem \ref{general} is proved. \qed

\subsubsection{A special case}

Let $J$ be a subset of the simple roots in $R^+$, let
$R_J^+$ be the positive roots spanned by $J$ and $W_J=\la r_\al:\ \al\in R_J^+\ra$. Suppose $w$ is a Coxeter element in $W_J$.
Then $S=(\cap_{\al\in J} \ker e_\al)^\circ$. 
Let $G_J$ be the derived group of $C_G(S)$. We may choose $n\in w\cap G_J$, and let $t$ be a $G_J$-conjugate of $n$. Then $t$ is a regular element of $G_J$, so we have
\begin{equation}\label{coxeterinlevi1}
R_{tS}^+=\varnothing,\qquad R_1^{tS}=R_J^+,\qquad 
R_2^{tS}=R^+-R_J^+,\qquad H_{tS}\equiv 1.
\end{equation}
In this situation  \eqref{eq:mainthm}  becomes
\begin{equation}\label{coxeterinlevi2}
\tr(w, V_\mu^T)=\frac{1}{\prod\limits_{\al\in R_J^+}(1-e_{-\al}(t))}
\sum_{v\in W}\vep(v)e_{v\mu-\rho}(t)\scp_w(v\mu-\rho). 
\end{equation}

\section{Small Groups}\label{sec:smallgroups}

In this section we work out the complete character formula for $\tr(w,V_\mu^T)$ in the cases where Kostant's partition function has a closed quasi-polynomial expression (that I am aware of).  
This means $G$ is one of $\SU_3,\ \Sp_4,\ G_2,\ \SU_4$.

We use the following notation.  
For $a\in\bz$, let $(a)_2=0$ if $a$ is even, $(a)_2=1$ if $a$ is odd.
For $n\in\{3,4,6\}$ let 
\[(a)_n=\begin{cases}
\ \ \ 1&\quad\text{if}\quad a\equiv 1\mod n\\
-1&\quad\text{if}\quad a\equiv -1\mod n\\
\ \ \ 0&\quad\text{if}\quad \gcd(a,n)>1.
\end{cases}
\]

\subsection{Rank-Two Groups}

For the rank-two groups, let $\al$ and $\be$ be simple roots, where $\al$ is a short root for  $\Sp_4$ and is a long root for $G_2$. 
$\al_0$ is the highest root and $\be_0$ is the highest short root.
Let $\om_\al$ and $\om_\be$ be the corresponding fundamental weights, so that $\rho=\om_\al+\om_\be$. 
For $\mu\in P_{++}$, we have  
\begin{equation}\label{rho-shift}
\mu=a\om_\al+b\om_\be
\end{equation}
where $a,b$ are positive integers. The character of $W$ afforded by $V_\mu^T$ will be expressed in terms of quasi-polynomial functions of $(a,b)$.

Recall that $\bn=\{0,1,2,3,\dots\}$.

\subsubsection{$\SU_3$}\label{SU3} Here $V_\mu^T\neq 0$ if and only if $a\equiv b\mod 3$. Since 
$V$ and its dual have isomorphic zero weight spaces, we may assume that $a\leq b$. 
The conjugacy classes in $W=S_3$ are represented by $1=1_W$, a reflection $r$, and a Coxeter element $\cox$.  
We find 
\[\begin{split}
\dim V_\mu^T&=a\\
\tr(r,V_\mu^T)&=(a)_2\cdot (-1)^{b+1}\\
\tr(\cox,V_\mu^T)&=(a)_3
\end{split}
\]
The computations are summarized as follows. 
Kostant's partition function $\scp$ is given by $\scp(m\al+n\be)=1+\min(n,m)$, for $n,m\in\bn$. From this one easily computes the above value for $\dim V_\mu^T$. 

The value for $\tr(\cox,V_\mu^T)$ is obtained from Kostant's formula. Setting $\zeta=\eta^2$, we get
\[\tr(\cox,V_\mu^T)=\tr(\check\rho(\zeta),V_\mu)
=\frac{ \zeta^a-\zeta^{-a} }{\zeta-\zeta^{-1}}=(a)_3.\]

%Since $\scp$ is so simple, one can also use it to compute the multiplicity of the weights in $k\al$, for $k\in\bn$, to arrive at the above value of $\tr(r,V_\mu^T)$. However we will use this example to illustrate our general method: 

For $r=r_\al$, we have 
$S=\check\lam(S^1)$, where $\check\lam(z)=\diag(z,z,z^{-2})$, and we may take $t=\check\rho(-1)=\diag(-1,1,-1)$. We have (cf. \ref{coxeterinlevi1}) 
\[R_{tS}^+=\varnothing,\quad R_1^{tS}=\{\al\},\quad R_2^{tS}=\{\be,\rho\}.\]
We have $Y=\bz\nu$, where $e_\nu(\check\lam(z))=z$, $z_1=\be(t)=-1$, $z_2=\rho(t)=1$, $\nu_1=\nu_2=3\nu$ and 
\[\scp_r(n\nu)=\begin{cases}
1-(n/3)_2&\quad \text{if $n\in 3\bn$}\\
0&\quad \text{if $n\notin 3\bn$}
\end{cases}
\]
From \ref{coxeterinlevi2} we get
\[\begin{split}
\tr(r,V_\mu^T)&=\frac{1}{2}\sum_{v\in W}\vep(v)e_{v\mu}(t)\scp_r(v\mu-\rho)\\
&=\frac{1}{2}\left\{(-1)^{a+b}\left[1-\left(\frac{a+2b-3}{3}\right)_{\!2}\ \right]
-(-1)^{b}\left[1-\left(\frac{a+2b-3}{3}\right)_{\!2}\ \right]\right\}\\
&=(-1)^{b+1}(a)_2\left(\frac{a+2b}{3}\right)_{\!2}=(-1)^{b+1}(a)_2.
\end{split}
\]
Using the representation theory of $\SU_2$, one can also obtain $\tr(r, V_\mu^T)$ by calculating multiplicities of weights $k\al$ for $k\in\bn$. 
For $\SU_3$ this is feasible because the partition function $\scp$ is so simple.

{\bf Remarks:\ } 

1)\ $V_\mu^T$ is a multiple of the regular representation $\Reg$ of $W$ if and only if  $a\in 6\bz$, in which case $V_\mu^T$ is $a/6$ copies of $\Reg$.

%\[\la \vep^k, V_\mu^T\ra=
%\frac{1}{6}\left[a+3(a)_2(-1)^{k+b+1}+2(a)_3\right].\]
%\[ \la \varrho, V_\mu^T\ra=\frac{a-(a)_3}{3}.\]
2)\  $V_\mu^T$ is irreducible if and only if $a\in{1,2}$. 
If $a=1$, then $V_\mu=\Sym^{3d}(\bc^3)^\ast$ where $b=1+3d$, and  we have $V_\mu^T\simeq\vep^d$. 
If $a=2$ then $V_\mu^T\simeq \varrho$. 
Thus, each irrep of $W$ is of the form $V_\mu^T$ for infinitely many $\mu\in P_{++}$.

\subsubsection{$\Sp_4$}\label{Sp4}

In this section we explicate our character formula for $G=\Sp_4$. 
The simple roots are  $\al,\be$ where $\al$ is short. 
Writing $\mu=a\om_\al+b\om_\be$, we have 
$V^T_\mu\neq 0$ iff $a$ is odd. For $\eta\in S^1$ of order eight, let 
\[(a)_8=\frac{\eta^a-\eta^{-a}}{\eta-\eta^{-1}}=
\begin{cases} 
\ \ 1&\quad \text{if $a\equiv 1,3\mod 8$}\\
-1&\quad \text{if $a\equiv 5,7\mod 8$}.
\end{cases}
\]
The conjugacy classes in $W$ are represented by 
\[1_W,\ r_\al,\  r_\be,\ \cox^2,\ \cox.\]
For $\mu=a\om_\al+b\om_\be$ the values of $\tr(w,V_\mu^T)$  are shown in the following table. 
\begin{center}
\[
\renewcommand{\arraystretch}{2.6}
\begin{array}{l|l}
w&\tr(w,V_\mu^T)\\
\hline
1_W&
\dfrac{1}{2}\left(ab+(b)_2\right)\\
r_\al & \dfrac{(-1)^{b+1}}{2}\cdot
\left[b+(a)_4(b)_2\right]\\
r_\be & 
\begin{cases}
(a)_4&\quad\text{if $b\in 2+4\bn$}\\
(b)_4&\quad\text{if $a+b\in 2+4\bn$}\\
\ \ 0&\quad\text{otherwise}
\end{cases}\\
\cox^2&\dfrac{(a)_4}{2}\left[b+a(b)_2\right] \\
\cox&\begin{cases}
-(a)_8(a)_4&\quad\text{if $b\in 2+4\bn$}\\
\ \ \ (a)_8(b)_4&\quad\text{if $a+b\in 2+4\bn$}\\
\quad 0&\quad\text{otherwise}
\end{cases}\end{array}
\]
\end{center}

The calculations are summarized as follows. 

The formula for $\dim V_\mu^T$ is an exercise in \cite{bour76}. 
The traces of $\cox$ and $\cox^2$ come from Theorem \ref{thm:ellreg}.

For $w=r_\al$ we have $S=\check\be_0(S^1)$, where $\check \be_0$ is the highest  coroot. Then $Y=\bz\nu$ with $\nu(\check\be_0(z))=z$,  and we can take $t=\check\al(i)$, where $i^2=-1$. Then
\[R_{tS}^+=\varnothing,\qquad R^{tS}_1=\{\al\},\qquad R_2^{tS}=\{\be, \al+\be, 2\al+\be\}.\]
Each root in $R_2^{tS}$ restricts to $2\nu$ on $S$. 
The partition function is given by 
\[\scp_{r_\al}(n\nu)=\sum_{(n_1,n_2,n_3)}(-1)^{n_1+n_3},\]
summed over $\{(n_1,n_2,n_3)\in \bn^3:\ 2n_1+2n_2+2n_3=n\}$, so 
\[\scp_{r_\al}(n\nu)=
\begin{cases}
(-1)^{n/2}\left ( 1+\lfloor n/4 \rfloor \right)&\quad\text{if $n\in 2\bn$}\\
0&\quad\text{if $n\notin 2\bn$}.
\end{cases}
\]
In the sum 
\[
\tr(r_\al,V_\mu^T)=
\frac{1}{\De(t)}\sum_{v\in W}\vep(v)e_{v\mu}(t)\scp_{r_\al}(v\mu-\rho),
\]
only the terms for $v=1, r_\al, r_\be, r_\al r_\be$ are nonzero and we get 
\[
\tr(r_\al,V_\mu^T)=(a)_4\left[ \scp_{r_\al}(a+2b-3)-(-1)^b\scp(a-3)\right],
\]
which simplifies to the formula above. The trace of $r_\be$ is obtained similarly.

{\bf Remarks.\ } 

1. The irreps of $\Sp_4$ with nonzero weight spaces are those factoring through $\SO_5$. We see that the long element $w_0=\cox^2$ has nonzero trace on $V^T$ for  every irrep $V$ of $\SO_5$. This means the virtual character $V^T$ is never a linear combination of induced representations from proper parabolic subgroups of $W$ (and in particular is never the regular representation). 

In fact, this holds for any irrep $V$ of $\SO_{2n+1}$, $n\geq 1$. Indeed, $P/2Q=A\sqcup (\rho+A)$, where $A=Q/2Q$. The $W$-stabilizer of $\rho$ in $\rho+A$ is the symmetric group $S_n$, so  $W$ is transitive on $\rho+A$. Hence $\tr(w_0,V^T)\neq 0$ by Thm. \ref{thm:ellreg}.

2. Back to $\Sp_4$. Comparing $\dim V_\mu^T$ and $\tr(w_0,V_\mu^T)$ shows that $w_0$ is a scalar on $V_\mu^T$ if and only if $a=1$ or $b=1$. In the former case $V_\mu$ is the degree $b-1$ harmonic polynomials on the five-dimensional orthogonal representation and $w_0$ is trivial on $V_\mu^T$. In the latter case $V_\mu$ is the degree $a-1$ polynomials on the four-dimensional symplectic representation and $w_0$ acts on $V_\mu^T$ by the scalar $(a)_4$.

3. The trivial representation of $W$ appears in $V_\mu^T$ with multiplicity 
\[\la \triv, V_\mu^T\ra=\begin{cases}
\dfrac{b}{4}\left\lfloor \dfrac{a}{4}\right\rfloor\quad&\text{if $b\in 4\bn$}\\
&\\
\dfrac{1}{4}\left(b\left\lfloor \dfrac{a}{4}\right\rfloor+(a)_4(1-(a)_8)\right)
\quad&\text{if $b\in 2+4\bn$}\\
&\\
\dfrac{1}{4}(b+(a)_4)\left\lceil \dfrac{a}{4}\right\rceil\quad&\text{if $a+b\in 4\bn$}\\
&\\
\dfrac{1}{4}\left( (b+(a)_4)\left\lceil \dfrac{a}{4}\right\rceil+(b_4)(1+(a)_8)\right)\quad&\text{if $a+b\in 2+4\bn$}
\end{cases}
\]
The reflection representation appears with multiplicity
\[\la \refl, V_\mu^T\ra=\frac{1}{8}[a-(a)_4][b-(b)_2(a)_4].\]

\newpage

\subsubsection{ $G_2$} \label{G2}

In this section we explicate our character formula for $G$ of type $G_2$.

For relatively prime positive integers $m,n$, and $q\in\bq$, let
$P_{mn}(q)$ be the number of solutions $(x,y)\in\bn\times\bn$ of the equation $mx+ny=q$. For any integer $k$, we have the constant-term formula
\[
\int_{S^1}\frac{z^k}{(z^{m}-z^{-m})(z^{n}-z^{-n})}\ dz=P_{mn}\left(\frac{k-m-n}{2}\right).
\]
Now $P_{mn}(q)=0$ unless $q\in\bn$, in which case $P_{mn}(q)$ is given by {\it  Popoviciu's formula} \cite{popoviciu} (see also \cite[1.3]{beck-robins})
\[P_{mn}(q)=\frac{q}{mn}-\left\{\frac{qm'}{n}\right\}-\left\{\frac{qn'}{m}\right\}+1,\]
where $m', n'$ are any integers such that $mm'+nn'=1$, and 
$\{r\}=r-\lfloor{r}\rfloor$ denotes the fractional part of a rational number $r$.

For $G_2$ we will need just two of these partition functions, $P_{12}$ and $P_{23}$, which have the more explicit formulas
\[P_{12}(q)=1+\gint{q}{2},\qquad P_{23}=1+\gint{q}{2}-\lint{q}{3}.\]

The conjugacy classes in $W$ are represented by 
\[1_W,\ r_\al,\  r_\be,\ \cox^3,\ \cox^2,\ \cox.\]
For $\mu=a\om_\al+b\om_\be$ the values of $\tr(w,V_\mu^T)$  are shown in the following table. 
\begin{center}
\[
\renewcommand{\arraystretch}{2.6}
\begin{array}{c|c}
w&\tr(w,V_\mu^T)\\
\hline
e&
\dfrac{a^3b}{12}+\dfrac{a^2b^2}{8}+\dfrac{ab^3}{36}+\dfrac{ab}{6}+(b)_3\cdot \dfrac{2a}{9}+(a)_2\cdot(b)_2\cdot \dfrac{3}{8}\\
r_\al &(-1)^{a+b}\left[
P_{23}\left(\dfrac{3a+2b-5}{2}\right)+
(-1)^a\cdot P_{23}\left(\dfrac{3a+b-5}{2}\right)-
P_{23}\left(\dfrac{b-5}{2}\right)\right]\\
r_\be &(-1)^{a+1}\left[
 P_{12}\left(\dfrac{2a+b-3}{2}\right)- P_{12}\left(\dfrac{a+b-3}{2}\right)+(-1)^b\cdot P_{12}\left(\dfrac{a-3}{2}\right)\right]\\
\cox^3&\dfrac{1}{8}\left[ (a)_2\cdot a(3a+2b)-(a+b)_2\cdot (a+b)(3a+b)+(b)_2\cdot (2a+b)b\right]\\
&=\dfrac{1}{8}\begin{cases}
0&(a,b)\equiv(0,0)\mod 2\\
-a(3a+2b)& (a,b)\equiv(0,1)\mod 2\\
-b(2a+b)&  (a,b)\equiv(1,0)\mod 2\\
(a+b)(3a+b)&(a,b)\equiv(1,1)\mod 2
\end{cases}
\\
\cox^2 &\dfrac{1}{9}\left[(a)_3\cdot (3a+2b)-(a+b)_3\cdot (3a+b)+(2a+b)_3\cdot b\right]\\
&=\dfrac{(b)_3}{3}\begin{cases}
-a&\quad \text{$ab\equiv 0\mod 3$}\\
2a+b&\quad \text{$ab\equiv 1\mod 3$}\\
-(a+b)&\quad \text{$ab\equiv 2\mod 3$}
\end{cases}
\\
\cox&-(a)_6\cdot (3a+2b)_6+(a+b)_6\cdot (3a+b)_6-(2a+b)_6\cdot (b)_6\\
&=\begin{cases}
+1\quad & (a,b)\equiv (\pm 1,\pm 1),\  (\pm 2, \pm 1),\ (3,\pm 2)\mod 6\\
-1\quad & (a,b)\equiv (\pm 1,\pm 2),\  (\pm 2, \mp 1),\ (3,\pm 1)\mod 6\\
0\quad & \text{otherwise}
\end{cases}
\end{array}
\]
\end{center}

As far as I know, the dimension of $V_\mu^T$ for $G_2$ was first given explicitly in \cite{aik}. The formulas therein evidently take positive integer values without any sign cancellations.  On the other hand, they have nine different cases, according to congruences of $(a,b)\mod(2,6)$.  
A more compact expression for $\dim V_\mu^T$ was obtained by Vergne, using coordinates $(m,n)$ where $\mu-\rho=m\al+n\be$ (see \cite{kumar-prasad}).  If we change coordinates to $(a,b)$ as in \eqref{rho-shift}
then Vergne's formula for $\dim V_\mu^T$ simplifies to the one given above. 

The character values on the reflections come from Theorem \ref{mainthm}. 

The elliptic character values are expressed as harmonic quasi-polynomials as in \eqref{intro:elliptictrace} and also as piecewise monomials determined by the action of $W$ on $P/mP$, as in Thm. \ref{ellreg}. 

The latter is analyzed by recalling that $W$ is the isometry group of the quadratic form $q:P\to\bz$ given  
by $q(\mu)=3a^2+3ab+b^2$, for which $q(\rho)=7$. 
For each $m\in\{2,3,6\}$, reduction modulo $m$ gives a quadratic form
$q_m:P/mP\to\bz/m\bz$  and a homomorphism 
from $W$ to the isometry group $O(q_m)$ of $q_m$ on $P/mP$.
For all $m$, we have $\tr(\cox^{6/m},V_\mu^T)\neq 0$ only if $q_m(\mu)=q_m(\rho)=1$.

The form $q_2$ is anisotropic; $W$ is transitive on nonzero vectors in $P/2P$ and $\tr(\cox^{3},V_\mu^T)\neq 0$ if and only if $q_2(\mu)=1$. 

The form $q_3$ is degenerate, with radical spanned by 
$\om_\al+3P$. 
The map $W\to O(q_3)$ realizes $W$ as the Borel subgroup of $\GL(P/3P)$ stabilizing the radical line.
The orbit $W\rho$ consists of the six vectors in $P/3P$ with $q_3(\mu)=1$; these are the cosets of the short roots. 

For $m=6$ the vectors with $q_6=1$ form two $W$-orbits, 
represented by $\rho+6P$ and $\be+6P$. It follows that 
$\tr(\cox,V_\mu^T)\neq 0$ if and only if $q_6(\mu)=1$ and $\mu$ is not in the coset of a root.

{\bf Remarks.\ }

1)\ $V_\mu^T$  is a multiple of the regular representation of $W$ if and only if $(a,b)\equiv (0,0)\mod (2,6)$. 

2)\ Let $W_2\simeq W_{A_1\times A_1}$ be a subgroup of $W$ generated by reflections about a pair of orthogonal roots. Then 
$V_\mu^T$ is induced from a character of $W_2$ if and only if $b\in3\bz$.

3)\ $V_\mu^T$ is irreducible for $W$ only for the trivial, seven-dimensional and adjoint representations.  Indeed, from the monomial formula for $\tr(\cox^3,V_\mu^T)$ we find that the list of $\mu\in P_{++}$ for which 
$\tr(w_0, V_\mu^T)$ belongs to the set $\{\chi(\cox^3):\ \chi\in\Irr(W)\}=\{\pm 1, \pm 2\}$, is just
\[(a,b)=(1,1),\quad (1,2),\quad (2,1),\quad (3,2).\]
The first three cases are those with $V_\mu^T$ irreducible. In the  last case $\mu=3\om_\al+2\om_\be$, we  have $\tr(\cox^3,V_\mu^T)=-2$ and $\tr(\cox^2, V_\mu^T)=+1$. As these are not the values of any $\chi\in\Irr(W)$, it follows that $V_\mu^T$ cannot be irreducible for $W$. 

\subsection{$\SU_4$} \label{SU4}

For $G=\SU_4$, let $\al,\be,\ga$ be simple roots with $\al$ and $\ga$ orthogonal to each other. Let $\om_\al, \om_\be, \om_\ga$ be the corresponding fundamental weights, so that $\rho=\om_\al+\om_\be+\om_\ga$. 

For a dominant regular weight $\mu$, the  character of $V_\mu^T$ is expressed as a function of coordinates $(a,b,c)$ where 
$a,b,c$ are positive integers such that
\[\mu=a\om_\al+b\om_\be+c\om_\ga.\]
Replacing $V_\mu$ by its dual does not change the $W$-module structure of $V_\mu^T$, so we may assume that 
\[a\leq c.\] 
Set
\[d:=\frac{c-a}{2},\qquad f:=b-d, \qquad s:=\dfrac{c+a}{2}.\]
Then $V_\mu^T\neq 0$ if and only if $f\in 1+2\bz$.

Index the conjugacy classes in $W=S_4 $ by partitions $[\lam_1\lam_2\cdots]$ of $4$. For example $[1111]$ is the identity element and $[4]=\cox$. 

The character values are shown in the following table. 

\begin{center}
\renewcommand{\arraystretch}{2.3}
\begin{tabular}{l | l}
$w$&$\tr(w,V_\mu^T)$\\
\hline
$[1111]$&
	$\dfrac{1}{2}\begin{cases}
	ab(a+b)&\quad f\leq 1\\
	a\left(bc+1-d^2\right)&\quad f> 1
	\end{cases}$\\

$[211]$&
         $\dfrac{1}{2}\begin{cases}
         a\cdot[d]_{2\bn}-b\cdot [s]_{2\bn}&\quad
	f\leq -1\\
	\left( 1+s(f)_4\right)\cdot [d]_{2\bn}-
	\left( 1+d(f)_4\right)\cdot [s]_{2\bn}&\quad
	f> -1\\
	\end{cases}$\\

%$(-1)^a\!\left\{
%[\scp(m)\!+\!\scp(m+d)]\!\cdot\!\left[s\right]_{2\bz}\!-\!
%(-1)^b\scp(m-f)\!\cdot\!\left[a\right]_{1+2\bz}\!-\!
%[\scp(m+s)\!+\!\scp(m)]\!\cdot\!\left[d\right]_{2\bz}
%\right\}$\\

$[22]$&
       $\dfrac{1}{2}\left\{ (-1)^db\cdot [c]_{1+{2\bn}}-(-1)^ca\cdot [d]_{2\bn}\right\}$\\
%$(-1)^b\left\{
%\left\la\dfrac{s}{2}\right\ra
%-\left\la\dfrac{b+s}{2}\right\ra
%-(-1)^{a}\left\la\dfrac{d}{2}\right\ra
%\right\}$\\
$[31]$&$(a)_3\cdot
\big(
[2c+f]_{3\bn}-[f]_{3\bn}\big)$\\
$[4]$&$\dfrac{1}{2}
\left\{
(-1)^a\de\left(\dfrac{s}{2}\right)+
(-1)^{b}\de\left(\dfrac{b+s}{2}\right)-
(-1)^c\de\left(\dfrac{d}{2}\right)
\right\}$
\end{tabular}
\end{center}

where for any $A\subset\bz$ and $x\in\bq$ we use the notation
\[
[x]_A=\begin{cases}
1&\quad\text{if $x\in A$}\\
0&\quad\text{if $x\not\in A$}
\end{cases}
\qquad
\de(x)=\begin{cases}
(-1)^x&\quad\text{if $x\in \bn$}\\
0&\quad\text{if $x\not\in \bn$}.
\end{cases}
\]

%For example, if $\lam+\rho=a\rho$ (where $a$ is odd, to satisfy the central character condition) we have 
%\[\begin{split}
%\tr(1, V_\mu^T)&=\tfrac{1}{2}a(a^2+1)\\
%\tr([r_\be, V_\mu^T)&=\tfrac{1}{2}[1+(a)_4\cdot a]\\
%\tr([r_\al r_\ga, V_\mu^T)&=a\\
%\tr([r_\al r_\be, V_\mu^T)&=(a)_3\\
%\tr(\cox, V_\mu^T)&=1
%\end{split}
%\]

The dimension of $V_\mu^T$ was obtained in \cite[Thm. 6.1]{kumar-prasad}, using coordinates different from ours.  Our standing condition $a\leq c$ puts us in  case $(2)$ (if $f\leq 1$) or  $(4)$ (if $f>1$) of [loc. cit.]. 

We give here just the computation for the trace of a reflection on $V_\mu^T$, leaving the other (easier) cases to the reader. 

We take $w=r_\be$, $S=\{\diag(x,y,y,z):\ xy^2z=1\}$, 
$t=\diag(-1,-1,1,1)=\check\al(-1)$.
Then 
$R_{tS}^+=\varnothing$, $R_1^{tS}=\{\be\}$, $R_2^{tS}=\{\al,\ \ga,\ \al+\be,\ \be+\ga,\ \al+\be+\ga\}$. 
For $s\in S$ we set 
\[\chi(s)=x/y,\quad \eta(s)=y/z,\quad (\chi\eta)(s)=x/z.\]
From Theorem \ref{mainthm} we have
\begin{equation}\label{211}
\tr(r_\be,V_\mu^T)=-\frac{1}{2}\sum_{v\in W}
\vep(v)(-1)^{\la v\mu,\check\al\ra}
\scp_w(v\mu-\rho).
\end{equation}
Here the partition function $\scp_w$ is given by
\[\scp_w(\nu)=
\sum_{(p,q,r)}(-1)^r,
\]
where the sum is over those $(p,q,r)\in\bn^3$ such that 
\begin{equation}\label{nu}
2p\chi+2q\eta+r(\chi+\eta)=\nu.
\end{equation}
If $\nu$ is the restriction to $S$ of $v\mu-\rho$ 
then \eqref{nu} means that
\begin{equation}\label{AC}
2p+r=A,\qquad 2q+r=C, 
\end{equation}
where $v\mu-\rho=A\al+B\be+C\ga$. 
In particular, $r\equiv A\equiv C\mod 2$. 
Setting $n=\min\{A,C\}$ we have $(-1)^r=(-1)^n$ and we find that 
\[\scp_w(v\mu-\rho)=P(n)\cdot[A-C]_{2\bz},
\]
where 
\[P(n)=(-1)^n\left(1+\gint{n}{2}\right).\]
and $[x]_{2\bz}=1$ if $x\in2\bz$, zero otherwise.

We next find those $v\in W$ for which \eqref{AC} has a solution. 
%Since $\rho=\frac{3}{2}\al+2\be+\frac{3}{2}\ga,$ we have 
%\[A=
%\la v\mu-\rho,\check\om_\al\ra=
%\la \mu,v^{-1}\check\om_\al\ra-\tfrac{3}{2}\quad\text{and likewise}
%\quad
%C=\la \mu,v^{-1}\check\om_\ga\ra-\tfrac{3}{2}
%\]
Write $v^{-1}$ as a permutation $v_1v_2v_3v_4$, sending $i\mapsto v_i$. The values of $v^{-1}\check\om_\al$ and $v^{-1}\check\om_\ga$ are determined by $v_1$ and $v_4$, respectively. 
%Since 
%\[\check\om_\al=\tfrac{1}{4}(3\check\al+2\check\be+\check\ga)\qquad\text{and}\qquad
%\check\om_\ga=\tfrac{1}{4}(\check\al+2\check\be+3\check\ga)
%\]
%we find that 
%\[\begin{split}
%v_1&=1\quad\Rightarrow\quad A=\tfrac{1}{4}(3a+2b+c-6)\\
%v_1&=2\quad\Rightarrow\quad A=\tfrac{1}{4}(-a+2b+c-6)\\
%v_1&=3\quad\Rightarrow\quad A=\tfrac{1}{4}(-a-2b+c-6)\\
%v_1&=4\quad\Rightarrow\quad A=\tfrac{1}{4}(-a-2b-3c-6)
%\end{split}
%\]
When $v_1=4$ we have $A<0$ so there is so solution to \eqref{AC}.  Likewise, 
%\[\begin{split}
%v_4&=4\quad\Rightarrow\quad C=\tfrac{1}{4}(a+2b+3c-6)\\
%v_4&=3\quad\Rightarrow\quad C=\tfrac{1}{4}(a+2b-c-6)\\
%v_4&=2\quad\Rightarrow\quad C=\tfrac{1}{4}(a-2b-c-6)\\
%v_4&=1\quad\Rightarrow\quad C=\tfrac{1}{4}(-3a-2b-c-6)
%\end{split}
%\]m
when $v_4=1,2$ we have $C<0$ (since $a\leq c$, in the case $v_4=2$),  so there is again no solution to \eqref{AC} .

The remaining possibilities are as follows: For each pair $(v_1, v_4)$ there are two $v$'s, 
with opposite values of $\vep(v)$. In each row the top $v$ has sign $\vep(v)=+1$ and the bottom $v$ has sign $\vep(v)=-1$, and $m=(f-3)/2$.
\[
\begin{array}{c|c|c|c|c|c}
v_1& v_4& n=\min\{A,C\}&|A-C|&v^{-1}& \la\mu,v^{-1}\check\al\ra\\
\hline
1&4&m+s&d&1234& a\\
&&&&1324&a+b\\
\hline
1&3&m&s&1423& a+b+c\\
&&&&1243&a\\
\hline
2&4&m+d&s&2314& b\\
&&&&2134&-a\\
\hline
2&3&m&d&2143& -a\\
&&&&2413&b+c\\
\hline
3&4&m-f&b+s&3124& -a-b\\
&&&&3214&-b\\
\hline
\end{array}
\]
Label the rows of this table by the pairs $(i,j)=(v_1, v_4)$. 
For each $(i,j)$ let $m_{ij},\ A_{ij},\ C_{ij}$ be the corresponding values of $m, A, C$ and set
\[
M_{ij}=P(m_{ij})\cdot[A_{ij}-C_{ij}]_{2\bz}.
\]
After further simplification, \eqref{211} becomes 
%\[\begin{split}
%&\tr(r_\be,V^T_\mu)=\\
%&-\tfrac{1}{2}\left\{[ (-1)^a\!-\!(-1)^{a+b})]\!\cdot\!(M_{14}+M_{23})
% +[ (-1)^b\!-\!(-1)^{a})]\!\cdot\! (M_{13}+M_{24})
%+[ (-1)^{a+b}\!-\!(-1)^{b})]\!\cdot\! M_{34}\right\}\\
% &=(-1)^a\left\{\ (a+b)_2\cdot(M_{13}+M_{24})-(b)_2\cdot(M_{14}+M_{23})
 %+\tfrac{1}{2}[ (-1)^{a+b}\!-\!(-1)^{b})]\cdot M_{34}\ \right\}
 %\end{split}
 %\]
%We can simplify further: If $M_{13}$ or $M_{24}\neq 0$ then $s\in 2\bz$ which implies $a+b$ is odd. If $M_{14}$ or $M_{23}\neq 0$ then $d\in 2\bz$ which implies $b$ is odd. Likewise if  If $M_{34}\neq 0$ then $a$ is odd. So in fact, 
\[\begin{split}
\tr(r_\be,V_\mu^T)&=(-1)^a\left\{\ M_{13}+M_{24}-M_{14}-M_{23}-(-1)^b\cdot M_{34}\ \right\}\\
&=(-1)^a\left\{
[P(m)+P(m+d)]\cdot [s]_{2\bz}-
[P(m+s)+P(m)]\cdot [d]_{2\bz}-(-1)^bP(m-f)\cdot [s-d]_{1+2\bz}
\right\}.
\end{split}
\]
Breaking into cases for $s,d$ even/odd, we get the values for $\tr(r_\be, V_\mu^T)$ in the table above.

{\bf Remarks:\ }

1.  $V_\mu^T$ is a multiple of the regular representation when $a,b,c\in 2\bz$ and either $a$ or $c$ is in $3\bz$. For example, if $(a,b,c)=(2,4,12)$ then $
\dim V_\mu^T=24$ so $V_\mu^T$ is exactly the regular representation of $W$. 

2.\ The table of $\mu\in P_{++}$ for which $V_\mu^T$ is irreducible is as follows.
\newcommand\Tstrut{\rule{0pt}{2.4ex}}
\newcommand\Bstrut{\rule[-1.2ex]{0pt}{0pt}}
\[\begin{array} { l | l }
(a,b,c)& V_\mu^T\Bstrut\\
\hline
(1,3,1)& \varrho_2\\
(1,1,4k+1)& \vep^{k}\\
(2,1,4k+2)& \vep^{k}\otimes\varrho\\
(1,2,4k+3)& \vep^{k+1}\otimes\varrho\\
\end{array}
\]

Here, $k\in\bn$.
$\varrho$ and $\refl_2$ are the reflection and two-dimensional irreducible representations  of $S_4$, respectively. As Kostant and Gutkin discovered for $S_n$, each irrep of $S_4$ appears as the zero weight space of exactly one constituent of 
$\otimes^4\bc^4$. In fact, all but $\refl_2$ appear in infinitely many higher tensor powers as well.

3.\ We have $d=0$ exactly when $V_\mu$ is self-dual. In this case $\mu=(a,b,a)$ with $b$ odd, so $s=a$ and $f=b$. The formulas simplify to
\[\begin{split}
\tr(1^4,V_{\mu}^T)&=\tfrac{1}{2}a(ab+1)\\
\tr(211,V_{\mu}^T)&=\tfrac{1}{2}\left[(a)_2+a\cdot (b)_4\right]\\
\tr(22,V_{\mu}^T)&=\tfrac{1}{2}(-1)^{a+1}\left[a+b\cdot (a)_2\right]\\
\tr(31,V_{\mu}^T)&=(a)_3([2a+b]_{3\bn}-[b]_{3\bn})\\
\tr(4,V_{\mu}^T)&=
\begin{cases}
\ \ \ 1&\quad\text{if $a+b\in 2+4\bn$}\\
-1&\quad\text{if $a\in 2+4\bn$}\\
\ \ \ 0&\quad\text{otherwise}
\end{cases}
\end{split}
\]

\section{Irreducible zero weight spaces}\label{sec:irredzero}

It was shown already in \cite{gutkin} and \cite{kostant:eta} that every irrep   of the symmetric group $S_n$ appears as $V^T$ for some irrep of $\SL_n$. 
Since, from \cite{kostant:eta} we have
$\tr(\cox^G, V_\mu)\in\{-1,0,+1\}$, it follows that the trace of an $n$-cycle on any irrep of $S_n$ also lies in $\{-1,0,+1\}$.

When they first met, Kostant asked Lusztig if he could prove this last fact for $S_n$ directly. 
Lusztig observed that since an $n$-cycle generates its own centralizer in $S_n$, one need only find $n$ irreps of $S_n$ for which $\tr(\cox)\neq 0$. The exterior powers of the reflection representation fit the bill. 

In any irreducible Weyl group $W$ with Coxeter number $h$, it is still true that a Coxeter element generates its own centralizer and one can again find $h$ irreps (no longer exterior powers of the reflection representation) of $W$ on which $\cox$ has nonzero trace. Thus for any simple $G$ we have:
\begin{itemize}
\item[(1)] The trace of $\cox^G$ on any irrep of $G$ lies in $\{-1,0,+1\}$;
\item[(2)] The trace of $\cox$ on any irrep of $W$ lies in $\{-1,0,+1\}$.
\end{itemize}
One may ask if $(1)\Rightarrow (2)$ as it did for the symmetric group. That is, does every irrep of $W$ appear as $V^T$ for some irrep of $G$? We have seen the answer is negative, for  $G=\Sp_4$ and $G_2$. In this section we will see it is also negative for $D_4, F_4$ and $E_6$.

So the question, raised in \cite{humphreys:zerowt}, becomes: which irreps of $W$ appear as $V^T$ for some irrep $V$ of $G$? 

Theorem \ref{thm:ellreg} leads to a method for answering this question in the case that  $-1\in W$. 
This means $w_0=-1$ acts by a scalar $\pm1 $ on any irreducible representation of $W$. Since $w_0$ is an elliptic involution, Thm. \ref{thm:ellreg} implies that $V_\mu^T$ can only be irreducible if $\mu+2Q$ belongs the the $W$-orbit of $\rho+2Q$ in $P/2Q$, in which case there is $v\in W$ such that $v^{-1}\check R_2^+=\check R_\mu^+\subset \check R^+$ and 
\begin{equation}\label{ellipticinvolution}
\dim V_\mu^T=\vep(v)\prod_{\check\al\in\check R_\mu^+}\frac{\la \mu,\check\al\ra}{\la \rho,\check\al\ra}.
\end{equation}
Since $\mu\in P_{++}$,  each factor $\la\mu,\check\al\ra$ is strictly positive and increases when we add a fundamental weight to $\mu$. Hence each coset in $P/2Q$ contains only finitely many $\mu$'s for which the product in \eqref{ellipticinvolution} is the dimension of an irreducible representation of $W$. Thus one arrives at a finite list $M$ of possible $\mu's$ for which  $V_\mu^T$ can be irreducible. Computation of other character values for $\mu\in M$ can show that $V_\mu^T$ is also reducible for certain $\mu\in M$. 

For $\Spin_8$ and $F_4$ this eliminates all but the known irreducible $V_\mu^T$ as we will see. 
For $E_6$ we use the elliptic triality to do the same, up to two possible exceptions. 

\subsection{$\Spin_8$} \label{D4}
We label the Dynkin diagram of type $D_4$ as  
\[\begin{matrix}
1&2&3\\
&4&
\end{matrix}
\]
and a weight $\mu=a\om_1+b\om_2+c\om_3+d\om_4\in P$ will be written as 
\[\mu=\D{a}{b}{c}{d}.\]
We have $\rho\in Q$. It follows that the $W$-orbit of $\rho$ in $P/2Q$ lies in $Q/2Q$. Now $\mu\in Q$ if and only if $a\equiv c\equiv d\mod 2$.
We will show that the only $\mu\in Q\cap P_{++}$ with $V_\mu^T$ irreducible are the known ones (cf. \cite{reeder:zero1}):
\begin{equation}\label{D4known}
\mu=\D{1}{1}{1}{1}\qquad
\D{3}{1}{1}{1}\qquad
\D{1}{1}{3}{1}\qquad
\D{1}{1}{1}{3}\qquad
\D{1}{2}{1}{1}.
\end{equation}
These $V_\mu$ are the trivial, spherical harmonics of degree two on the three eight dimensional orthogonal representations of $G$,
and the adjoint representation, respectively.

The set 
\[\check R_\mu^+=\{\check\al\in \check R^+:\ \la \mu,\check\al\ra\in 2\bz\}
\]
depends only on the coset 
\[\mu+2P=:\left(\D{a}{b}{c}{d}\right)_{\!2P}\in P/2P.\]
The $W$-orbit of $\rho+2P$ in $P/2P$ consists of the nonzero
cosets
\begin{equation}\label{mod2P}
\left(\D{1}{1}{1}{1}\right)_{\!2P},\qquad 
\left(\D{1}{0}{1}{1}\right)_{\!2P},\qquad 
\left(\D{0}{1}{0}{0}\right)_{\!2P}.
\end{equation}
The sign character $\vep$ is trivial on the stabilizer in $W$ of each the cosets \eqref{mod2P}. It follows that the sign of $\tr(w_0,V_\mu^T)$ is constant on the fiber in $P/2Q$ above each of these cosets. 

The values of $\tr(w_0,V_\mu^T)$ are shown in the table below
\[
\begin{array} {c| c}
\mu+2P& 2^{5}\tr(w_0, V_\mu^T)\\
\hline
&\\
\left(\D{1}{1}{1}{1}\right)_{\!2P}&(a+b)(b+c)(b+d)(a+b+c+d)\\
&\\
\hline
&\\
\left(\D{1}{0}{1}{1}\right)_{\!2P}&-b(a+b+c)(a+b+d)(b+c+d)\\
&\\
\hline
&\\
\left(\D{0}{1}{0}{0}\right)_{\!2P}&-acd(a+2b+c+d)
\end{array}
\]
In each of these cosets, we find that besides the $\mu$ listed in \eqref{D4known}, there is only one other $\mu\in P_{++}$ (up to diagram symmetry) for which
\[\tr(w_0,V_\mu^T)\in\{\chi(w_0):\ \chi\in\Irr(W)\}=\{1,2,3,\pm 4,6,\pm 8\}\]
and also $|\tr(w_0, V_\mu^T)|=\dim V_\mu^T$, namely 
\[\mu=
\D{5}{1}{1}{1}.
\]
For this $\mu$ we have $\tr(w_0,V_\mu^T)=\dim(V_\mu^T)=6$. However,  $V_\mu$ is the degree-four spherical harmonics, whose zero weight space can be written down explicitly and seen to be reducible for $W$.

\subsection{$F_4$} \label{F4}
For the group $G$ of type $F_4$,  we  use our formula for $\tr(w_0,V_\mu^T)$ to determine all  irreps $V_\mu$ for which $V_\mu^T$ is irreducible for $W$. We show this occurs only for the known cases $\dim V_\mu=1,\ 26,\ 52$. 
 
We label the Dynkin diagram for $F_4$ as 
\[1\ 2\ \Rightarrow\ 3\ 4.\]
We have $P=Q$ and
%$\rho=8\al_1+15\al_2+21\al_3+11\al_4$. 
the quotient $X:=P/2Q=P/2P$ is an $\bF_2$-vector space with basis $\bar\om_i:=\om_i+2P$, $i=1,\dots,4$. We write  $\mu=(a,b,c,d)$ for 
$\mu=a\om_1+b\om_2+c\om_3+d\om_4\in P$ and 
$(a,b,c,d)_2$ for the coset $\mu+2P\in X$. 

We have 
\[2P\subset Q_\ell\subset P,\]
where $Q_\ell$ is the lattice spanned by the long roots (note $Q_\ell$ was called $Q$ in section \ref{D4}). 
The $W$-action on $X$ preserves the  subspace $Y:=Q_\ell/2P$ 
and gives an isomorphism 
\[W/\{\pm 1\}\overset{\sim}\lra \GL(X,Y)=\{g\in \GL(X):\ gY=Y\}.\]
The latter group has three orbits in $X$, of sizes $1,3,12$, the latter being $X\setminus Y=W(\rho+2P)$. 
It follows that if $\mu=(a,b,c,d)$ then $\tr(w_0,V_\mu^T)\neq 0$ if and only if $c$ and $d$ are not both even. In this case there is $v\in W$ such that $\mu\in v\rho+2P$ and we have 
\[\tr(w_0,V_\mu^T)=\vep(v)\cdot 
\prod_{\check\al\in\check R_\mu^+}
\frac{\la \mu,\check\al\ra}{\la \rho,\check\al\ra}=
\frac{\vep(v)}{2^{15}\cdot 3^2\cdot 5}\cdot \prod_{\check\al\in v\check R_\mu^+}\la \mu,\check\al\ra.
\]
where we choose $v$ so that $v\check R_2^+=R_\mu^+$.
We list all positive coroots $\check \al_1,\dots, \check \al_{24}$ as linear forms $A_i(\mu)=\la\mu,\check \al_i\ra $, 
as follows. 
\[\begin{array}{l | l | l | l | }
A_1=a& A_2=b&A_3=c& A_4=d\\
A_5=a+b&A_6=b+c& A_7=c+d&\\
A_8=a+b+c&A_9=2b+c& A_{10}=b+c+d&\\
A_{11}=2b+c+d& A_{12}=a+b+c+d&A_{13}=a+2b+c&\\
A_{14}=2b+2c+d&A_{15}=a+2b+c+d&A_{16}=2a+2b+c&\\
A_{17}=a+2b+2c+d& A_{18}=2a+2b+c+d&&\\
A_{19}=a+3b+2c+d& A_{20}=2a+2b+2c+d&&\\
A_{21}=2a+3b+2c+d& &&\\
A_{22}=2a+4b+2c+d&&&\\
A_{23}=2a+4b+3c+d&&&\\
 A_{24}=2a+4b+3c+2d&&&
\end{array}
\]
We have
\[\tr(w_0,V_\mu^T)=\frac{\vep(v)}{2^{15}\cdot 3^2\cdot 5}A_{i_{1}}A_{i_2}\cdots A_{i_{10}},
\]
where 
$v\check R_2^+=\{A_{i_1},\ A_{i_2},\ \cdots A_{i_{10}}\}$.

These are shown in the table below, where the cosets in $X\setminus Y$ are labelled by $(a,b,c,d)_2$, with  $(c,d)_2\neq(0,0)_2$.
\[\def\arraystretch{1.5}
\begin{array} {c| l| l}
\mu\equiv(a,b,c,d)_2&v& 2^{15}\cdot 3^2\cdot 5\cdot\tr(w_0, V_\mu^T)\\
\hline
(1111)_2&1_W&A_{5}A_{6}A_{7}A_{11}A_{12}A_{13}A_{17}A_{18} A_{21}A_{23}\\
\hline
(1011)_2&r_1&-A_{2}A_{7}A_{8}A_{10}A_{11}A_{13}A_{17}A_{18}A_{19}A_{23}\\
(0111)_2&r_2&-A_{1}A_{6}A_{7}A_{8}A_{11}A_{15}A_{18}A_{19}A_{21}A_{23}\\
(1010)_2&r_3&-A_{2}A_{4}A_{8}A_{12}A_{13}A_{14}A_{15}A_{20}A_{21}A_{22}\\
(1101)_2&r_4&-A_{3}A_{5}A_{8}A_{9}A_{10}A_{15}A_{16}A_{17}A_{21}A_{24}
\\
\hline
(1110)_2&r_1r_3&A_{4}A_{5}A_{6}A_{10}A_{13}A_{14}A_{15}A_{19}A_{20}A_{22}\\
(1001)_2&r_1r_4&A_{2}A_{3}A_{6}A_{9}A_{12}A_{15}A_{16}A_{17}A_{19}A_{24}\\
(0010)_2&r_3r_2&A_{1}A_{2}A_{4}A_{5}A_{14}A_{17}A_{19}A_{20}A_{21}A_{22}\\
(0101)_2&r_2r_4&A_{1}A_{3}A_{9}A_{10}A_{12}A_{13}A_{16}A_{19}A_{21}A_{24}\\
\hline
(0110)_2&r_2r_1r_3&-A_{1}A_{4}A_{6}A_{8}A_{10}A_{12}A_{14}A_{17}A_{20}A_{22}\\
\hline
(0011)_2&r_3r_2r_1r_3&A_{1}A_{2}A_{5}A_{7}A_{10}A_{11}A_{12}A_{15}A_{18}A_{23}\\
\hline
(0001)_2&r_4r_3r_2r_1r_3&-A_{1}A_{2}A_{3}A_{5}A_{6}A_{8}A_{9}A_{13}A_{16}A_{24}\\
\end{array}
\]
In each of the above cosets we next find all $\mu$ for which 
\[\tr(w_0,V_\mu^T)\in\{\chi(w_0):\ \chi\in\Irr(W)\}= \{1,2,\pm 4,6,-8,9,12,-16\}.\]
In fact from all of the the twelve cosets there are only five such $\mu$. In these cases $V_\mu$ is small enough to compute $\dim V_\mu^T$, as shown in the next table.
\[\begin{array}{c| c| c}
\mu&\tr(w_0,V_\mu^T)&\dim V_\mu^T\\
\hline
(1,1,1,1)& 1 & 1\\
(2,1,1,1)& -4 & 4\\
(1,1,1,2) & 2 & 2\\
(2,2,1,1) & 4 & 228\\
(1,1,1,2) & 12 & 12
\end{array}
\]
The first three cases, where $\dim V_\mu=1,\ 52,\ 26$ are known to have irreducible zero weight spaces.
In the case  $\mu=(2,2,1,1)$, $V_\mu^T$ is clearly reducible. 
In the last case $\mu=(1,1,1,2)$ we note that $W$ has a unique 12-dimensional irreducible character $\chi_{12}$, and $\chi_{12}(\cox)=1$. On the other hand, the co-root 
$\check\be=2\check\al_1+4\check\al_2+3\check\al_3+\check\al_4$ has $\la \mu,\check\be\ra=12$, which means that $\tr(\cox,V_\mu^T)=0$, by Kostant \cite{kostant:eta} (or Prop. \ref{elltrace} above).
It follows that $V_{(2,2,1,1)}^T$ is reducible. This completes the determination of all $\mu$ for which $V_\mu^T$ is irreducible, for the case $G=F_4$.

\subsection{$E_6$}\label{E6} In this section we compute $\tr(w,V_\mu^T)$ where $w=\cox^4$ is the elliptic regular element of order three. We will see that this almost determines the irreducible zero weight spaces for $E_6$.

Since $\rho\in Q$, all  $\mu\in P_{++}$ for which $V_\mu^T\neq 0$ belong to $Q$. Hence we consider the action of $W$ on $Q/3Q$. Let $q$ be the $W$-invariant quadratic form $q:Q\to\bz$ such that $q(\al)=2$ for every $\al\in R$.

Passing to $Q/3Q$ gives a quadratic form $q_3:Q/3Q\to\bF_3$ with radical $3P/3Q$ and nondegenerate quotient $Q/3P$. 
This gives an isomorphism $\vp:W\times\{-I\}\overset{sim}\to O(Q/3P)$. 

One computes $q(\rho)=78$, so $\rho+3P$ is isotropic. 
Since $w_0\rho=-\rho$, it follows that $W$ is transitive on the set 
of nonzero isotropic vectors in $Q/3P$. Arguing as in the last step of the proof of Prop. \ref{ellregtrace}, it follows that $W$ is also transitive on the nonzero isotropic vectors in $Q/3Q$.

From this discussion and Theorem \ref{mainthm} we obtain
\begin{prop}\label{E61} For $w=\cox^4$ we have
 $\tr(w, V_\mu^T)\neq 0$ if and only if $q_3(\mu+3Q)=0$. 
In this case there exists $v\in W$ for which $v\mu\in \rho+3Q$
and we have 
\begin{equation}\label{E6triality}
\tr(w, V_\mu^T)=\frac{\vep(v)}{3^5\cdot 6^3\cdot 9}\prod_{\check\al\in \check R_\mu^+}
\la \mu,\check\al\ra.
\end{equation}
\end{prop}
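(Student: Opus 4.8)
The plan is to read everything off from Theorem \ref{thm:ellreg} applied with $m=3$, since $w=\cox^4$ is elliptic and regular of order $3$ (on $\ft$ its eigenvalues are the two primitive cube roots of unity, each of multiplicity three, so it fixes no nonzero vector). First I would note that because $\rho\in Q$, every $\mu\in P_{++}$ with $V_\mu^T\neq 0$ satisfies $\mu-\rho\in Q$, hence $\mu\in Q$, so the relevant action is the one of $W$ on $Q/3Q$. Theorem \ref{thm:ellreg} then reduces the Proposition to two tasks: identifying the orbit $W(\rho+3Q)$, and evaluating the product in \eqref{eq:ellreg2}. On that orbit \eqref{eq:ellreg2} reads
\[
\tr(w,V_\mu^T)=\vep(v)\prod_{\check\al\in\check R_\mu^+}\frac{\la\mu,\check\al\ra}{\la\rho,\check\al\ra},
\qquad \check R_\mu^+=\{\check\al\in\check R^+:\ \la\mu,\check\al\ra\in 3\bz\},
\]
with $v\mu\in\rho+3Q$, and off the orbit it vanishes.

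Next I would match the orbit with the isotropy locus of $q_3$. Necessity is immediate: $q_3$ is $W$-invariant and $q(\rho)=78\equiv0\pmod 3$, so $q_3$ vanishes on all of $W(\rho+3Q)$, whence $\tr(w,V_\mu^T)\neq0$ forces $q_3(\mu+3Q)=0$. For the converse I would invoke the transitivity established just before the Proposition: the isomorphism $\vp\colon W\times\{-I\}\xrightarrow{\sim}O(Q/3P)$ together with $w_0\rho=-\rho$ makes $W$ alone transitive on the nonzero isotropic vectors of the nondegenerate space $Q/3P$, and the fibre argument from the last step of the proof of Proposition \ref{ellregtrace} (the stabilizer acts transitively on the fibre of $Q/3Q\to Q/3P$, via Lemma \ref{transitive} applied to the simply connected dual) lifts this to $Q/3Q$. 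Thus, as soon as $\mu+3P$ is a nonzero isotropic vector, $\mu+3Q\in W(\rho+3Q)$ and \eqref{eq:ellreg2} applies; the resulting product is nonzero since each $\la\mu,\check\al\ra$ is a positive integer.

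To pin down the constant I would use that $E_6$ is simply laced, so $\check R=R$ and $\la\rho,\check\al\ra$ equals the height of $\al$. Choosing $v$ as in the remark after Theorem \ref{thm:ellreg}, so that $v^{-1}$ carries the canonical set $\{\check\al:\ 3\mid\la\rho,\check\al\ra\}$ bijectively onto $\check R_\mu^+\subset\check R^+$, turns the denominator into the product of heights over the nine positive roots whose height is divisible by $3$. Reading off the height distribution of $E_6^+$ (dictated by the exponents $1,4,5,7,8,11$) there are five roots of height $3$, three of height $6$, and one of height $9$, so $\prod_{\check\al\in\check R_\mu^+}\la\rho,\check\al\ra=3^5\cdot6^3\cdot9$, and \eqref{eq:ellreg2} becomes exactly \eqref{E6triality}.

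The step I expect to require the most care is the radical of $q_3$, namely $3P/3Q\cong P/Q$, on which $W$ acts trivially and every vector is isotropic. A coset $\mu+3Q$ inside this radical ($\mu\in3P$, for instance $\mu=3\rho$) has $q_3(\mu+3Q)=0$ yet is $W$-fixed and so never meets $\rho+3Q$, giving $\tr(w,V_\mu^T)=0$. Since $q_3$ descends to a nondegenerate $\bar q_3$ on $Q/3P$ with $q_3(\mu+3Q)=\bar q_3(\mu+3P)$, the clean equivalence is that $\tr(w,V_\mu^T)\neq0$ if and only if $\mu+3P$ is a \emph{nonzero} isotropic vector of $Q/3P$; I would make sure this ``nonzero'' is in force throughout, so that the radical cases are precisely the ones in which both sides vanish.
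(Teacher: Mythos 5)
Your proof is correct and follows the paper's own route: restrict to $Q/3Q$ using $\rho\in Q$, identify the orbit $W(\rho+3Q)$ with the isotropic locus via $q(\rho)=78$, the isomorphism $W\times\{-I\}\simeq O(Q/3P)$ and the fibre argument from the end of the proof of Prop.~\ref{ellregtrace}, then apply Theorem~\ref{thm:ellreg} with $m=3$; your height count (nine positive roots of height divisible by $3$, namely five of height $3$, three of height $6$, one of height $9$, giving $3^5\cdot 6^3\cdot 9$) correctly supplies the constant, which the paper leaves implicit. Your caveat about the radical is moreover a genuine correction to the statement as literally written: for $\mu\in 3P\cap P_{++}$ (e.g.\ $\mu=3\rho$) one has $q_3(\mu+3Q)=0$, yet $\mu+3Q$ lies in the radical $3P/3Q$, which $W$ fixes pointwise, so $\mu+3Q\notin W(\rho+3Q)$ and $\tr(w,V_\mu^T)=0$ by Theorem~\ref{thm:ellreg}(1); the clean equivalence is, as you say, that $\mu+3P$ be a \emph{nonzero} isotropic vector of $Q/3P$.
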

To compute  $\tr(w,V_\mu^T)$ explicitly, we label the Dynkin diagram as $\E{1}{2}{3}{4}{5}{6}$ and write a weight $\mu=a\om_1+b\om_2+c\om_3+d\om_4+e\om_5+f\om_6\in P$ as 
\[\mu=\E{a}{b}{c}{d}{e}{f}.\]
As linear forms on $P$, we list the coroots as 
\[\begin{array}{l | l | l | }
A_1=a   &B_1=c&C_1=e\\
A_2=b   &B_2=f& C_2=d\\
A_3=a+b&B_3=c+f&C_3=d+e\\
A_4=b+c& B_4=b+c+d  &   C_4=c+d\\
A_5=a+b+c &B_5=b+c+d+f &C_5=c+d+e \\
A_6=b+c+f & B_6=a+b+c+d+e&    C_6=c+d+f \\
A_{7}=a+b+c+d &B_7=b+2c+d+f & C_7=b+c+d+e\\
A_{8}=a+b+c+f & B_8=a+b+c+d+e+f& C_8=c+d+e+f \\
A_{9}=a+b+c+d+f &B_9=a+b+2c+d+e+f &C_9=b+c+d+e+f \\
A_{10}=a+b+2c+d+f &B_{10}=a+2b+2c+2d+e+f &C_{10}=b+2c+d+e+f\\
A_{11}=a+2b+2c+d+f &B_{11}=a+2b+3c+2d+e+f &C_{11}=b+2c+2d+e+f\\
A_{12}=a+2b+2c+d+e+f &B_{12}=a+2b+3c+2d+e+2f &C_{12}=a+b+2c+2d+e+f
\end{array}
\]

Note that $\check R_\mu^+$ depends only on the $\bF_3$-line in $Q/3P$ containing $\mu+3P$.  
Let $X$ be the set of $q_3$-isotropic lines in $Q/3P$.
We have $|X|=40$, but we can reduce the calculation further. 

Let $\vt$ be the involution of $P/3Q$ arising from the nontrivial symmetry of the Dynkin diagram. 
Then $X=Y\sqcup Z$ where $Y=\{y_1,\dots, y_{16}\}$ consists of the $\vt$-fixed lines and $Z=\{z_1,\ \vt z_1,\ \cdots,\ z_{12},\ \vt z_{12}\}$ are the remaining lines. 

The table of $\tr(w, V_\mu^T)$ is structured as follows. 
In each isotropic line in $Q/3P$,  we give a vector $x=\sum c_i\om_i\mod 3P$ where $+, 0, -$ correspond to $c_i=+1, 0, -1\in \bF_3$. Next we give an element $v\in W$ such that $v\check R_3^+\subset \check R^+$ and $x=v\rho+3P$. Finally we give the numerator of 
$\tr(w,V_\mu^T)$ for any $\mu\in x\cap P_{++}$. If $\mu\in -x\cap P_{++}$ then $\tr(w,V_{\mu}^T)$ is the negative of the rightmost column in row $x$ evaluated at $\mu$.  If $\mu\in z_i$ then 
$\tr(w,V_{\vt\mu}^T)$ is obtained from $\tr(w,V_\mu^T)$ by interchanging $A_i\leftrightarrow C_i$. 

\newpage
\[
\begin{array} {c| c| c | c l}
x&\mu+3P=x&v& 2^{3}\cdot 3^{10}\cdot\tr(w, V_\mu^T)\\
\hline
y_1&\E{+}{+}{+}{+}{+}{+}&1_W&A_5A_6A_{10}B_4B_8B_{10}C_{10}C_6C_5\\
\hline
y_2&\E{+}{-}{-}{-}{+}{-}  &r_3 &-A_3A_6A_9B_4B_9B_{11}C_3C_6C_9\\
\hline
y_3&\E{+}{+}{-}{+}{+}{-}&r_6 &-A_4A_8A_{10}B_5B_6B_{10}C_4C_8C_{10}\\
\hline
y_4&\E{+}{+}{0}{+}{+}{+}&r_2r_1&A_7A_8A_{12}B_1B_5B_7C_7C_8C_{12}\\
\hline
y_5&\E{-}{-}{-}{-}{-}{+}. &r_4r_1&A_5A_9A_{12}B_3B_4B_7C_5C_9C_{12}\\
\hline
y_6&\E{-}{-}{+}{-}{-}{+} &r_5r_1&A_4A_8A_{10}B_5B_6B_{10}C_4C_8C_{10}\\
\hline
y_7&\E{+}{0}{+}{0}{+}{+}&r_3r_6&A_2A_8A_9B_6B_7B_{11}C_2C_8C_9\\
\hline
y_8&\E{+}{-}{+}{-}{+}{+}&r_6r_3&A_3A_4A_7B_5B_9B_{12}C_3C_4C_7\\
\hline
y_9&\E{+}{+}{-}{+}{+}{+}&r_1r_2&A_4A_9A_{11}B_3B_6B_9C_4C_9C_{11}\\
\hline
y_{10}&\E{-}{-}{0}{-}{-}{+}&r_2r_4&A_6A_7A_{11}B_1B_8B_9C_6C_7C_{11}\\
\hline
y_{11}&\E{0}{+}{0}{+}{0}{-}&r_4r_2r_3&A_1A_6A_8B_1B_{10}B_{11}C_1C_6C_8\\
\hline
y_{12}&\E{+}{0}{-}{0}{+}{-}&r_6r_3r_6&-A_2A_5A_7B_7B_8B_{12}C_2C_5C_7\\
\hline
y_{13}&\E{-}{+}{+}{+}{-}{0}&r_5r_3r_2&-A_3A_{10}A_{12}B_2B_4B_5C_3C_{10}C_{12}\\
\hline
y_{14}&\E{+}{0}{+}{0}{+}{0}&r_1r_3r_2&-A_2A_{10}A_{11}B_2B_6B_8C_2C_{10}C_{11}\\
\hline
y_{15}&\E{0}{+}{-}{+}{0}{+}&r_6r_4r_2r_3&A_1A_4A_5B_3B_{10}B_{12}C_1C_4C_5\\
\hline
y_{16}&\E{0}{0}{+}{0}{0}{0}&r_3r_6r_4r_2r_3&-A_1A_2A_3B_2B_{11}B_{12}C_1C_2C_3\\
\hline
\hline
z_1&\E{-}{-}{+}{+}{+}{+}&r_1&-A_{4}A_{7}A_{8}B_7B_9B_{10}C_{9}C_{6}{C_5}\\
\hline
z_2&\E{-}{-}{-}{+}{+}{+}&r_2&-A_{5}A_{8}A_{11}B_5B_8C_{4}C_{7}{C_{10}}C_{12}\\
\hline
z_3&\E{-}{0}{-}{-}{+}{-}&r_1r_3&A_2A_7A_8B_5B_8B_{11}C_3C_6C_{10}\\
\hline
z_4&\E{-}{-}{0}{+}{+}{-}&r_2r_6&A_8A_{11}B_1B_4B_6C_6C_9C_{10}C_{12}\\
\hline
z_5&\E{-}{+}{+}{0}{+}{0}&r_3r_2&A_3A_{11}B_2B_7B_9C_2C_7C_9C_{12}\\
\hline
z_6&\E{0}{+}{+}{-}{+}{-}&r_2r_3&A_1A_9A_{12}B_3B_5B_{11}C_3C_4B_8\\
\hline
z_7&\E{-}{-}{0}{+}{+}{+}&r_1r_2r_1&-A_6A_9A_{10}A_{12}B_1B_4B_6C_9C_{11}\\
\hline
z_8&\E{0}{-}{-}{0}{+}{0}&r_2r_3r_2&-A_1A_{10}A_{12}B_2B_7B_{10}C_2C_5C_8\\
\hline
z_9&\E{-}{0}{+}{+}{0}{-}&r_4r_1r_3&-A_2A_5A_6A_9B_3B_8B_{11}C_1C_{11}\\
\hline
z_{10}&\E{0}{+}{0}{-}{+}{+}&r_6r_2r_3&-A_1A_7A_{12}B_1B_4B_{12}C_3C_5C_6\\
\hline
z_{11}&\E{-}{0}{+}{-}{+}{+}&r_6r_1r_3&-A_2A_5A_9B_4B_6B_{12}C_3C_4C_{10}\\
\hline
z_{12}&\E{-}{0}{0}{+}{0}{+}&r_6r_4r_1r_3&A_2A_4A_7A_8B_1B_6B_{12}C_1C_{11}\\
\hline
\end{array}
\]

We turn now to irreducible zero weight spaces for $E_6$.

As an abstract group, we have $W\simeq \SO_5(3)$, via the  twisted mapping $\vep\vp$. The Steinberg representation $81_p$ of $\SO_5(3)$ and its twist $81_p'=\vep\otimes 81_p$ are the only irreducible representations of $W$ whose character vanishes on $w$. We conclude a dichotomy:

\begin{prop}\label{E6irr} If $V_\mu^T$ is irreducible for $W$ 
then exactly one of the following holds. 
\begin{itemize}
\item[\rm (i)] $q_3(\mu)=0$ and $\tr(w,V_\mu^T)\neq 0$.
\item[\rm(ii)] $q_3(\mu)\neq 0$ and $V_\mu^T\simeq 81_p$ or $81_p'$. 
\end{itemize}
\end{prop}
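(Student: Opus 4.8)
The plan is to read the dichotomy directly off Proposition \ref{E61}, combined with the character-theoretic fact recalled just above: that the Steinberg representation $81_p$ and its sign-twist $81_p'=\vep\otimes 81_p$ are the only irreducible characters of $W$ vanishing on the class of $w=\cox^4$. Since (i) requires $q_3(\mu)=0$ and (ii) requires $q_3(\mu)\neq 0$, the two alternatives are automatically mutually exclusive; it then remains only to show that, under the hypothesis that $V_\mu^T$ is irreducible, at least one of them holds, with the split governed by whether $q_3(\mu)$ vanishes.

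First I would handle the case $q_3(\mu)=0$, where $q_3(\mu)$ abbreviates $q_3(\mu+3Q)$. Proposition \ref{E61} states that $\tr(w,V_\mu^T)\neq 0$ precisely when $q_3(\mu+3Q)=0$, so here alternative (i) holds outright; irreducibility of $V_\mu^T$ is not even needed for this case.

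Next I would treat the complementary case $q_3(\mu)\neq 0$. By Proposition \ref{E61} again, this forces $\tr(w,V_\mu^T)=0$, i.e.\ the character of the $W$-module $V_\mu^T$ annihilates the class of $w$. Now invoking the hypothesis that $V_\mu^T$ is irreducible, it is a single irreducible character of $W$ vanishing on $w$, so the classification recalled above forces $V_\mu^T\simeq 81_p$ or $81_p'$, giving alternative (ii). Combining the two cases, and noting they are separated by the mutually exclusive conditions $q_3(\mu)=0$ and $q_3(\mu)\neq 0$, yields exactly one alternative, as claimed.

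The one substantive ingredient beyond Proposition \ref{E61} is the assertion that $81_p$ and $81_p'$ are the \emph{only} irreducible characters of $W$ vanishing on $w$, and this is the step I expect to require the most care. I would establish it from the character table of $W$: using the twisted isomorphism $\vep\vp$ identifying $W$ with $\SO_5(3)$, locate the image of the order-three elliptic regular element $w$, and check against the known table that no irreducible character other than the Steinberg character is annihilated by this class. That $81_p'$ also vanishes on $w$ is then automatic, since $(\vep\otimes 81_p)(w)=\vep(w)\cdot 81_p(w)=0$, so the two exhaust the irreducibles vanishing on $w$ and the dichotomy is exactly as stated.
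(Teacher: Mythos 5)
Your proposal is correct and follows essentially the same route as the paper: the paper likewise reads the dichotomy off Proposition \ref{E61} together with the stated fact that, under the identification $W\simeq \SO_5(3)$ via $\vep\vp$, the Steinberg representation $81_p$ and its twist $81_p'$ are the only irreducible characters of $W$ vanishing on $w$. The paper simply asserts this character-table fact without further detail, exactly as you propose to verify it.
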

I do not know if case (ii) ever occurs, but it seems unlikely. 

Up to duality, there are five known representations $V_\mu$  for which $V_\mu^T$ is irreducible. These are small representations (cf. \cite{reeder:zero1}). 
\begin{equation}\label{E6small}
\begin{array}{c|c|c}
\mu& V_\mu^T& \tr(w,V_\mu^T)\\
\hline
\E{1}{1}{1}{1}{1}{1}&1_p&\ 1\\
\E{1}{1}{1}{1}{1}{2}&6_p &\!\!\!-3\\ 
\E{2}{1}{1}{1}{2}{1}& 20_p & \ 2\\
\E{4}{1}{1}{1}{1}{1}\qquad \E{1}{1}{1}{1}{4}{1}& 24_p & \ 6\\
\E{2}{2}{1}{1}{1}{1}\qquad \E{1}{1}{1}{2}{2}{1}& 64_p &\!\!\!-8\\
\end{array}
\end{equation}
The representation $64_p$ is another Steinberg representation, via the isomorphism $W\simeq O_6^-(2)$. However, because $-1\notin W$, there is no analogue of Prop. \ref{E6irr} for $m=2$.

Using the table of $\tr(w,V_\mu^T)$ and the method used for $D_4$ and $F_4$, we find that \eqref{E6small} is the complete list  of  irreducible zero weight spaces with $\tr(w,V_\mu^T)\neq 0$.  We conclude 
\begin{prop} The only irreducible representations of $W(E_6)$ afforded by a zero weight space of the compact Lie group $E_6$ are $1_p, 6_p, 20_p, 24_p, 64_p$ and possibly 
$81_p, 81_p'$. 
\end{prop}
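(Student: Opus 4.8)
The plan is to invoke Proposition~\ref{E6irr}, which splits the question into two cases. In case (ii) the zero weight space is one of the Steinberg representations $81_p$ or $81_p'$; since $-1\notin W$ there is no elliptic involution available to run the argument on, and we cannot decide whether these actually occur --- this is exactly the source of the ``possibly'' in the statement. All the substantive work is therefore in case (i): I must show that the only $\mu\in P_{++}$ with $q_3(\mu)=0$, $\tr(w,V_\mu^T)\neq 0$, and $V_\mu^T$ irreducible for $W$ are the five weights listed (up to duality) in \eqref{E6small}, affording $1_p,6_p,20_p,24_p,64_p$, where $w=\cox^4$ is the elliptic regular element of order three.

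For case (i) I would exploit the monomial formula \eqref{E6triality} and the explicit table of numerators organized by $q_3$-isotropic line in $Q/3P$. If $V_\mu^T$ is irreducible, then $\tr(w,V_\mu^T)=\chi(w)$ for the single character $\chi=V_\mu^T\in\Irr(W)$; these trace values are rational, hence integers, so in particular
\[
|\tr(w,V_\mu^T)|\ \le\ \max\{\,|\chi(w)|:\ \chi\in\Irr(W)\,\},
\]
a fixed finite bound. Each of the nine linear forms $A_i,B_i,C_i$ occurring in the numerator of \eqref{E6triality} is a positive integer on $P_{++}$ that strictly increases whenever a fundamental weight is added to $\mu$; thus boundedness of the product (after division by $2^3\cdot 3^{10}$) confines $\mu$ to a finite box within each isotropic line. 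Sweeping over the forty isotropic lines --- reduced to the tabulated rows by the diagram involution $\vt$ (interchanging $A_i\leftrightarrow C_i$) and by the antisymmetry $\tr(w,V_{-x}^T)=-\tr(w,V_x^T)$ --- produces a finite candidate list $M$.

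Next, for each $\mu\in M$ I would impose the necessary condition $\tr(w,V_\mu^T)\in\{\chi(w):\chi\in\Irr(W)\}$, which already discards most candidates. For the survivors not appearing in \eqref{E6small}, I would eliminate irreducibility exactly as in sections~\ref{D4} and~\ref{F4}: either $V_\mu$ is small enough that $\dim V_\mu^T$ can be computed directly and fails to equal $\chi(1)$ for every admissible $\chi$, or a \emph{second} character value contradicts the pair $(\chi(1),\chi(w))$ of every candidate irreducible. The cheapest second invariant is $\tr(\cox,V_\mu^T)$: since for the Coxeter class $|R_t^+|=0$ while $\check R^+_{12}=\varnothing$, Proposition~\ref{elltrace}(a) forces $\tr(\cox,V_\mu^T)=0$ as soon as some positive coroot $\check\be$ satisfies $\la\mu,\check\be\ra\in 12\bz$, which is incompatible with any irreducible $\chi$ having $\chi(\cox)\neq 0$. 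Combining the resulting case (i) list with case (ii) yields the stated conclusion.

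The main obstacle is the size of the bounded search rather than any conceptual gap: $E_6$ carries $72$ roots and $25$ irreducible characters, and one must carry the forty isotropic lines, their monomials, and the finite candidate boxes through to completion, then compute a dimension or an auxiliary trace for each surviving weight. The delicate part is disposing of the handful of \emph{near misses} --- weights whose $w$-trace happens to land on an integer character value of $W\simeq\SO_5(3)$ yet whose full zero weight space is reducible --- since agreement of a single character value is necessary but far from sufficient for irreducibility, so each such weight requires an independent reducibility certificate.
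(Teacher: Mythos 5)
Your proposal follows essentially the same route as the paper: the dichotomy of Proposition \ref{E6irr} isolates the $81_p,81_p'$ ambiguity, and case (i) is settled by the monomial formula \eqref{E6triality} together with the tabulated numerators, a finite search over the isotropic lines (bounded because the product of the nine positive linear forms must land in the finite set $\{\chi(w):\chi\in\Irr(W)\}$), and elimination of near misses by dimension counts or the Coxeter-class vanishing criterion --- exactly ``the method used for $D_4$ and $F_4$'' that the paper invokes. One tiny wording slip: an individual form $A_i$ need not strictly increase when an arbitrary fundamental weight is added (e.g.\ $A_1=a$ is unchanged by adding $\om_2$); finiteness instead follows because the nine coroots span, so bounding all nine forms bounds $\mu$.
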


\def\noopsort#1{}
\providecommand{\bysame}{\leavevmode\hbox to3em{\hrulefill}\thinspace}

\end{document}